\newsavebox\myboxA
\newsavebox\myboxB
\newlength\mylenA
\newcommand*\xoverline[2][0.75]{%
    \sbox{\myboxA}{$\m@th#2$}%
    \setbox\myboxB\null
    \ht\myboxB=\ht\myboxA%
    \dp\myboxB=\dp\myboxA%
    \wd\myboxB=#1\wd\myboxA
    \sbox\myboxB{$\m@th\overline{\copy\myboxB}$}
    \setlength\mylenA{\the\wd\myboxA}
    \addtolength\mylenA{-\the\wd\myboxB}%
    \ifdim\wd\myboxB<\wd\myboxA%
       \rlap{\hskip 0.5\mylenA\usebox\myboxB}{\usebox\myboxA}%
    \else
        \hskip -0.5\mylenA\rlap{\usebox\myboxA}{\hskip
0.5\mylenA\usebox\myboxB}%
    \fi}
\newtheorem*{thm*}{Theorem}
\newtheorem{thm}{Theorem}
\newtheorem{prop}[thm]{Proposition}
\newtheorem{defn}[thm]{Definition}
\newtheorem{lem}[thm]{Lemma}
\theoremstyle{remark}
\newtheorem{rmk}{Remark}[section]
\numberwithin{equation}{section}
\numberwithin{thm}{section}
\def\Pf{\operatorname{Pf}}
\def\sgn{\operatorname{sgn}}
\def\det{\operatorname{det}}
\newcommand{\Hreg}[3]{\ensuremath{H_{#1,#2}\setminus(\triangleright_{#3}
\cup\triangleleft_{#3})}}
\newcommand{\Vreg}[3]{\ensuremath{V_{#1,#2}\setminus\triangleright_{#3}} }
\newcommand{\HPreg}[3]{\ensuremath{\xoverline{H}_{#1,#2}
^+\setminus(\triangleright_ { #3 }
\cup\triangleleft_{#3})}}
\newcommand{\HMreg}[3]{\ensuremath{H_{#1,#2}^-\setminus(\triangleright_{#3}
\cup\triangleleft_{#3})}}
\mathchardef\pFcomma=\mathcode`, 
\newcommand*\pFq[5]{%
  \begingroup
  \begingroup\lccode`~=`,
    \lowercase{\endgroup\def~}{\pFcomma\mkern\pFqskip}%
  \mathcode`,=\string"8000
  {}_{#1}F_{#2}\biggl[\genfrac..{0pt}{}{#3}{#4};#5\biggr]%
  \endgroup
}
\begin{document}

  \title{Three Interactions of Holes in Two Dimensional Dimer
systems}
 
 \author[T. Gilmore]{Tomack
Gilmore$^\dagger$}
\address{Fakult\"at f\"ur Mathematik der Universit\"at Wien,\\
Oskar-Morgernstern-Platz 1, 1090 Wien, Austria.}
\email{tomack.gilmore@univie.ac.at}
\thanks{$\dagger$Research
supported by the Austrian Science Foundation (FWF), grant F50-N15, in the
framework of the Special Research Program ``Algorithmic and Enumerative
Combinatorics''.}

   \begin{abstract}
Consider the unit triangular lattice in the plane with origin
$O$, drawn so that one of the sets of lattice lines is vertical. Let $l$ and
$l'$ denote respectively the vertical and horizontal lines that
intersect $O$. Suppose the plane contains a pair of triangular holes of side
length two,
distributed symmetrically with respect to $l$ and $l'$, and oriented so that
both holes point toward $O$. Unit rhombus tilings of three different
regions of the plane are considered, namely: tilings of the entire plane;
tilings of the half plane that lies to the left of $l$ (where $l$ is considered
a free boundary, so unit rhombi are allowed to protrude halfway across it);
and tilings of the half plane that lies just below the fixed boundary $l'$.
Asymptotic expressions for the interactions of the triangular holes in these
three different regions are obtained, providing further evidence for Ciucu's
ongoing program that seeks to draw parallels between gaps in dimer systems on
the hexagonal lattice and certain electrostatic phenomena. 

 \end{abstract}
 \maketitle
\section{Introduction}\label{sec:Intro}
The study of interactions between holes (or gaps) in dimer systems was
established by Fisher and Stephenson~\cite{FishSteph} in 1963 with their results
concerning interactions of holes in dimer systems on the square
lattice. Three specific types of interaction were presented: the interaction
between a pair of dimer holes (that is, the interaction of two fixed dimers
within a
dimer system); the interaction between a pair of non-dimer holes (that is, the
interaction between two fixed monomers in a dimer system); and the interaction
of a dimer hole with a constrained boundary.

Kenyon~\cite{Keny} generalised the first of these results to an arbitrary number
of dimer holes in dimer systems on both the square and hexagonal lattices.
In~\cite{Amoeb} Kenyon, Okounkov and Sheffield further extended this work to
general planar bipartite lattices. The interaction of a monomer with a straight
line boundary in the square lattice is also due to Kenyon~\cite[Section
7.5]{Keny2}, while the interaction of a family of holes with a
constrained straight line boundary on the hexagonal lattice is tackled by
Ciucu in~\cite[Theorem 2.2]{Ciu05}. It is worth noting here that since the
triangular lattice arises as the ``dual'' of the hexagonal lattice, dimer
coverings of subgraphs of the hexagonal lattice are often much more easily
thought of in terms of unit rhombus
tilings of sub-regions of the triangular lattice in the plane (so monomers
correspond to
unit triangles and dimers correspond to unit rhombi). A dimer covering of the
entire hexagonal lattice then corresponds to a tiling of the
entire plane by unit rhombi (sometimes referred to as a sea of unit rhombi).

Ciucu has also studied various types of interactions between
non-dimer holes in dimer systems on the hexagonal lattice
(equivalently, interactions between
triangular
holes within a
sea of unit
rhombi~\cite{Ciu01}\cite{MihaiPN}\cite{Ciu05}\cite{Ciu02}\cite{Ciu09}),
thereby
establishing close analogies to certain electrostatic phenomena.
Ciucu conjectures that interactions between holes in dimer
systems on the hexagonal lattice are governed by the laws of two dimensional
electrostatics (that is, Coulomb's laws). More specifically, the conjecture
states that by taking the exponential of the negative of the electrostatic
energy of the two dimensional system of physical charges obtained by considering
each hole as a point charge of size and magnitude specified by a
statistic\footnote{For a triangular hole $t$ on the unit
triangular lattice, $q(t)$ is defined to be the signed difference between the
number of left and right-pointing unit triangles that comprise $t$.} $q$,
one may recover (up to a multiplicative constant) asymptotic expressions for the
dimer-mediated interactions of holes on the hexagonal lattice. This conjecture
remains wide open, though it has been shown to hold for two fairly general
families of holes.

In light of this, further types of interaction have been studied. Recently
in~\cite{KrattFree}, Ciucu and Krattenthaler determined the interaction between
a left-pointing triangular hole of side length two and a vertical free boundary
that borders a sea of unit rhombi on the left half of the plane. It would appear
that this result (which inspired the present paper) is the
first treatment of such an interaction in the literature.
Theorem~\ref{thm:VertCor} below is a direct analogy of the interaction presented
in~\cite{KrattFree} in the sense that here the hole has
been ``flipped'' and instead points toward the free boundary. Curiously,
Theorem~\ref{thm:VertCor} agrees with that of~\cite{KrattFree}
only up to a multiplicative constant.

As well as Theorem~\ref{thm:VertCor}, this paper establishes two other types of
interaction between a pair of triangular holes of side length two that point
towards each other in
dimer systems (that is, a pair of triangles oriented so that one left-pointing
triangle
lies directly to the right of one right-pointing triangle). These
interactions are: the interaction between such a pair of holes that
lie within a sea of unit rhombi (Theorem~\ref{thm:CompCorr}); and the
interaction
between such a pair of holes that lie on a fixed boundary of a sea of
unit rhombi (Theorem~\ref{thm:HorzCor}). All three of these results agree with
the original conjecture of Ciucu and so serve as yet more evidence that
interactions between holes in dimer systems are governed by the laws of two
dimensional electrostatics. A brief outline of the paper follows.

Section~\ref{sec:SetUp} discusses the
correspondence between
dimer systems on the hexagonal lattice and rhombus tilings of hexagons on the
triangular lattice, and also
states precisely the family of holey hexagons (that is, hexagons containing
triangular holes) that are of particular interest. Certain symmetry classes of
rhombus
tilings of a holey hexagon correspond precisely to rhombus tilings of
smaller sub-regions of the same hexagon. The three different types of
interaction that are the focus of this
work arise from considering tilings of these
sub-regions as they become infinitely large. These sub-regions
are defined in Section~\ref{sec:SetUp}, along with the corresponding
interactions (or correlation functions). Asymptotic expressions for
these interactions are also stated here (Theorem~\ref{thm:CompCorr},
Theorem~\ref{thm:VertCor},
and Theorem~\ref{thm:HorzCor}), which together comprise the main
results. Proposition~\ref{thm:MatchFac}-- a factorisation result that is
frequently used throughout this paper-- is also stated in this section.

Section~\ref{sec:EnumForm} briefly discusses the relationship between rhombus
tilings of hexagons and plane partitions. This is followed by four exact
enumeration formulas that count tilings of sub-regions of holey
hexagons (Theorem~\ref{thm:VertExact},
Theorem~\ref{thm:HorzExact}, Theorem~\ref{thm:HorzWExact}, and
Theorem~\ref{thm:HExact}). Proofs of these theorems follow in
Sections~\ref{sec:NonInt}
and~\ref{sec:eval}.

In Section~\ref{sec:NonInt} the
well-known bijection between rhombus tilings and non-intersecting lattice paths
is combined with existing results due to Stembridge~\cite{Stemb} and
Lindstr\"{o}m-Gessel-Viennot~\cite{LGV} in order to express the number of
tilings of different regions as either a Pfaffian
(Proposition~\ref{prop:VertMat}) or a determinant
(Proposition~\ref{prop:HorzMat}, Proposition~\ref{prop:HorzWMat}). In
Section~\ref{sec:eval} the matrices defined in Proposition~\ref{prop:VertMat}
are manipulated using standard row and column operations in such a way that
the Pfaffian of each matrix may be expressed as a determinant of a much smaller
matrix. The unique $LU$-decompositions of these smaller matrices, along with
those from Proposition~\ref{prop:HorzMat} and Proposition~\ref{prop:HorzWMat},
are then stated
and proved (Theorem~\ref{thm:VertLU},
Lemma~\ref{thm:HorzWLU}, Theorem~\ref{thm:VertOddLU}, and
Theorem~\ref{thm:HorzLU}), from which the exact
enumeration formulas in Section~\ref{sec:EnumForm} immediately follow.

Section~\ref{sec:Asym} contains proofs of the main results. These follow from
inserting the enumeration formulas from Section~\ref{sec:EnumForm} into the
correlation functions defined in Section~\ref{sec:SetUp}. Asymptotic
expressions for the three types of interaction are then derived by studying
these correlation functions as the distance between the holes (or the distance
between the hole and the free boundary) becomes very large.

\section{Set-Up and Results}\label{sec:SetUp}

Consider the hexagonal lattice $\mathscr{H}$ in terms of its dual
$\mathscr{T}$ (that is, the planar triangular lattice consisting of unit
equilateral triangles drawn so that one of the sets of lattice lines is
vertical). Then
monomers on $\mathscr{H}$ correspond to unit triangles on $\mathscr{T}$ and
dimers on $\mathscr{H}$ correspond to unit rhombi on $\mathscr{T}$ (a dimer on
$\mathscr{H}$ is equivalent to joining two unit triangles that share
exactly one edge on $\mathscr{T}$). It follows that dimer coverings of regions
of
$\mathscr{H}$ that contain gaps correspond exactly to rhombus tilings of
regions of $\mathscr{T}$ that contain triangular holes.

Fix some origin $O$ in $\mathscr{T}$ and denote by $H_{a,b}$ the hexagonal
region centred at $O$ with side lengths $a,\,b,\,a,\,a,\,b,\,a$ (going clockwise
from the southwest edge). Suppose $T$ is a set of fixed unit
triangles contained within the interior of
$H_{a,b}$, where the distances between the triangles are parametrised by
$k$. Denote by $H_{a,b}\setminus T$ the hexagonal region $H_{a,b}$ with the set
of triangles $T$ removed from its interior (such regions will often be
referred to as \emph{holey hexagons}, a term coined by Propp in~\cite{Propp}).
Suppose further that $\xi>0$
is real such that $b\thicksim\xi a$ is integral. Then the \emph{interaction}
(otherwise known as the \emph{correlation function}, or simply
the \emph{correlation}) between the holes in $T$ is defined to be
$$\omega(k;\xi)=\lim_{a\to\infty}\frac{M(H_{a,b}\setminus
T)}{M(H_{a,b})},$$
where $M(R)$ denotes the number of rhombus tilings of the region $R$. Different
types of interactions may be obtained by replacing $H_{a,b}\setminus T$ and
$H_{a,b}$ with specific sub-regions of
$H_{a,b}\setminus T$ (and
$H_{a,b}$ respectively) in the above formula. Note that in order for a rhombus
tiling of $H_{a,b}$ to exist (and hence for the above definition to make
sense), it must be the case that $\sum_{t\in T}q(t)=0$, where $q(t)$ is the
statistic defined in Section~\ref{sec:Intro}. The triangular holes and
sub-regions of particular interest are defined below.

For some non-negative integer $k$, let $\triangleright_k$ denote the
right-pointing equilateral triangle of side length two at lattice distance $k$
to
the left of the origin such that the centre of its vertical side lies on the
horizontal symmetry axis of $H_{a,b}$. Define $\triangleleft_k$ analogously,
that is, $\triangleleft_k$ denotes the left-pointing triangle of side length two
with its vertical edge
at lattice distance $k$ to the
right of the origin. Then $\Hreg{a}{b}{k}$
denotes the hexagonal region $H_{a,b}$ from which two inward pointing
triangular
holes of side length two have been removed that are symmetrically distributed
with
respect to the
vertical and horizontal symmetry axes of $H_{a,b}$. Figure~\ref{fig:HexNoDim}
(left) shows
such a
region for $a=7,\,b=5$ and $k=4$.
\begin{rmk}\label{rmk:Parity}
Under this construction the parity of $a$ and $b$ determine the parity
of $k$, that is, if $a$ and $b$ are of the same parity then $k$ is even,
otherwise $k$ is odd.
\end{rmk}
\begin{figure}[t]
\centering
\includegraphics[scale=0.45]{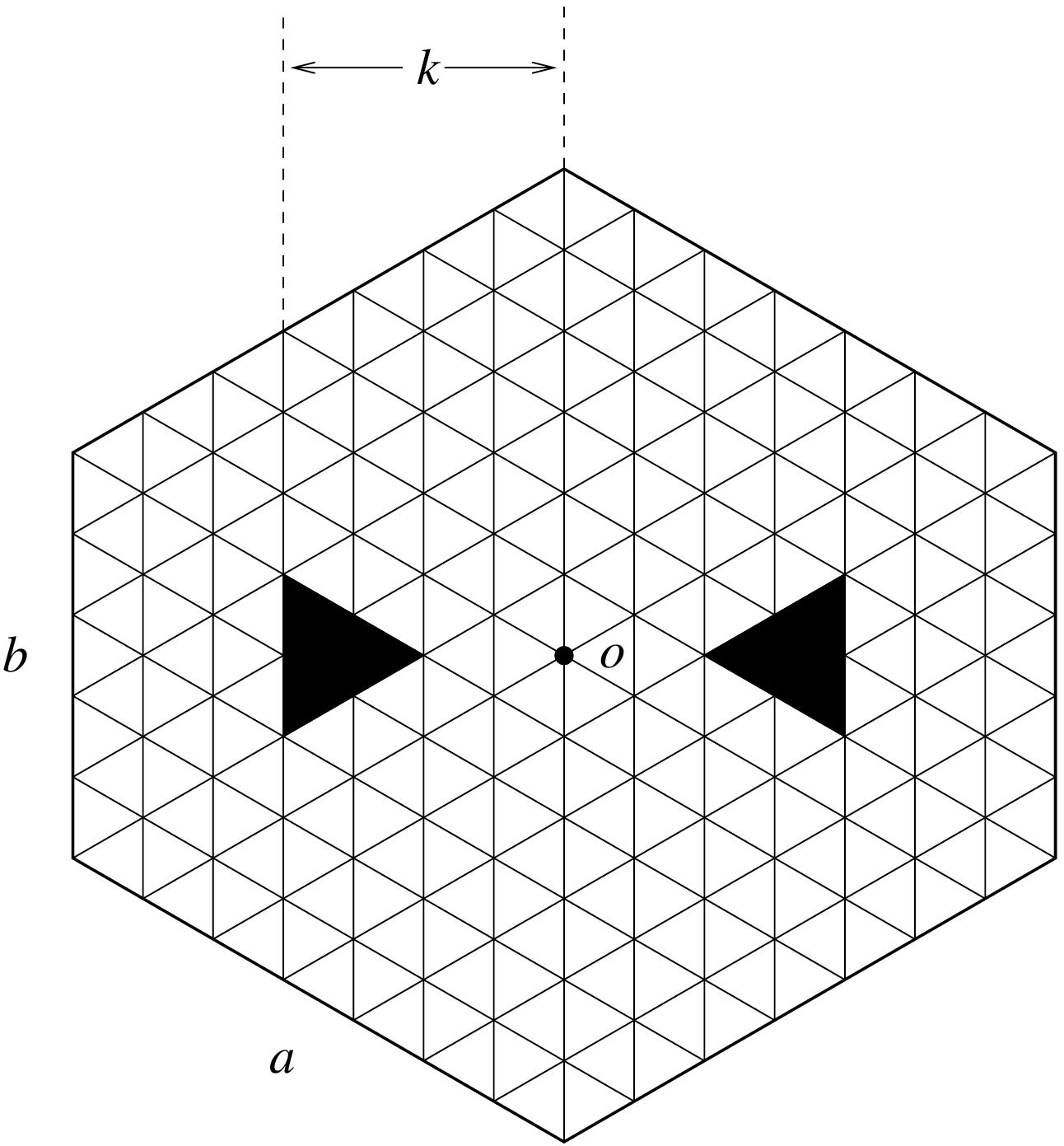}\qquad \includegraphics[scale=0.45]{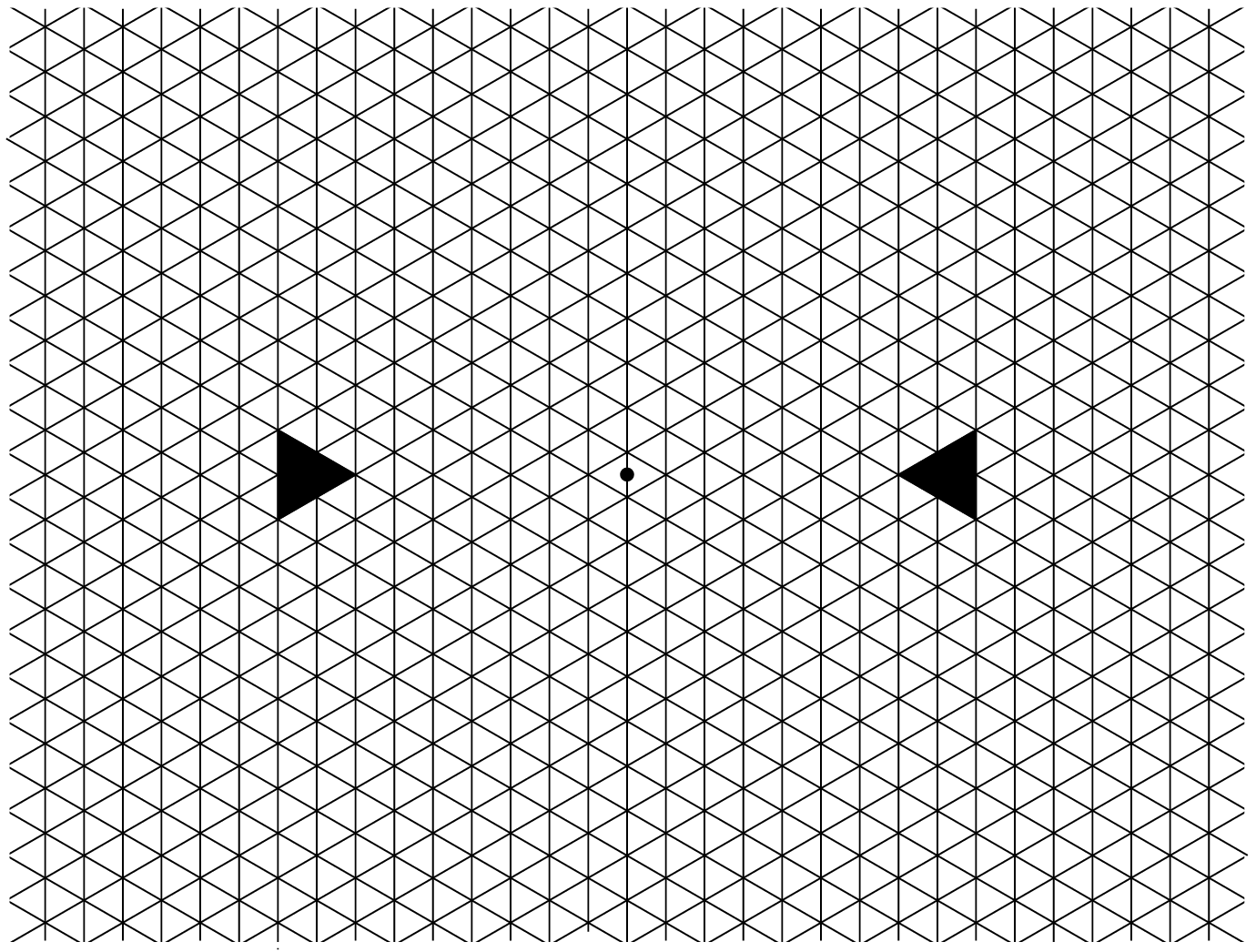}
\caption {Left: the holey hexagon
\Hreg{7}{5}{4}. Right: a small section of the plane of unit triangles given by
$\Hreg{a}{b}{9}$ as $a$ tends to infinity.}
\label{fig:HexNoDim}
\end{figure}
Consider $\Hreg{a}{b}{k}$ as $a$ is sent to infinity. It should be clear that
this gives the entire plane of unit triangles
containing two triangular holes with their
vertical sides at lattice distance $2k$ apart, a portion of which may be seen in
Figure~\ref{fig:HexNoDim} (right). Hence the correlation function
$$\omega_{H}(k;\xi)=\lim_{a\to\infty}\frac{M(\Hreg{a}{b}{k})}{M(H_{a,b})}$$
is the interaction (in terms of $k$) between a pair of triangular holes of side
length 2
oriented so that they point directly at each other within a sea of unit
rhombi.
\begin{thm}\label{thm:CompCorr}
As $k$ becomes very large the interaction defined above, $\omega_H(k;\xi)$, is
asymptotically
$$\omega_{H}(k;\xi)\thicksim\left(\frac{1}{
2k\pi } \left(\frac{\xi+1}{2}\right)^{2k-2}\right)^2.$$
\end{thm}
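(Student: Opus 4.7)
The plan is to combine the exact enumeration formula of Theorem~\ref{thm:HExact} for $M(\Hreg{a}{b}{k})$ with MacMahon's classical box formula for $M(H_{a,b})$, and then take the two limits $a\to\infty$ and $k\to\infty$ in succession.

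First, substituting both product formulas into the definition of $\omega_H(k;\xi)$ expresses the ratio as a finite product of gamma-function (equivalently factorial) ratios. The ``bulk'' factors indexed by the hexagon parameters $a$ and $b\sim\xi a$ should cancel, with the uncancelled residue supported on a finite window of indices near $k$. Applying the standard asymptotic $\Gamma(a+\alpha)/\Gamma(a+\beta)\sim a^{\alpha-\beta}$ factor by factor, and using $b/a\to\xi$ to track the $\xi$-dependence, one obtains after the limit $a\to\infty$ a closed-form expression $P(k;\xi)$ for $\omega_H(k;\xi)$ as a finite product of gamma ratios in the variables $k$ and $\xi$ only.

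Once $\omega_H(k;\xi)=P(k;\xi)$ is in hand, I would attack its large-$k$ behaviour by Stirling's approximation. The surviving $\sqrt{2\pi k}$ denominators combine to produce the polynomial prefactor $(2k\pi)^{-2}$, while the main $(k/e)^k$ contributions, paired with $\xi$-dependent gamma factors, yield the exponential term $((\xi+1)/2)^{4k-4}$. That the final answer comes out as a perfect square reflects the reflective symmetry of the holes about both the vertical and horizontal axes of $H_{a,b}$, consistent with the factorisation philosophy embodied in Proposition~\ref{thm:MatchFac}; one may even anticipate the square from the heuristic that each hole contributes one factor of $\frac{1}{\sqrt{2k\pi}}\bigl((\xi+1)/2\bigr)^{2k-2}$.

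The chief obstacle is the bookkeeping in the second step: justifying termwise passage to the limit $a\to\infty$ (via Tannery's theorem or a dominated-convergence argument applied to the finite product representation), and then organising the Stirling expansion so that polynomial, exponential, and constant pieces separate cleanly into the stated form. A secondary nuisance is the parity constraint of Remark~\ref{rmk:Parity}: the limit $a\to\infty$ must be taken along a sequence of admissible parities, but since the enumeration formulas used are uniform in $a$ this should not alter the limiting product. Once these are dispatched, the remaining asymptotic analysis is routine.
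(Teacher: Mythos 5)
There is a genuine gap, and it lies at the very first step. Your plan assumes that substituting Theorem~\ref{thm:HExact} and MacMahon's box formula into $\omega_H(k;\xi)$ leaves ``a finite product of gamma-function ratios'' after the bulk factors cancel. It does not. The product part $T(n,2m,n)$ of Theorem~\ref{thm:HExact} indeed cancels against $M(H_{n,2m})$, but what survives is
$\bigl[\sum_{s=1}^{m}B'_{n,k}(s)D'_{n,k}(s)\bigr]\cdot\bigl[\sum_{t=1}^{m}B_{n,k}(t)E_{n,k}(t)\bigr]$,
a product of two \emph{finite sums} whose number of terms $m\thicksim\xi n/2$ grows with $n$. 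A sum of hypergeometric terms is not a product of gamma ratios, so there is nothing to which one can apply $\Gamma(a+\alpha)/\Gamma(a+\beta)\thicksim a^{\alpha-\beta}$ factor by factor, and Tannery/dominated convergence cannot be invoked on a representation that does not exist. Evaluating $\lim_{n\to\infty}$ of these sums is the entire analytic content of the proof: the paper rewrites each sum as a limit of a very well-poised $_7F_6$ series, applies the Whipple-type transformation~\eqref{eqn:TransForm} to reduce it to a $_4F_3$ whose numerator parameter $2-k$ forces termination after $k-2$ terms \emph{independently of $n$} (this is what legitimises interchanging the limit with the sum), applies Stirling only to the resulting prefactor to obtain a terminating $_2F_1$ at argument $-\xi(\xi+2)$, and then uses~\eqref{eqn:Trans2F1} before letting $k\to\infty$. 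None of this machinery is replaceable by factorwise gamma asymptotics.

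Your instinct that the answer is a perfect square because of Proposition~\ref{thm:MatchFac} is the right one, and it is exactly how the paper proceeds: $\omega_H=\omega_{H^+}\cdot\omega_{H^-}$, with each factor computed separately. But note that the two factors are not equal --- one is asymptotically $\frac{1}{2\pi k\sqrt{\xi(\xi+2)}}\bigl(\frac{\xi+1}{2}\bigr)^{2k-2}$ and the other $\frac{\sqrt{\xi(\xi+2)}}{2\pi k}\bigl(\frac{\xi+1}{2}\bigr)^{2k-2}$ --- so the square emerges only after the $\sqrt{\xi(\xi+2)}$ factors cancel in the product; the heuristic ``each hole contributes $\frac{1}{\sqrt{2k\pi}}(\cdot)^{2k-2}$'' attributes the splitting to the wrong decomposition (the two half-planes, not the two holes) and would not survive the actual computation. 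To repair the proposal you would need to replace the whole second step with the hypergeometric analysis of Section~\ref{sec:Asym}, or find some other way to evaluate $\lim_{n\to\infty}$ of the two sums.
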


Denote by $\Vreg{a}{b}{k}$ the left half of
$\Hreg{a}{b}{k}$ whose rightmost
boundary is the vertical line $l$ that intersects the origin. If $l$ is a free
boundary (that is, unit rhombi are permitted to protrude across $l$),
then the number of rhombus tilings of $\Vreg{a}{b}{k}$ is clearly
equal to the number of
vertically symmetric tilings of
$\Hreg{a}{b}{k}$. Figure~\ref{fig:VertSym} (left) shows
such a tiling for $\Hreg{6}{6}{4}$ along with the corresponding tiling of
$\Vreg{6}{6}{4}$ (centre).

As $a$ tends to infinity, the region
$\Vreg{a}{b}{k}$ gives the left half plane of unit
triangles, constrained on the right by $l$, and containing
a right-pointing triangular hole with its vertical side at lattice distance $k$
to the left of $l$ (see Figure~\ref{fig:VertSym}, right). Hence the correlation
function
$$\omega_{V}(k;\xi)=\lim_{a\to\infty}\frac{M(\Vreg{a}{b}{k})}{M(V_{a,b})}$$
gives the interaction (in terms of $k$) between a right-pointing triangle of
side length two and a vertical free boundary that borders a sea of unit rhombi
that tile the left half plane.

\begin{thm}\label{thm:VertCor}
 As $k$ becomes very large the interaction defined
above, $\omega_V(k;\xi)$, is asymptotically
$$\omega_{V}(k;\xi)\thicksim\frac{\sqrt{\xi(\xi+2)}}{2\pi
k}\left(\frac{(\xi+1)}{ 2}\right)^{2(k-1)}.$$
\end{thm}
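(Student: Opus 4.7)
The plan is to combine the exact enumeration formula for $M(\Vreg{a}{b}{k})$ provided by Theorem~\ref{thm:VertExact} with the classical MacMahon-type product formula for $M(V_{a,b})$ (the ``half-hexagon'' with a free vertical boundary, whose tiling count is well known to be a product of small factors, arising for example from the factorisation result Proposition~\ref{thm:MatchFac} applied to $H_{a,b}$). First I would write
$$\frac{M(\Vreg{a}{b}{k})}{M(V_{a,b})}$$
as an explicit ratio of products. Since both the numerator and denominator factorise over similar index ranges, I expect massive cancellation, leaving a finite product of Pochhammer symbols in $a$ and $b$ whose dependence on $k$ is separated out as a factor that does not depend on $a$.

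Next I would send $a\to\infty$ while keeping $b\sim\xi a$ integral. Using the standard Stirling ratio
$$\lim_{a\to\infty}\frac{(\alpha a+\beta)!}{(\alpha a+\gamma)!}(\alpha a)^{\gamma-\beta}=1,$$
each surviving Pochhammer symbol involving $a$ limits to a power of $\xi$ or $(\xi+1)$ or $(\xi+2)$, producing a closed-form expression for $\omega_V(k;\xi)$ as a finite product depending only on $k$ and $\xi$. I expect this expression to take the shape of $\left(\frac{\xi+1}{2}\right)^{c_1 k}$ times a product of $k$-dependent Pochhammer-like factors, multiplied by a polynomial in $\xi$ that already contains the $\sqrt{\xi(\xi+2)}$ factor in nascent form.

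Finally, with the exact expression for $\omega_V(k;\xi)$ in hand, I would carry out the asymptotic analysis $k\to\infty$. Each remaining factor of the form $(\alpha)_k$ or $k!$ is converted via Stirling's approximation
$$\Gamma(k+\alpha)\sim\sqrt{2\pi}\,k^{k+\alpha-1/2}e^{-k},$$
after which the various exponential factors in $k$ must be collected and checked to combine into the predicted $\left(\frac{\xi+1}{2}\right)^{2(k-1)}$. The polynomial prefactor $\frac{\sqrt{\xi(\xi+2)}}{2\pi k}$ then emerges from the surviving $\sqrt{2\pi}$ constants, the leftover powers of $k$, and the $\xi$-dependent exponents from Step~2.

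The main obstacle I anticipate is the bookkeeping in the last step: the exact formula from Theorem~\ref{thm:VertExact} presumably contains Pfaffian-derived products indexed by parities (recall Remark~\ref{rmk:Parity}), so the Stirling analysis must be performed carefully, possibly splitting into the cases $k$ even and $k$ odd, and the leading constant $\sqrt{\xi(\xi+2)}/(2\pi)$ will only appear after several seemingly unrelated Stirling contributions cancel. A secondary, more technical obstacle is controlling uniformity of the $a\to\infty$ limit so that the interchange of the limits $a\to\infty$ and the subsequent $k\to\infty$ analysis is justified; this should however be routine given that the limit in $a$ is term-by-term in a finite product.
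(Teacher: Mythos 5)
Your strategy founders at the first step: the exact formula for $M(\Vreg{n}{2m}{k})$ in Theorem~\ref{thm:VertExact} is \emph{not} a product formula. It has the shape
$$M(\Vreg{n}{2m}{k})=\Bigl[\sum_{s=1}^{m}B_{n,k}(s)\,D_{n,k}(s)\Bigr]\times ST(n,2m),$$
so while the product part $ST(n,2m)=M(V_{n,2m})$ does cancel exactly against the denominator of the correlation function, what remains is
$$\omega_V(k;\xi)=\lim_{n\to\infty}\sum_{s=1}^{m}B_{n,k}(s)\,D_{n,k}(s),$$
the limit of a finite sum whose number of terms $m\thicksim\xi n/2$ grows with $n$. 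There is no ``massive cancellation'' leaving a finite product of Pochhammer symbols, and the proposed factor-by-factor Stirling analysis has nothing to act on: one cannot interchange $\lim_{n\to\infty}$ with a sum of unboundedly many terms without further justification, and that interchange is precisely the crux of the whole problem.

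The paper's proof is organised around exactly this difficulty. The sum is rewritten as a limit of two very-well-poised, artificially terminated $_7F_6$ series; the transformation formula~\eqref{eqn:TransForm} converts each into a $_4F_3$ carrying the numerator parameter $2-k$, which therefore terminates after $k-2$ terms --- a bound \emph{independent of $n$}. Only then is the interchange of limit and sum legitimate; Stirling's approximation applied to the prefactors shows that one of the two pieces vanishes as $n\to\infty$ while the other converges to $\frac{(\xi(\xi+2))^{3/2}4^{1-k}}{3\pi e}\,\pFq{2}{1}{2-k,\,3/2}{5/2}{-\xi(\xi+2)}$. A further application of~\eqref{eqn:Trans2F1} turns this into a terminating $_2F_1$ at argument $(\xi+1)^{-2}$, and a geometric-series bound then yields the stated $k\to\infty$ asymptotics. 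None of this hypergeometric machinery appears in your outline, and without it (or some substitute for controlling the sum of growing length) the argument cannot be completed. Your secondary concerns about parity cases and uniformity are comparatively minor; the missing idea is the one above.
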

\begin{figure}[t!]
\includegraphics[scale=0.4]{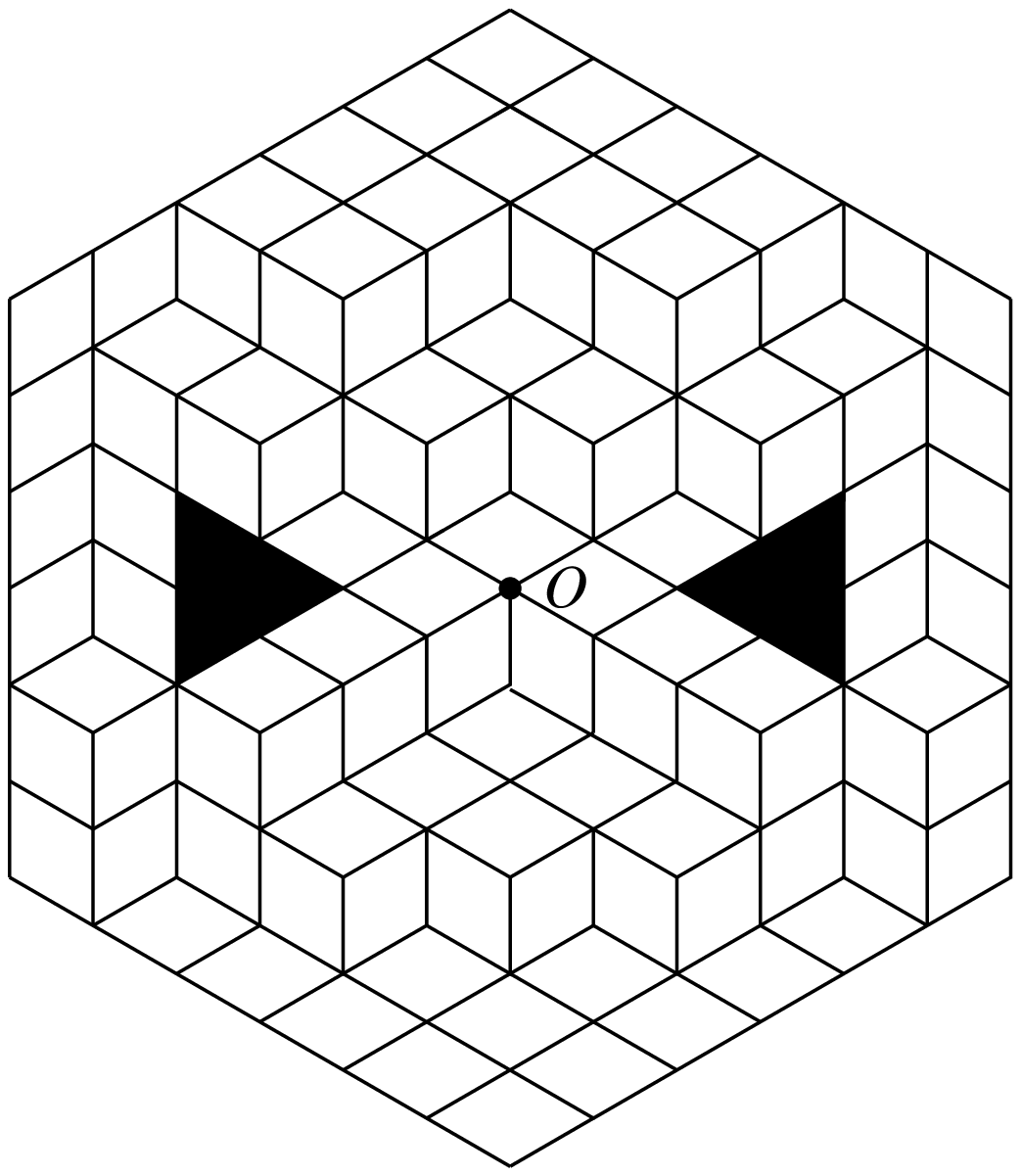}\qquad\qquad
 \includegraphics[scale=0.4]{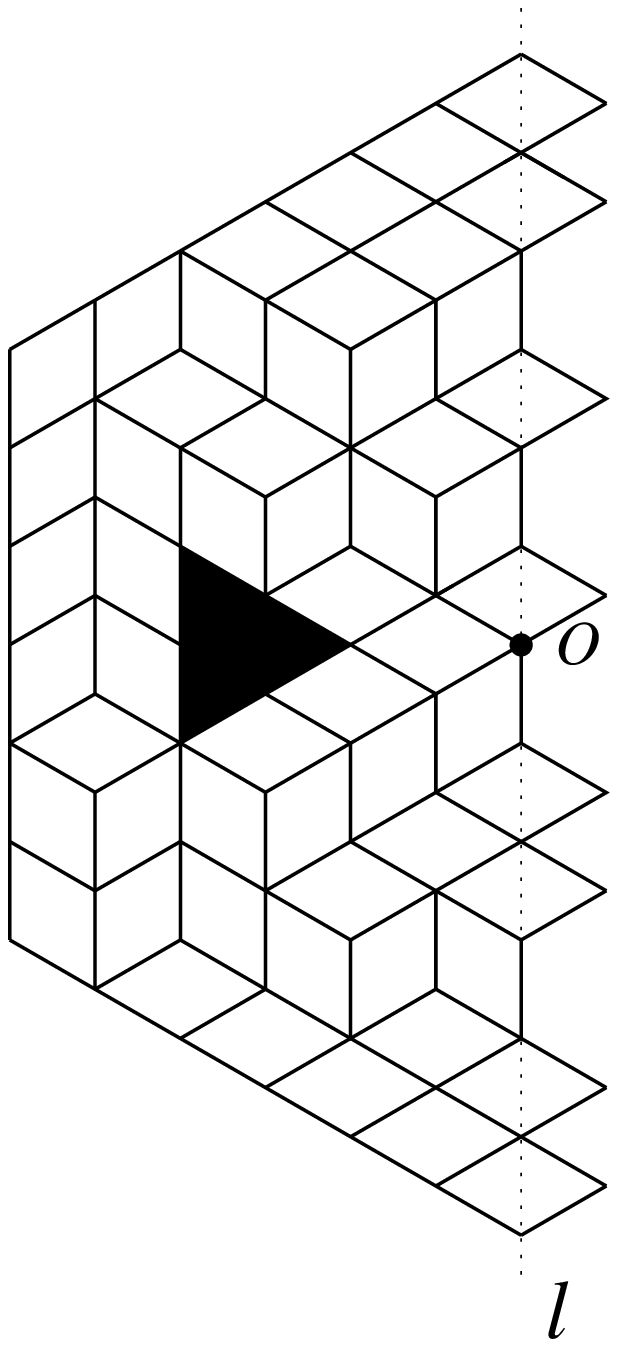}\includegraphics[scale=0.4]{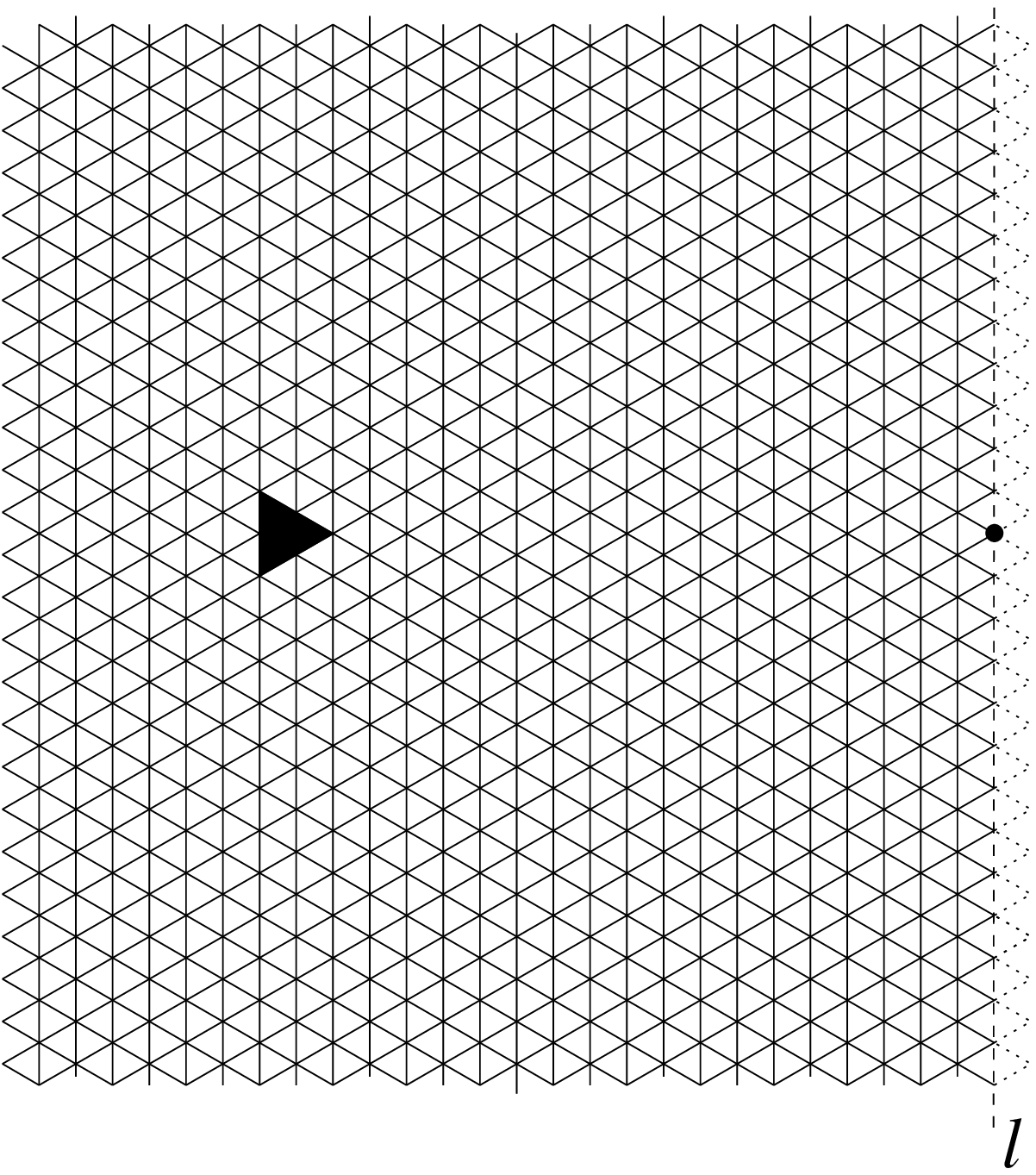}
\caption { Left: a vertically
symmetric rhombus tiling of
$\Hreg{6}{6}{4}$. Centre: the corresponding tiling
of $\Vreg{6}{6}{4}$. Right: a small section of the left half of the plane
of unit triangles given by $\Vreg{a}{b}{20}$ as
$a$ tends to infinity.}
\label{fig:VertSym}
\end{figure}
Consider now the two regions obtained by cutting
$\Hreg{a}{b}{k}$ along the
zig-zag line that lies just below its horizontal
symmetry axis. Denote the upper and lower portions of
$\Hreg{a}{b}{k}$ with
$H_{a,b}^+\setminus(\triangleright_{k}\cup\triangleleft_k)$
and $\HMreg{a}{b}{k}$ respectively. Figure~\ref{fig:HalfHalf} (left) shows the
upper and lower regions
of $\Hreg{7}{8}{5}$. If $b$ is even then rhombus tilings of
$\HMreg{a}{b}{k}$ correspond to horizontally symmetric tilings of the entire
region $\Hreg{a}{b}{k}$.

Now consider $\HMreg{a}{b}{k}$ as $a$ is sent to infinity. Clearly one obtains
the lower half plane
of unit triangles, bounded above by a fixed boundary on which lie two
triangular holes with vertical sides at lattice distance $2k$ apart-- see
Figure~\ref{fig:HalfHalf} (right). The correlation function
$$\omega_{H^-}(k;\xi)=\lim_{a\to\infty}\frac{M(\HMreg{a}{b}{k})}{M(H^-_{a,b})}$$
then gives the interaction (in terms of $k$) of a pair of triangular holes of
side length two that point directly towards each other and are positioned along
the fixed upper boundary that borders unit rhombus tilings of the lower half
plane.

\begin{thm}\label{thm:HorzCor}
 As $k$ becomes very large the correlation function
defined above, $\omega_{H^-}(k;\xi)$, is asymptotically equivalent to that of
$\omega_{V}(k;\xi)$ from
Theorem~\ref{thm:VertCor}.
\end{thm}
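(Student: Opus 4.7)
The plan is to proceed exactly as for Theorem~\ref{thm:VertCor}, using the exact enumerations from Section~\ref{sec:EnumForm} and then performing an asymptotic analysis in two stages: first letting $a\to\infty$ with $b\sim\xi a$, then letting $k\to\infty$. The key input will be Theorem~\ref{thm:HorzExact}, which gives a closed product formula for $M(\HMreg{a}{b}{k})$, together with Theorem~\ref{thm:HorzWExact}, which supplies a companion formula for $M(H_{a,b}^-)$ (the same region with no triangular holes). Forming the quotient and substituting these product expressions should cause the bulk of the $a$- and $b$-dependent factors to telescope or cancel, leaving a ratio of Gamma/Pochhammer factors that depend on $a$, $b$, and $k$.

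First I will apply Stirling's approximation to the surviving Gamma functions, letting $a\to\infty$ with $b\sim\xi a$. Because the enumeration formulas are rational in factorials with linear arguments in $a$ and $b$, the limit should collapse to a reasonably compact expression $F(k,\xi)$ depending only on $k$ and $\xi$. Anticipating the structure already observed in the proof of Theorem~\ref{thm:VertCor}, I expect $F(k,\xi)$ to factor as a product of a rational function in $k$ (involving $(\xi+1)$, $\xi$, and $\xi+2$) times a power of $(\xi+1)/2$ of order $2k$. This finite-$k$ closed form for $\omega_{H^-}(k;\xi)$ is the central intermediate goal.

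Next I perform the $k\to\infty$ asymptotic on $F(k,\xi)$, again using Stirling's formula on each of the remaining Gamma factors. The exponential factor $((\xi+1)/2)^{2k-2}$ should emerge directly, and the polynomial correction $\sqrt{\xi(\xi+2)}/(2\pi k)$ should materialise from the ratio of square roots produced by the four Stirling expansions governing the four principal Gamma factors in $F(k,\xi)$. Having established this, I compare term by term with the asymptotic in Theorem~\ref{thm:VertCor}, and the equivalence claimed in Theorem~\ref{thm:HorzCor} follows.

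The main obstacle I anticipate is showing that the delicate prefactor $\sqrt{\xi(\xi+2)}/(2\pi k)$ emerges identically to the one produced in Theorem~\ref{thm:VertCor}, despite the combinatorial setups being genuinely different (a free boundary with a single hole versus a fixed boundary with two holes). The two holes on the fixed boundary contribute a Pfaffian-like structure that is morally the "square root" of what one might naively expect, and extracting the correct leading constant will require careful bookkeeping of the Stirling error terms through both limits. A secondary technical point is ensuring that the limit $a\to\infty$ and the resulting Stirling expansions commute with the double-limit structure in $\omega_{H^-}(k;\xi)$, which should follow from the uniform convergence already used (implicitly) in the proof of Theorem~\ref{thm:VertCor}.
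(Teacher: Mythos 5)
Your overall strategy (form the quotient, send $a\to\infty$ with $b\sim\xi a$, then send $k\to\infty$) has the right shape, but two concrete problems remain. First, a misidentification of inputs: Theorem~\ref{thm:HorzWExact} counts the \emph{weighted upper} region $\HPreg{n}{2m}{k}$, not the hole-free lower half-hexagon; the denominator $M(H^-_{n,2m})$ is Proctor's $TC(n,2m)$, which already appears as an explicit factor in Theorem~\ref{thm:HorzExact}, so the quotient collapses at once to $\omega_{H^-}(k;\xi)=\lim_{n\to\infty}\sum_{s=1}^{m}B'_{n,k}(s)\,D'_{n,k}(s)$. Second, and more seriously, what survives is not ``a ratio of Gamma/Pochhammer factors'' to which Stirling can be applied directly: it is a finite sum whose number of terms $m\sim\xi n/2$ grows with $n$, and your plan contains no mechanism for handling this. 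The paper's proof rewrites the sum as a limit of a very well-poised ${}_7F_6$ series, applies the transformation~\eqref{eqn:TransForm} to turn it into a ${}_4F_3$ with upper parameter $2-k$ --- a series that terminates after $k-2$ terms, i.e.\ whose length is independent of $n$ --- and only then interchanges the limit with the sum and applies Stirling to the prefactor. Without such a transformation the interchange of $\lim_{n\to\infty}$ with the sum is unjustified and the computation does not go through.

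Moreover, the ``main obstacle'' you anticipate --- delicately matching the prefactor $\sqrt{\xi(\xi+2)}/(2\pi k)$ against Theorem~\ref{thm:VertCor} by bookkeeping Stirling errors through both limits --- does not arise. After the $n\to\infty$ limit one finds that $\omega_{H^-}(k;\xi)$ equals \emph{exactly} the closed form obtained for $\omega_V(k;\xi)$, namely expression~\eqref{eqn:SB2}, $\frac{(\xi(\xi+2))^{3/2}4^{1-k}}{3\pi e}\,{}_2F_1(2-k,\,3/2;\,5/2;\,-\xi(\xi+2))$: the single ${}_7F_6$ arising for $\omega_{H^-}$ transforms into the same ${}_4F_3$ as the dominant one of the two series arising for $\omega_V$. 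Once this identity of finite-$k$ expressions is in hand, no separate $k\to\infty$ analysis is needed and the claimed asymptotic equivalence is immediate. Your route would instead force you to redo the $k\to\infty$ asymptotics (via the ${}_2F_1$ transformation~\eqref{eqn:Trans2F1} and a geometric-series bound justifying the interchange of $\lim_{k\to\infty}$ with the terminating sum), which is both harder and unnecessary.
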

\begin{figure}[t]
 \includegraphics[scale=0.4]{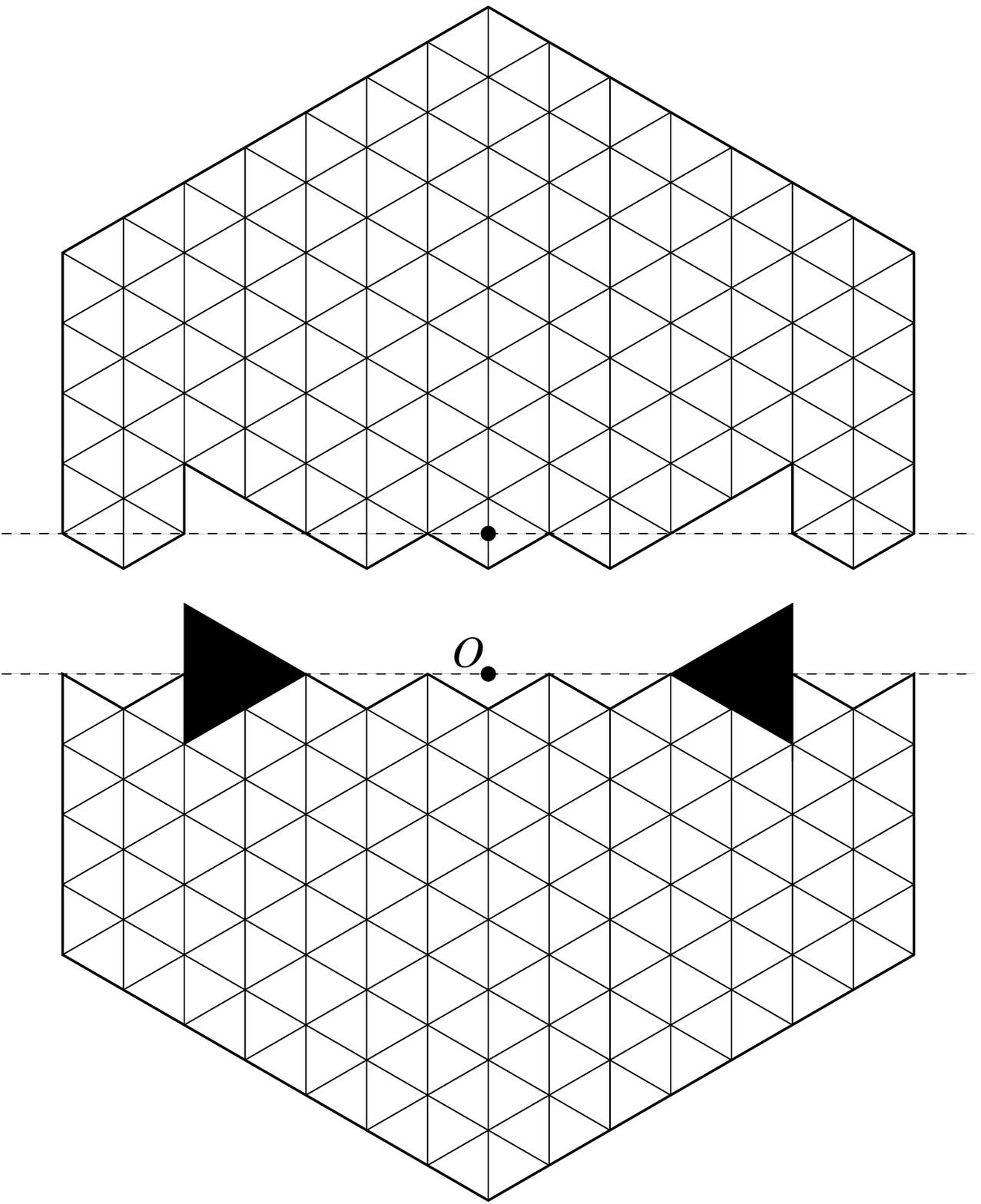}\quad\includegraphics[scale=0.6]{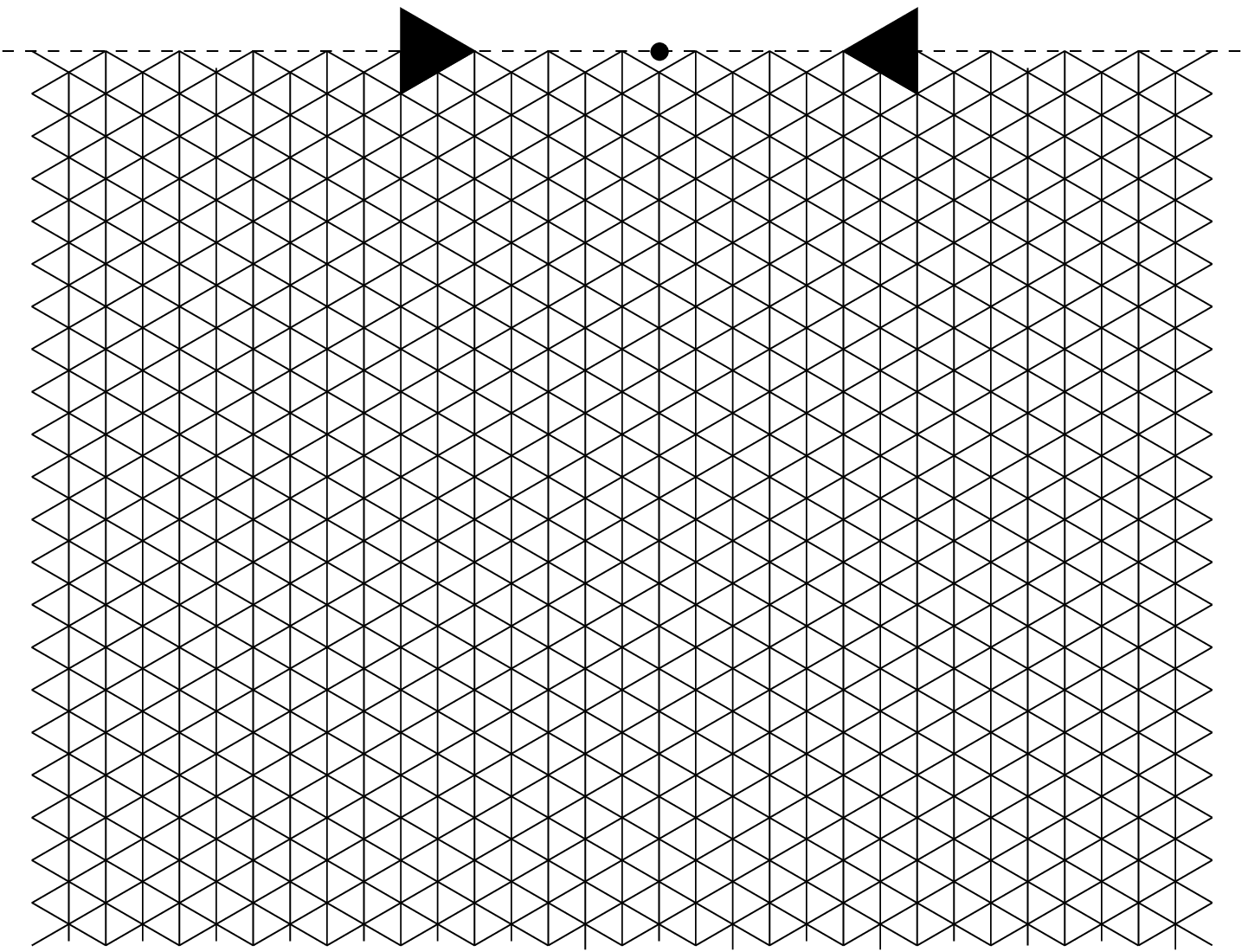}
\caption{Left: The regions
$H_{7,8}^+\setminus(\triangleright_4\cup\triangleleft_4)$ (upper) and
$\HMreg{7}{8}{4}$ (lower). Right: a small section of the lower half of the
plane of unit triangles given by $\HMreg{a}{b}{7}$ as
$a$ tends to infinity.}
\label{fig:HalfHalf}
\end{figure}

Observe that for $\xi\neq 1$, Theorem~\ref{thm:CompCorr} and
Theorem~\ref{thm:VertCor} give distorted dimer statistics.
Indeed, these expressions blow up or vanish exponentially for $\xi>1$ and
$\xi<1$ respectively. So when considering the three types of interaction
detailed previously, it makes sense to consider only regular hexagons, that is,
hexagons with all sides the same length. In light of this the above asymptotic
expressions may be re-packaged in the following way:
\begin{align}\label{eq:phys1}
 \omega_V(k;1)\thicksim&\frac{3}{4\pi\textrm{d}(\triangleright_k,O)}\\
\label{eq:phys2}
 \omega_H(k;1)\thicksim&\left(\frac{\sqrt{3}}{\pi\textrm{d}(\triangleright_k,
\triangleleft_k) } \right)^2
\end{align}
where d$(a,b)$ denotes the Euclidean distance between $a$ and $b$.

The first
result above is analogous to that of Ciucu and Krattenthaler~\cite{KrattFree},
which presents an asymptotic result for the correlation of a triangular hole
that has been
``flipped'' and instead points away from the vertical free boundary $l$. Within
an
infinitely large sea of rhombi one could be forgiven for thinking that the
orientation of such a hole would have no effect on the interaction between the
hole and the free boundary, yet this result shows that such an interaction
differs by a factor of 3. Any intuitive reason as to
why this phenomenon should occur remains a complete mystery. 

Both of these results are entirely in keeping with the original conjecture of
Ciucu~\cite[Conjecture 1]{MihaiPN}. The asymptotic expression for
$\omega_{H}(k;\xi)$ may be obtained (up to a multiplicative constant) by
considering the exponential of the
negative of the electrostatic energy of the system of physical charges obtained
by viewing the triangular holes as point charges, where one has signed magnitude
$+2$ and the other $-2$ according to the definition of the statistic $q$ in
Section~\ref{sec:Intro}. In a similar way the asymptotic expression for
$\omega_{V}(k;\xi)$ shows that a triangular gap is attracted to a vertical free
boundary $l$ in precisely the same way that an electrical charge is attracted to
a straight line conductor when placed near it. Hence ~\eqref{eq:phys1} and
~\eqref{eq:phys2} above further support the hypothesis that interactions of gaps
in the hexagonal lattice are governed by Coulomb's laws for two dimensional
electrostatics.

This section concludes with an important proposition concerning the enumeration
of
tilings of the entire region
\Hreg{a}{b}{k}. It follows from
the Matchings Factorization Theorem~\cite{MatchFac} by employing arguments that
are almost identical to those of Ciucu and Krattenthaler~\cite{KrattFact}.

\begin{prop}\label{thm:MatchFac} The total number of tilings of the region
\Hreg{a}{b}{k} has the factorisation
$$M(\Hreg{a}{b}{k})=M(\HMreg{a}{b}{k})\cdot
M_w(H_{a,b}^+\setminus(\triangleright_k\cup\triangleleft_k)),$$
where $M(H)$ denotes the number of tilings of the region $H$ and
$M_w(H_{a,b}^+\setminus(\triangleright_k\cup\triangleleft_k))$ denotes the
weighted count of the region
$H_{a,b}^+\setminus(\triangleright_k\cup\triangleleft_k)$ where the unit rhombi
that lie along its bottom edge carry a weight of $2$.
\end{prop}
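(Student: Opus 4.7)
The plan is to invoke the Matchings Factorization Theorem (MFT) of~\cite{MatchFac} essentially as a black box, mirroring the strategy of Ciucu and Krattenthaler in~\cite{KrattFact}. First, I would translate the tiling problem into its perfect-matching incarnation on the bipartite subgraph $G$ of the hexagonal lattice dual to $\Hreg{a}{b}{k}$. The region has an evident horizontal symmetry axis; the two holes $\triangleright_k$ and $\triangleleft_k$ are placed symmetrically with respect to this axis (each with the midpoint of its vertical side lying on it) and are swapped by the reflection, so $G$ itself is invariant under the induced graph automorphism.

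Second, I would identify the cut: the zig-zag path lying just below the horizontal symmetry axis of $H_{a,b}$. Because $\triangleright_k$ and $\triangleleft_k$ have side length $2$, their vertical edges extend one unit above and one unit below the axis, and so both holes sit strictly \emph{above} the chosen zig-zag. This is exactly the configuration in which MFT applies: removing the cut edges separates $G$ into a lower piece corresponding to $\HMreg{a}{b}{k}$ and an upper piece corresponding to $H_{a,b}^+\setminus(\triangleright_k\cup\triangleleft_k)$, each of which is planar and bipartite, and the colour classes on the two sides are balanced because the holes together satisfy $q(\triangleright_k)+q(\triangleleft_k)=0$.

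Third, I would apply MFT directly. The theorem yields
$$M(\Hreg{a}{b}{k})=M(\HMreg{a}{b}{k})\cdot M_w\bigl(H_{a,b}^+\setminus(\triangleright_k\cup\triangleleft_k)\bigr),$$
where $M_w$ counts tilings of the upper sub-region with the rhombi crossing the cut (equivalently, lying along its bottom zig-zag edge) weighted by $2$, precisely the weighting prescribed by MFT on edges straddling the symmetry axis. Since the geometric verification of the hypotheses-- planarity on each side, the matching of boundary vertices along the zig-zag, and the correct tallying of the weight-$2$ edges-- is word-for-word what appears in~\cite{KrattFact} (with only the two extra holes in the upper half distinguishing the present situation), I would cite that argument rather than reproduce it.

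The only genuinely non-routine step is checking the balance/parity condition needed to invoke MFT in the presence of $\triangleright_k$ and $\triangleleft_k$: one must confirm that after removing the two holes from the upper half, the colour classes of the resulting bipartite graph still have the cardinalities demanded by MFT, and that the holes' position (specifically, that each lies entirely above the zig-zag cut) is compatible with the theorem's setup. Once this geometric check is made-- it is essentially Remark~\ref{rmk:Parity} in disguise-- the factorisation follows without further work.
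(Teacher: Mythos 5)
Your overall strategy coincides with the paper's: the paper offers no proof of Proposition~\ref{thm:MatchFac} beyond the remark that it "follows from the Matchings Factorization Theorem by employing arguments that are almost identical to those of Ciucu and Krattenthaler," which is precisely your plan of invoking MFT as a black box and deferring the routine verifications to~\cite{KrattFact}. The one substantive geometric claim you add, however, is wrong: the holes do \emph{not} sit strictly above the zig-zag cut. Each of $\triangleright_k$ and $\triangleleft_k$ has side length two with the midpoint of its vertical side on the horizontal symmetry axis, so each hole extends one unit above and one unit below that axis and is therefore split by the cut; this is why the paper writes \emph{both} halves with the holes removed, as $H_{a,b}^+\setminus(\triangleright_k\cup\triangleleft_k)$ and $\HMreg{a}{b}{k}$, and why Figure~\ref{fig:HalfHalf} shows notches on the top boundary of the lower region. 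Your premise would force the lower piece to be the hole-free $H^-_{a,b}$, which contradicts the statement being proved. The factorisation survives because MFT only requires the configuration (here, the pair of holes) to be invariant under the reflection, not to avoid the axis; but the balance check you flag at the end should be carried out for the axis-straddling configuration, not for the (false) picture of two holes confined to the upper half.
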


\begin{rmk}
 From now on, denote by \HPreg{a}{b}{k} the region
$H_{a,b}^+\setminus(\triangleright_k\cup\triangleleft_k)$ with the added
condition that the unit rhombi lying along its bottom edge have a weight of 2,
so that
$M_w(H_{a,b}^+\setminus(\triangleright_k\cup\triangleleft_k)=M(\HPreg{a}{b}{k}
)$.
\end{rmk}

This proposition implies that rhombus tilings of
$\Hreg{a}{b}{k}$ may be enumerated by counting separately the number of
tilings of $\HPreg{a}{b}{k}$ and
$\HMreg{a}{b}{k}$ and taking their product. If $b$ is an even integer then
tilings of the latter region correspond to horizontally symmetric tilings of the
entire hexagon $\Hreg{a}{b}{k}$. By enumerating tilings of
$\Hreg{a}{b}{k}$ in terms of this factorisation formula one obtains, for free,
an enumeration formula for horizontally symmetric tilings of $\Hreg{a}{b}{k}$. 

A similar correlation function involving $\HPreg{a}{b}{k}$ may also be defined,
namely
$$\omega_{H^+}(k;\xi)=\lim_{a\to\infty}\frac{M(\HPreg{a}{b}{k}
)}{M(\xoverline{H}^+_{a,b})},$$
which gives the interaction between a pair of triangular holes of side length
two that lie on the
fixed weighted boundary of a sea of unit rhombi. According to~\cite{KrattFact},
the number of rhombus tilings of
$\xoverline{H}^+_{a,b}$ is equal to the number of tilings of $V_{a,b}$ and so
this correlation function becomes

$$\omega_{H^+}(k;\xi)=\lim_{a\to\infty}\frac{M(\HPreg{a}{b}{k}
)}{M(V_{a,b})}.$$

Theorem~\ref{thm:VertCor} above is obtained by
directly analysing $\omega_V(k;\xi)$ as $k$ becomes large, whereas
Theorem~\ref{thm:CompCorr} follows from considering the
product of $\omega_{H^+}(k;\xi)$ and $\omega_{H^-}(k;\xi)$ asymptotically. In
order to do so, it is necessary to first find exact enumeration formulas for
$M(\Vreg{a}{b}{k})$, $M(\HMreg{a}{b}{k})$, and $M(\HPreg{a}{b}{k})$.
Sections~\ref{sec:EnumForm},~\ref{sec:NonInt} and~\ref{sec:eval} are concerned
with stating and proving these exact formulas, while the
proofs of Theorems~\ref{thm:CompCorr},~\ref{thm:VertCor}
and~\ref{thm:HorzCor} may be found in Section~\ref{sec:Asym}.

\section{Some Exact Enumeration Formulas}\label{sec:EnumForm}
The enumeration of rhombus tilings (in various forms and guises) has a fairly
long history beginning with MacMahon~\cite{CombAnal} in the early
20th century. Although at the time MacMahon was interested in counting the
number of plane partitions whose Young tableaux fit inside an $a\times b\times
c$ box, there is a straightforward bijection between the three dimensional
representation of these objects (as unit cubes stacked in the corner of an
$a\times b\times c$ box) and two dimensional unit rhombus tilings of
$H_{a,b,c}$, the hexagon with side lengths $a$, $b$, $c$, $a$, $b$, $c$ (going
clockwise from the southwest edge).
Under this same bijection symmetric plane partitions whose Young tableaux fit
inside
an $a\times a\times b$ box correspond to vertically symmetric rhombus tilings of
$H_{a,b}$. Similarly, transpose-complementary plane partitions whose
Young tableaux fit inside an $a\times a\times 2b$ box correspond to
horizontally symmetric tilings of $H_{a,2b}$.

In~\cite[p. 270]{CombAnal} MacMahon conjectured a formula for the
weighted enumeration of symmetric plane partitions of height at most $b$ whose
Young
Tableaux fit inside an $a\times a\times b$ box. Under some specialisation,
MacMahon's
conjectured formula counts vertically symmetric rhombus tilings of $H_{a,b}$.
Andrews managed to prove MacMahon's conjecture in full in~\cite{PlanePart1},
and hence established the (ordinary) enumeration of symmetric plane partitions.
As far as ordinary enumeration goes, however, Andrews was not the first to
prove the corresponding result. It was Gordon who, around 1970 (though
published much later in~\cite{Gordon}), proved the so-called Bender-Knuth
conjecture~\cite{BKCon}, which for a special case gives the ordinary
enumeration of symmetric plane partitions. Since then further refinements and
alternative proofs have been found by Macdonald~\cite[pp. 83--85]{McDon},
Proctor
\cite[Prop. 7.3]{Proct}, Fischer~\cite{Ilse2} and Krattenthaler~\cite[Theorem
13]{Kratt2}. This theorem states (in an equivalent form) that the number of
vertically symmetric rhombus tilings of $H_{a,b}$ (that is, the number of
rhombus tilings of the region $V_{a,b}$) is
$$ST(a,b)=\prod_{i=1}^{a}\frac{2i+b-1}{2i-1}\prod_{1\le
i<j\le a}\frac{i+j+b-1}{i+j-1}.$$
A similar result holds for rhombus tilings of $\Vreg{a}{b}{k}$.
\begin{thm}\label{thm:VertExact}
Suppose $k$ is a non-negative integer.
\begin{enumerate}[(i)]
 \item The number of
vertically symmetric tilings of $\Hreg{n}{2m}{k}$ is
$$\left[\sum_{s=1}^{m}B_{n,k}(s)\cdot D_{n,k}(s)
\right]\times ST(n,2m),$$
where
\begin{align*}
B_{n,k}(s)&=\frac{(-1)^{s+1} (-k+n+1)! (n+s-1)! (n+2s-1)!
\left(\frac{k}{2}+\frac{n}{2}+s-2\right)!}{(s-1)!
\left(\frac{n}{2}-\frac{k}{2}\right)! \left(\frac{k}{2}+\frac{n}{2}-1\right)! (2
n+2 s-1)! \left(-\frac{k}{2}+\frac{n}{2}+s\right)!},\\
 D_{n,k}(s)&=\frac{(-1)^{s+1} (2 s-2)! (n-k)! (n+s-1)!
\left(\frac{k}{2}+\frac{n}{2}+s-2\right)!}{(s-1)!
\left(\frac{n}{2}-\frac{k}{2}\right)! \left(\frac{k}{2}+\frac{n}{2}-1\right)!
(n+2s-2)! \left(-\frac{k}{2}+\frac{n}{2}+s\right)!}\\&\quad+\frac{2 (-1)^{s+1}
(2s-2)! (n-k)! (n+s)! \left(\frac{k}{2}+\frac{n}{2}+s-2\right)!}{(s-2)!
\left(\frac{n}{2}-\frac{k}{2}\right)! \left(\frac{k}{2}+\frac{n}{2}\right)! (n+2
s-2)! \left(-\frac{k}{2}+\frac{n}{2}+s\right)!}.
\end{align*}
\item The number of vertically symmetric tilings $\Hreg{n}{2m-1}{k}$ is
$$ST(n,2m-1)\times\left[\sum_{s=1}^{m-1}\left(E_{n,k}^*(s)-\frac{D_
{n}^*(s)E_{n,k }^*(m) } {D_{n}^*(m)}\right)\cdot
B_{n,k}^*(s)\right]$$
where
\begin{align*}
 B_{n,k}^*(j)&=\frac{2 (-1)^{j+1} (j+n-1)! (2 j+n-1)! (n-k)! \left(\frac{1}{2}
(2 j+k+n-5)\right)!}{(j-1)! (2 j+2 n-1)! \left(\frac{1}{2} (-k+n-1)\right)!
\left(\frac{1}{2} (k+n-3)\right)! \left(\frac{1}{2} (2
j-k+n+1)\right)!}\\&+\frac{4 (-1)^{j+1} (j+n)! (2 j+n-1)! (n-k)!
\left(\frac{1}{2} (2 j+k+n-5)\right)!}{(j-2)! (2 j+2 n-1)! \left(\frac{1}{2}
(-k+n-1)\right)! \left(\frac{1}{2} (k+n-1)\right)! \left(\frac{1}{2} (2
j-k+n+1)\right)!}\\
D_{n}^*(i)&=\frac{2^{n}(2 i-2)! (i+n-1)!}{(i-1)! (2 i+n-2)!},\\
E_{n,k}^*(i)&=\frac{(-1)^{i+1} (2 i-2)! (i+n-1)! (n-k)! \left(\frac{1}{2} (2
i+k+n-3)\right)!}{(i-1)! (2 i+n-2)! \left(\frac{1}{2} (-k+n-1)\right)!
\left(\frac{1}{2} (k+n-1)\right)! \left(\frac{1}{2} (2
i-k+n-1)\right)!}.
\end{align*}
\end{enumerate}
\end{thm}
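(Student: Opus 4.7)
My plan is to reduce both parts of the theorem to the evaluation of a Pfaffian via the standard rhombus-tiling/lattice-path dictionary, then to simplify that Pfaffian to a determinant whose $LU$-decomposition yields the stated formulas. The overall strategy mirrors the workflow already advertised for Sections~\ref{sec:NonInt} and~\ref{sec:eval}.

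First, I would translate rhombus tilings of $\Vreg{n}{b}{k}$ into families of non-intersecting lattice paths in the triangular lattice: each horizontal rhombus along the jagged left boundary becomes a path start, while the free right boundary $l$ together with the hole $\triangleright_k$ determines the set of allowed endpoints. Because $l$ is a free boundary, the endpoints are not fixed but lie on a vertical segment (subject to some being blocked by the hole). This ``one family of fixed starting points, endpoints on a free line'' situation is precisely the setting where Stembridge's Pfaffian identity \cite{Stemb} applies, giving $M(\Vreg{n}{b}{k})=\Pf(A)$ for an explicit antisymmetric matrix $A=(a_{ij})$, whose entries are alternating sums of binomial coefficients counting single paths between a fixed start $P_i$ and two admissible endpoints $Q_j,Q_j'$ separated by the hole. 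This is Proposition~\ref{prop:VertMat}.

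Next, I would evaluate this Pfaffian. The parity split in the statement (cases $b=2m$ and $b=2m-1$) corresponds directly to $A$ having even or odd size, and by Remark~\ref{rmk:Parity} forces a corresponding parity on $k$. In each case I expect that a fixed sequence of row and column operations, using the symmetry of the hole configuration with respect to $l'$, knocks out almost all of the nontrivial structure, leaving a smaller matrix $M$ whose determinant equals (up to an easily tracked sign and $2$-power) the original Pfaffian; the perturbation introduced by $\triangleright_k$ survives only in a low-rank correction supported on a single row/column, which is the source of the sum $\sum_s$ in the final answer. This rank-one structure is what ultimately allows the answer to factor as $ST(n,b)$ times a univariate sum.

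The technical heart of the argument is then proving Theorems~\ref{thm:VertLU} and \ref{thm:VertOddLU}: that the reduced matrix $M$ admits an explicit $LU$-decomposition $M=LU$ with diagonal entries of $U$ whose telescoping product reproduces $ST(n,b)$, while the inverse action of $L$ on the rank-one correction column produces exactly the coefficients $B_{n,k}(s)$, $D_{n,k}(s)$ (respectively $B^*_{n,k}$, $D^*_{n}$, $E^*_{n,k}$ in the odd case). The verification of the $LU$-identities reduces, entry by entry, to hypergeometric identities of $_3F_2$- or $_4F_3$-type which I would confirm via Zeilberger's algorithm or known contiguous relations; this creeping hypergeometric bookkeeping, rather than any conceptual obstacle, is the main technical nuisance. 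Once the $LU$-decompositions are in hand, taking $\det(M)=\prod_i U_{ii}$, dividing through by $M(V_{n,b})=ST(n,b)$ where convenient, and regrouping the cofactors from the rank-one correction along column $s$ immediately yields the closed forms in (i) and (ii).
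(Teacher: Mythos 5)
Your proposal follows essentially the same route as the paper: tilings are converted to non-intersecting lattice paths with fixed starts, free endpoints on a line, and the hole contributing compulsory endpoints, so that Stembridge's theorem gives the Pfaffians of Proposition~\ref{prop:VertMat}; a Gordon-type sequence of row and column operations (Lemma~\ref{lem:Gordon}, plus ad hoc manipulations in the odd case) reduces each Pfaffian to a half-size determinant with the hole surviving as a single bordering row and column; and the explicit $LU$-decompositions of Theorems~\ref{thm:VertLU} and~\ref{thm:VertOddLU}, verified entrywise by Zeilberger-style hypergeometric identities, yield the stated products. The only cosmetic discrepancy is that the hole enters as two required interior endpoints $t^{\pm}$ rather than as blocked endpoints on the free boundary, which does not affect the argument.
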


The following formula, due to
Proctor~\cite{Proct}, counts the number of transpose-complementary plane
partitions that fit inside an $a\times a\times 2b$ box:

$$TC(a,2b)=\binom{a+b-1}{a-1}\prod_{i=1}^{a-2}\prod_{j=i}^{a-2}\frac{2b+i+j+1}
{i+j+1}.$$
Since transpose complementary plane partitions correspond to horizontally
symmetric tilings of $H_{a,b}$, the above formula counts precisely the
number of rhombus tilings of the region $H^-_{a,2b}$. Again, a similar
result holds for rhombus tilings of $\HMreg{a}{2b}{k}$.
\begin{thm}\label{thm:HorzExact}The number of horizontally symmetric tilings of
$\Hreg{n}{2m}{k}$ (equivalently, the number of tilings of the region
$\HMreg{n-1}{2m+1}{k}$) is given by
 $$\left[\sum_{s=1}^{m}B_{n,k}'(s)\cdot
D_{n,k}'(s)\right]\cdot TC(n,2m),$$
where
\begin{align*}
 B'_{n,k}(s)&=\frac{(-1)^{s+1} (n-k)! (n+s-2)! (n+2 s-1)! \left(\frac{1}{2}
(k+n+2 s-4)\right)!}{2 (s-1)! \frac{n-k}{2}! \left(\frac{1}{2} (k+n-2)\right)!
(2 n+2 s-3)! \left(-\frac{k}{2}+\frac{n}{2}+s\right)!},\\
D'_{n,k}(s)&=\frac{(-1)^{s+1} (2 s)! (n-k)! (n+s-1)!
\left(\frac{1}{2} (k+n+2 s-4)\right)!}{2( s!) \frac{n-k}{2}!
\left(\frac{1}{2} (k+n-2)\right)! (n+2 s-2)!
\left(-\frac{k}{2}+\frac{n}{2}+s\right)!}.
\end{align*}
\end{thm}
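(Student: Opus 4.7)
The plan is to follow the scheme developed in Sections~\ref{sec:NonInt} and~\ref{sec:eval}. First I would apply the standard bijection carrying rhombus tilings of $\HMreg{n-1}{2m+1}{k}$ to families of non-intersecting lattice paths on a grid. The zig-zag upper boundary of this region presents a natural sequence of sources and sinks; the two holes $\triangleright_k$ and $\triangleleft_k$ sit on that boundary and disrupt precisely two of the endpoint rows, while the remainder of the configuration matches the path system for the hole-free lower half-hexagon $H^-_{n,2m}$. This is the setting of Proposition~\ref{prop:HorzMat}, which via the Lindstr\"om--Gessel--Viennot lemma expresses $M(\HMreg{n-1}{2m+1}{k})$ as a single determinant $\det A_{n,m,k}$ with entries that are simple binomial coefficients.

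Next I would appeal to the $LU$-decomposition $A_{n,m,k}=LU$ asserted in Theorem~\ref{thm:HorzLU}. The essential feature is that $A_{n,m,k}$ differs from the matrix for the hole-free half-hexagon only by a low-rank deformation confined to the rows and columns that record the presence of $\triangleright_k$ and $\triangleleft_k$. Writing $\det A_{n,m,k}=\prod_i U_{ii}$ separates two contributions: the ``bulk'' diagonal product, which by the classical transpose-complementary product formula simplifies directly to $TC(n,2m)$, and a ``hole'' factor arising from the distinguished diagonal entry where $k$ enters. This hole factor is exactly the sum $\sum_{s=1}^{m}B'_{n,k}(s)\,D'_{n,k}(s)$: the index $s$ parametrises the position along the deformed row/column, while the two summands inside $D'_{n,k}(s)$ correspond to contributions of the two holes (a left-pointing and a right-pointing triangle placed symmetrically about the vertical axis).

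The main obstacle will be certifying the $LU$-decomposition itself. To establish $LU=A_{n,m,k}$ entrywise one must verify identities of the form $\sum_t L_{it}U_{tj}=(A_{n,m,k})_{ij}$, and after substituting the explicit closed forms these reduce to hypergeometric sums typically of ${}_3F_2$ or ${}_4F_3$ type. The natural tools are Pfaff--Saalsch\"utz, the Karlsson--Minton summations and contiguous-relation manipulations; when no classical identity applies directly I would fall back on Gosper/Zeilberger certification, exactly as the analogous verifications are carried out elsewhere in the paper. Once the $LU$-decomposition is in hand, the remaining bookkeeping is routine: one recognises $\prod_i U_{ii}$ as $TC(n,2m)$ times the stated sum, and the parity constraint $b=2m$ together with the slightly shifted dimensions $n-1,\,2m+1$ of the equivalent region are dictated solely by where the zig-zag cut meets the boundary of $\Hreg{n}{2m}{k}$, so they only enter through the enumeration of source/sink positions at the very first step.
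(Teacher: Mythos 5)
Your proposal follows essentially the same route as the paper: the lattice-path bijection and Lindstr\"om--Gessel--Viennot reduction of Proposition~\ref{prop:HorzMat} (with the reflection principle handling the constraint that paths avoid $y=x$, and the forced rhombi accounting for the shift from $(n-1,2m+1)$ to $(n,2m)$), followed by the $LU$-decomposition of Theorem~\ref{thm:HorzLU} certified via Zeilberger-style recurrences, with the diagonal of $U$ splitting into Proctor's product $TC(n,2m)$ and the hole factor $-\sum_{s}B'_{n,k}(s)D'_{n,k}(s)$. This matches the paper's argument; the only slips are cosmetic (e.g.\ $D'_{n,k}(s)$ is a single term, not a two-term sum, and the holes contribute one extra source and one extra sink rather than disrupting two endpoint rows).
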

\begin{rmk}
 Observe that if $b$ is odd there exist no horizontally symmetric tilings of
the region $H_{a,b}$. 
\end{rmk}

The following enumerative result counts the number of tilings of the
weighted region $\HPreg{a}{b}{k}$ defined in the previous section.

\begin{thm}\label{thm:HorzWExact}
 The number of rhombus tilings of
$\HPreg{n}{2m}{k}$, the upper half
of
the region $\Hreg{n}{2m}{k}$ containing
lozenges of
weight 2 lying along the horizontal symmetry axis is 
$$\left[\displaystyle\sum_{s=1}^{m}B_{n,k}(s)\cdot
E_{n,k}(s)\right]\cdot ST(n,2m),$$
where $B_{n,k}(s)$ is defined as in Theorem~\ref{thm:VertExact} and
$$E_{n,k}(s)=\frac{(-1)^{s+1} (2 s-2)! (-k+n+1)! (n+s-1)!
\left(\frac{k}{2}+\frac{n}{2}+s-2\right)!}{(s-1)!
\left(\frac{n}{2}-\frac{k}{2}\right)! \left(\frac{k}{2}+\frac{n}{2}-1\right)!
(n+2 s-2)! \left(-\frac{k}{2}+\frac{n}{2}+s\right)!}.$$
Similarly, the number of tilings of $\HPreg{n+1}{2m-1}{k}$ is equal to
$2\cdot M(\HPreg{n}{2m}{k})$.
\end{thm}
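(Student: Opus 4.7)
The plan is to follow the template already employed elsewhere in the paper for the other exact enumeration results (namely Theorems~\ref{thm:VertExact} and~\ref{thm:HorzExact}): encode tilings as non-intersecting lattice paths, apply Lindstr\"om--Gessel--Viennot to obtain a determinant, factor $ST(n,2m)$ out of that determinant, and evaluate what remains via a guessed $LU$-decomposition. Concretely, I would first set up the usual bijection between unit rhombus tilings and families of non-intersecting lattice paths, where starting points correspond to unit triangles on one zig-zag edge of $\HPreg{n}{2m}{k}$ together with the unit triangles exposed by the right-pointing hole $\triangleright_k$, and ending points correspond to the analogous triangles on the opposite edge together with those exposed by $\triangleleft_k$. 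The weight-$2$ unit rhombi along the bottom of $\HPreg{n}{2m}{k}$ (inherited from the Matchings Factorization applied to \Hreg{a}{b}{k}) translate, through the Ciucu--Krattenthaler mechanism, into a doubling of those path steps crossing the horizontal symmetry axis.

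Next I would apply LGV to obtain Proposition~\ref{prop:HorzWMat} and thereby express $M(\HPreg{n}{2m}{k})$ as the determinant of a matrix whose entries are (weighted) binomial path counts. This matrix is naturally bordered: an $m\times m$ block corresponds to the hole-free half-hexagon and, by Krattenthaler's formula quoted just before Theorem~\ref{thm:VertExact}, has determinant equal to $ST(n,2m)$; two additional rows/columns encode the interaction of the non-intersecting paths with the two triangular holes. Pulling this block out as a common factor reduces the computation to evaluating a much smaller $2\times 2$-type determinant $\Delta_{n,k,m}$, which one predicts, by comparison with the claimed formula, to equal $\sum_{s=1}^{m} B_{n,k}(s)\,E_{n,k}(s)$.

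To evaluate $\Delta_{n,k,m}$ I would compute small cases ($m=1,2,3$), guess explicit rational expressions for the entries of the lower-triangular factor $L$ and the upper-triangular factor $U$ of the reduced matrix, and then state these as Lemma~\ref{thm:HorzWLU}. The verification that $LU$ reproduces the reduced matrix entry by entry is the main obstacle: each entry of $LU$ is a one-parameter hypergeometric sum that must telescope to a closed-form product, and I expect the required identities to be terminating ${}_3F_2$ summations of Pfaff--Saalsch\"utz type, possibly combined with contiguous relations. Once $L$ and $U$ are verified, $\Delta_{n,k,m}$ is simply $\prod_i U_{ii}$ multiplied against the appropriate row of $L^{-1}$, and the factorisation $B_{n,k}(s)\,E_{n,k}(s)$ should read off immediately from the diagonal entries of $U$ and the rightmost column (or equivalent) of $L$.

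For the second assertion, that $M(\HPreg{n+1}{2m-1}{k})=2\,M(\HPreg{n}{2m}{k})$, the cleanest approach is a direct forcing argument: the parity change from even to odd $b$ (and the corresponding shift in $a$) forces a single zig-zag strip of rhombi along one boundary of $\HPreg{n+1}{2m-1}{k}$, with one of these forced rhombi lying on the weight-$2$ horizontal axis. Removing this forced strip yields a tiling of $\HPreg{n}{2m}{k}$, and the single weight-$2$ rhombus contributes the factor of $2$. If the forcing argument turns out to be more delicate than this sketch suggests, the alternative is to rerun the LGV/$LU$ analysis for the odd parameters and observe that the resulting determinant equals twice the even-parameter one; this is parallel to the odd case of Theorem~\ref{thm:VertExact}, whose $LU$-decomposition is asserted separately (Theorem~\ref{thm:VertOddLU}), and so fits naturally into the existing framework.
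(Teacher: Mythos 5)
Your proposal follows essentially the same route as the paper: Proposition~\ref{prop:HorzWMat} via the Lindstr\"om--Gessel--Viennot theorem with the reflection/weighting argument of Ciucu--Krattenthaler, evaluation of $\det(G^+)$ by a guessed-and-verified $LU$-decomposition whose first $m$ diagonal entries of $U$ reassemble into $ST(n,2m)$, and a forced-lozenge argument (with the factor $2$ coming from the forced weight-$2$ rhombus on the symmetry axis, i.e.\ the start point on $y=x$) for the odd case, exactly as in Remark~\ref{rmk:WHorzOdd}. The only economy you miss is that $G^+$ coincides with the matrix $\widehat{F}$ from the vertical case in all but the last column, so the paper reuses $\widehat{L}$ from Theorem~\ref{thm:VertLU} and reduces the entire verification of Lemma~\ref{thm:HorzWLU} to the single term-by-term identity $A_{n}(i,s)\cdot E_{n,k}(s)=B_{n,k}(s)\cdot C_{n}(s,i)$, rather than running a fresh battery of hypergeometric summations.
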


\begin{rmk}
By re-writing the above result in terms of Theorem~\ref{thm:VertCor} and
combining it with Theorem~\ref{thm:HorzExact} via
Proposition~\ref{thm:MatchFac}, one may construct a factorisation for
$\Hreg{a}{b}{k}$ that is in a sense similar to that of Ciucu and
Krattenthaler~\cite{KrattFact}. Indeed, it was the observation of this
factorisation that lead to the results presented in the current work.
\end{rmk}

MacMahon proved in~\cite{CombAnal} that the total number of plane
partitions of an $a\times b\times c$ box (equivalently, the number of rhombus
tilings of the hexagon $H_{a,b,c}$) is given by the formula
$$T(a,b,c)=\prod _{i=1}^a \prod _{j=1}^b \prod _{k=1}^c
\frac{i+j+k-1}{i+j+k-2}.$$
Combining Theorem~\ref{thm:HorzExact} with Theorem~\ref{thm:HorzWExact}
via Proposition~\ref{thm:MatchFac} gives a similar formula for enumerating
tilings of $\Hreg{a}{b}{k}$. 

\begin{thm}\label{thm:HExact}
 The total number of tilings of the region
$\Hreg{n}{2m}{k}$ is
 \begin{equation*}
  \left[\sum_{s=1}^{m}B_{n,k}'(s)\cdot
D_{n,k}'(s)\right]\times\left[\displaystyle\sum_{t=1}^{m}B_{n,k}(t)\cdot
E_{n,k}(t)\right]\times
T(n,2m,n),
 \end{equation*}
 whilst the number of tilings of $\Hreg{n}{2m-1}{k}$ is
 $$\left[\displaystyle\sum_{s=1}^{m}B_{n-1,k}(s)\cdot
E_{n-1,k}(s)\right]\times\left[
\sum_ { t=1 } ^ { m-1 } B_ { n+1 , k }'(t)\cdot
D_{n+1,k}'(t)\right]\times T(n,2m-1,n).$$
\end{thm}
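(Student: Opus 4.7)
The plan is to combine Proposition~\ref{thm:MatchFac} (Matchings Factorization) with the exact enumeration formulas already established in Theorem~\ref{thm:HorzExact} and Theorem~\ref{thm:HorzWExact}. The only additional ingredient needed is the product identity
$$T(n,2m,n)=ST(n,2m)\cdot TC(n,2m),$$
which itself follows by applying Proposition~\ref{thm:MatchFac} to an empty hexagon: with no holes present the two factors reduce to the tiling counts of $H^-_{n,2m}$ and $\xoverline{H}^+_{n,2m}$, and the latter equals $M(V_{n,2m})=ST(n,2m)$ by~\cite{KrattFact}.

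For the even case $b=2m$, Proposition~\ref{thm:MatchFac} gives
$$M(\Hreg{n}{2m}{k})=M(\HMreg{n}{2m}{k})\cdot M(\HPreg{n}{2m}{k}).$$
I would substitute the formula from Theorem~\ref{thm:HorzExact} for the first factor (which carries $TC(n,2m)$) and the formula from Theorem~\ref{thm:HorzWExact} for the second (which carries $ST(n,2m)$), then invoke the product identity above to collect the two ``hexagon'' pieces into $T(n,2m,n)$, leaving the two sums intact as in the statement.

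For the odd case $b=2m-1$, Proposition~\ref{thm:MatchFac} again factors the count. Since $b$ is odd, the equivalence recorded in Theorem~\ref{thm:HorzExact} must be used with shifted parameters: $M(\HMreg{n}{2m-1}{k})$ coincides with the horizontally symmetric tiling count of $\Hreg{n+1}{2(m-1)}{k}$, contributing the factor
$$\left[\sum_{t=1}^{m-1}B'_{n+1,k}(t)\,D'_{n+1,k}(t)\right]\cdot TC(n+1,2(m-1)).$$
The second statement of Theorem~\ref{thm:HorzWExact} supplies
$$M(\HPreg{n}{2m-1}{k})=2\cdot\left[\sum_{s=1}^{m}B_{n-1,k}(s)\,E_{n-1,k}(s)\right]\cdot ST(n-1,2m).$$
Multiplying these and invoking the analogous product identity
$$2\cdot ST(n-1,2m)\cdot TC(n+1,2(m-1))=T(n,2m-1,n)$$
then yields the claimed formula.

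The main obstacle lies in justifying the two product identities that relate $T$, $ST$ and $TC$ in the even and odd cases. Both are identities of finite products of linear factors which can be verified by direct telescoping, but the cleanest approach is to derive them uniformly by applying Proposition~\ref{thm:MatchFac} to an empty holey hexagon $H_{n,b,n}$, thereby avoiding any ad hoc product manipulation and exhibiting the result as a structural consequence of the same factorisation theorem used throughout the paper.
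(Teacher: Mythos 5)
Your proposal is correct and follows exactly the route the paper takes: the paper's entire ``proof'' of Theorem~\ref{thm:HExact} is the one-line remark that it follows from combining Theorem~\ref{thm:HorzExact} and Theorem~\ref{thm:HorzWExact} via Proposition~\ref{thm:MatchFac}, and you have correctly supplied the details, including the parameter shifts in the odd case and the product identities $T(n,2m,n)=ST(n,2m)\,TC(n,2m)$ and $T(n,2m-1,n)=2\,ST(n-1,2m)\,TC(n+1,2m-2)$, which do indeed follow structurally from applying the factorisation to a hole-free hexagon together with the forced-lozenge reductions already recorded in the paper.
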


\begin{rmk}
 If $k=n$, then the first formula in the above theorem gives the number of
rhombus tilings of the hexagon $H_{n,2m}$ containing two unit triangular dents
in each vertical side, positioned symmetrically with respect to the horizontal
symmetry axis. This is a special case of the more general class of
rhombus tilings considered by Ciucu and
Fischer in~\cite{Ils4}.
\end{rmk}

The following two sections are dedicated to proving Theorem~\ref{thm:VertExact},
Theorem~\ref{thm:HorzExact} and Theorem~\ref{thm:HorzWExact} by translating
sets of rhombus tilings into families of non-intersecting paths
(Section~\ref{sec:NonInt}). The enumeration of these families of paths can
be expressed as the determinant (or Pfaffian) of
certain matrices. These determinants and Pfaffians are then evaluated in
Section~\ref{sec:eval}.

\section{Non-intersecting Lattice Paths}\label{sec:NonInt}
The aim of this section is to express the number of rhombus tilings of
$\Vreg{a}{b}{k}$, $\HPreg{a}{b}{k}$ and $\HMreg{a}{b}{k}$
as determinants (or Pfaffians) of particular matrices.

Consider first the region $\Vreg{a}{b}{k}$ described in
Section~\ref{sec:SetUp}. Tilings of this region may be represented by unique
families of
lattice paths across dimers (see Figure~\ref{fig:DimerPath}, left), which
may
in turn be translated into unique families of non-intersecting lattice paths
consisting of north and east unit steps on the integer lattice with origin $O'$
(see Figure~\ref{fig:DimerPath}, right).
Counting rhombus tilings of $\Vreg{a}{b}{k}$ is therefore equivalent to counting
the number of families $(P_1,P_2,...,P_{b})$ of non-intersecting lattice paths
between a set
of starting points $A=\{(-s,s):1\le s\le b\}$ and a set of end points
$I=\{(x,y)\in\mathbb{Z}^2:x+y=a\}\cup\{t^-,t^+\}$, with the requirement
that $t^+=((a-b-k)/2-1,(a+b-k)/2+1)$ and $t^-=((a-b-k)/2,(a+b-k)/2)$ must be
included as end
points. Figure~\ref{fig:DimerPath} shows the unique family of lattice paths
corresponding to the tiling found in Figure~\ref{fig:VertSym}.

By the well-known theorem of Stembridge~\cite{Stemb},
such families of
non-intersecting paths may be expressed in terms of a \emph{Pfaffian}, defined
below.

\begin{defn}
 The Pfaffian of a $2n\times2n$ skew-symmetric matrix $A$ is
 $$\Pf (A)=\displaystyle\sum_{\pi\in\mathcal{M}_{2n}}\sgn(\pi)\prod_
{i<j\textrm{ matched in }\pi}(A)_{i,j},$$
where $\mathcal{M}_{2n}$ denotes the set of all perfect matchings on
the
set of vertices $\{1,2,\dots,2n\}$ and sgn$(\pi)=(-1)^{\textrm{cr}(\pi)},$ where
cr$(\pi)$ is the number of crossings of $\pi$. 
\end{defn}
\begin{rmk}\label{rmk:Pfaf}It is a well known fact that 
$$\Pf(A)^2=\det(A).$$\end{rmk}
\begin{figure}[t!]
\centering
 \includegraphics[scale=0.65]{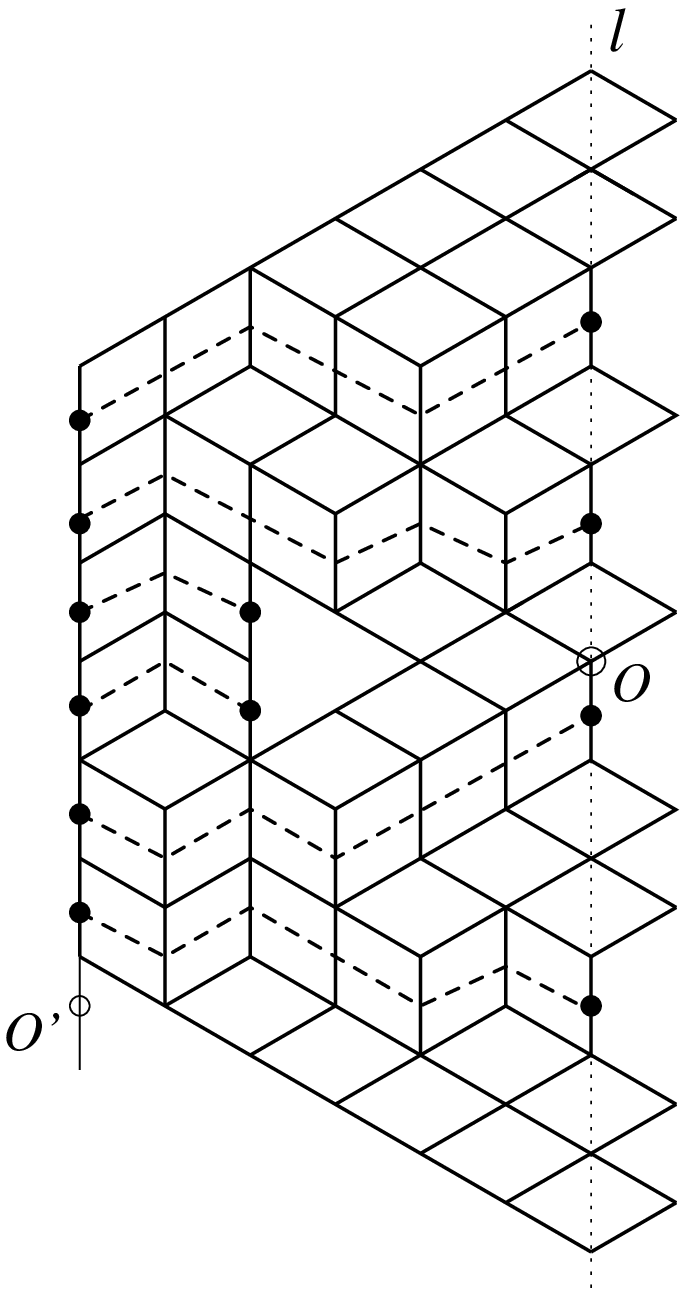}\qquad\includegraphics[scale=0.65]
{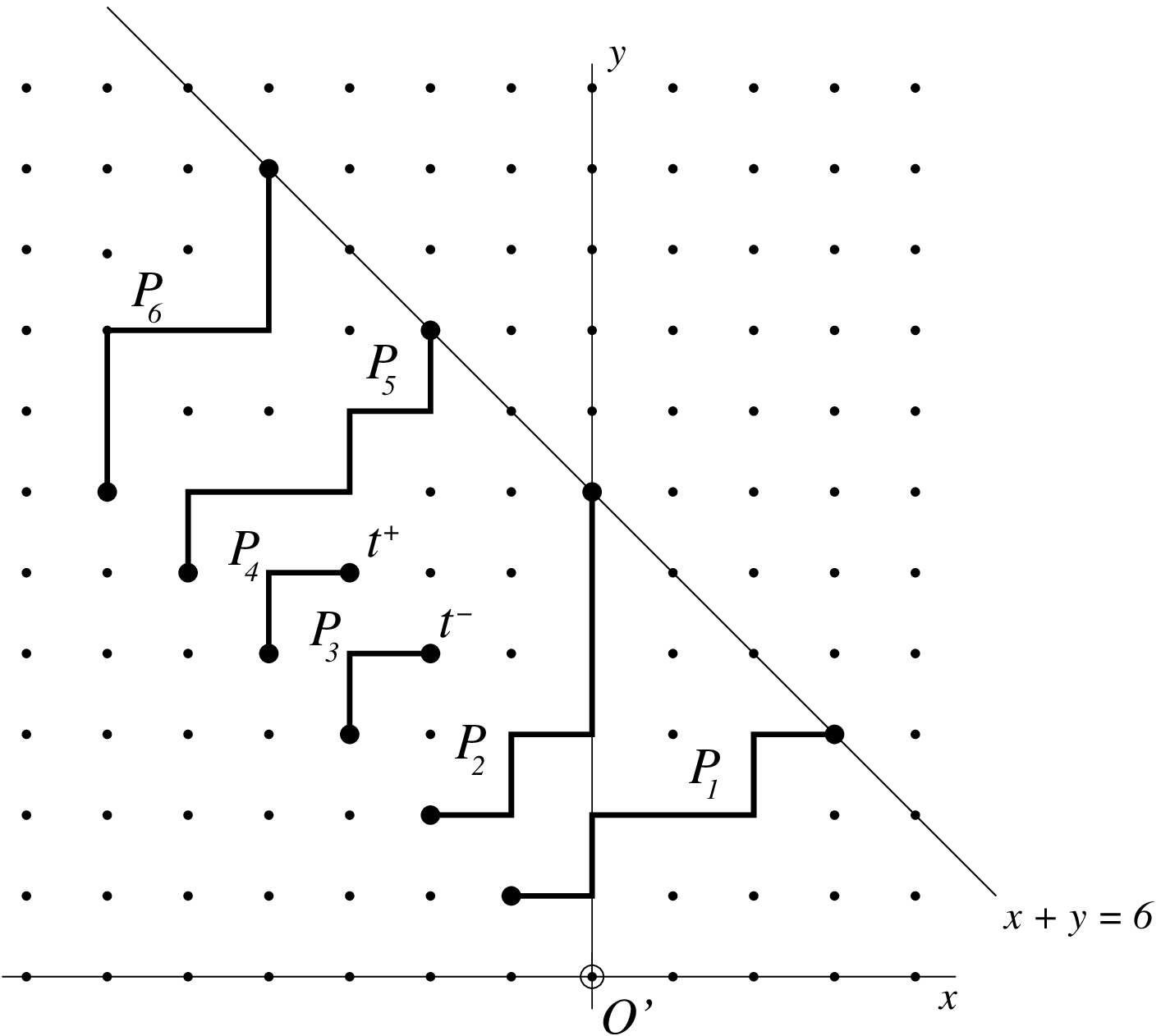} \caption{Left: the tiling from Figure~\ref{fig:VertSym} as lattice
paths
across dimers. Right: the
translation of this tiling to a family of non-intersecting paths in
$\mathbb{Z}^2$.}
\label{fig:DimerPath}
\end{figure}
\begin{thm}[Stembridge]\label{thm:StemOk}
Let $V=\{v_1,...,v_r\}$, $U=\{u_1,...,u_p\}$ and $E=\{e_1,...,e_q\}$ be
finite sets of lattice points in the integer lattice $\mathbb{Z}^2$ and assume
that $p+r$ is even. Denote by $\mathscr{P}(A\rightarrow B)$ the
total number of families of non-intersecting paths from the set $A$ to the set
$B$, whilst $P(a\rightarrow b)$ denotes the number of paths starting at point
$a$ and ending at point $b$. If $E$ is totally ordered in such a way that $U$ is
D-compatible\footnote{This is a technical condition on the order in which the
start points are connected to the end points in each family of
paths.} with $V\oplus E$ (disjoint union), then the total number of
non-intersecting paths starting at $U$ and ending at $v\oplus E$ is
$$\mathscr{P}(U\rightarrow v\oplus E)=\emph{Pf}\begin{pmatrix}
                                                   Q&Y\\-Y^t&0
                                                  \end{pmatrix},$$
where $Q=(Q_{i,j})_{1\le i,j\le p}$ is given by
$$Q_{i,j}=\sum_{1\le s<t\leq q}(P(u_i\rightarrow
e_s)P(u_j\rightarrow e_t)-P(u_i\rightarrow
e_t)P(u_j\rightarrow e_s)),$$
and $Y=(Y_{i,j})_{i\in\{1,\dots,p\},j\in\{1,\dots,r\}}$ is given by
$$Y_{i,j}=P(u_i\rightarrow v_{j}).$$
\end{thm}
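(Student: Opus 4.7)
The plan is to follow Stembridge's original strategy, namely a Pfaffian analogue of the Lindström--Gessel--Viennot involution argument. The idea is to expand the Pfaffian combinatorially, identify each term with a (possibly intersecting) system of paths carrying some signed weight, and then cancel all intersecting systems via a sign-reversing involution, leaving only the non-intersecting ones.

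First I would expand the right-hand side by the definition of the Pfaffian as a signed sum over perfect matchings $\pi$ of $\{1,\dots,p+r\}$. Because of the zero block in the lower right corner of the matrix, only those matchings survive in which every element of $\{p+1,\dots,p+r\}$ is paired with some element of $\{1,\dots,p\}$. Such a matching decomposes into pairs $(i,j)$ wholly inside $\{1,\dots,p\}$, each contributing a factor $Q_{i,j}$, and mixed pairs $(i,p+j)$ contributing $Y_{i,j}=P(u_i\to v_j)$. Substituting the definition of $Q_{i,j}$ then expands each $Q$-factor as a signed sum over ordered pairs $(e_s,e_t)$ with $s<t$ together with two paths, one from $u_i$ and one from $u_j$, ending at these two points.

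Combining these expansions, $\Pf\begin{pmatrix}Q&Y\\-Y^{t}&0\end{pmatrix}$ rewrites as a single signed sum indexed by data consisting of: a partition of $\{u_1,\dots,u_p\}$ into pairs and singletons (the singletons being linked to distinct elements of $V$ as prescribed by $\pi$); an assignment of a pair of distinct endpoints in $E$ to each matched pair of $u_i$'s; and a choice of paths realising all of these. The overall sign on each such configuration is the product of $\sgn(\pi)$ with the sign coming from whichever of the two $Q_{i,j}$ summands was chosen.

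The core combinatorial step is then to define a sign-reversing involution on those configurations in which at least two paths meet: locate the first intersection point (in a fixed total order on lattice points) and swap the tails of the two paths that cross there. This reverses the sign coming from the Pfaffian expansion (either within a single $Q$-factor when the two swapped endpoints both lie in $E$, or via a change in the matching $\pi$ when the swap interchanges endpoints between different factors or involves an endpoint in $V$). The main obstacle, and the only genuinely subtle point, is to verify that this tail-swapping preserves the structural constraints of the configuration in every case — intersections within a single $Q$-pair, across two $Q$-pairs, between a $Q$-pair and a $Y$-path, and between two $Y$-paths. The D-compatibility hypothesis on $E$ is precisely the condition that makes this case analysis go through cleanly: it guarantees both that the involution is well-defined and that in a non-intersecting configuration the induced endpoint assignment is forced, so that each non-intersecting family of paths from $U$ to $v\oplus E$ contributes to exactly one surviving term with sign $+1$. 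Summing over the surviving configurations yields $\mathscr{P}(U\to v\oplus E)$, as required.
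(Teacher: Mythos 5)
The paper does not prove this statement: it is imported verbatim from Stembridge~\cite{Stemb} as an external tool, so there is no in-paper argument to compare yours against. Your sketch does follow Stembridge's original route --- expand the Pfaffian over perfect matchings, note that the zero block forces every index in $\{p+1,\dots,p+r\}$ to be matched into $\{1,\dots,p\}$, expand each $Q_{i,j}$ into signed pairs of paths into $E$, and cancel intersecting configurations by a tail-swapping involution --- and as an outline it is sound.

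Two points are glossed over. First, when you expand the product of the $Q_{i,j}$-factors, you obtain terms in which distinct $Q$-pairs select the \emph{same} endpoint $e_s$, i.e.\ two paths of the configuration terminate at the same lattice point; these terms must also be cancelled by the involution, and your phrase ``at least two paths meet'' silently absorbs them without comment. Second, the role you assign to D-compatibility is slightly off: the tail-swapping involution is well-defined and sign-reversing without any such hypothesis (this is the part of the argument that is purely local). What D-compatibility actually buys is the endgame --- it guarantees that every surviving non-intersecting configuration carries sign $+1$ and that each non-intersecting family of paths from $U$ to $V\oplus E$ is counted by exactly one perfect matching $\pi$ and one choice of ordered endpoints, so that the surviving sum equals $\mathscr{P}(U\rightarrow V\oplus E)$ on the nose rather than some signed variant of it. Finally, the case analysis you explicitly defer (intersections within one $Q$-factor, across two $Q$-factors, between a $Q$-path and a $Y$-path, and between two $Y$-paths, each requiring a different bookkeeping of how $\sgn(\pi)$ and the internal sign of $Q_{i,j}$ change) is genuinely the bulk of Stembridge's proof; as written, your text asserts rather than verifies that the signs reverse correctly in each case, so the proposal is a correct plan but not yet a proof.
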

\begin{rmk}\label{rmk:StemOk}
 In case $p+r$ is odd, a phantom vertex $u_{p+1}=e_{q+1}$ may be adjoined with
the property that $$P(u_s\to e_{q+1})=\begin{cases}1,& s=p+1,\\0,&
otherwise.\end{cases}$$
Then it may be shown that $\mathscr{P}(U\cup u_{p+1}\to (v\oplus E)\cup
e_{q+1})$ is given by
$$\Pf \begin{pmatrix}Q&Y\\-Y^t&0\end{pmatrix},$$ where $Y=(Y_{i,j})_{1\le i\le
p+1,1\le j\le r}$ is as above with the added condition that $Y_{p+1,j}=0$ for
all $j$, and $Q=(Q_{i,j})_{1\le i,j\le p+1}$ is given by
$$Q_{i,j}=\sum_{1\le s<t\le q+1}(P(u_i\rightarrow
e_s)P(u_j\rightarrow e_t)-P(u_i\rightarrow
e_t)P(u_j\rightarrow e_s)).$$
\end{rmk}
\begin{rmk}Throughout this article sums in which the starting index
is not necessarily smaller than the ending index are interpreted according to
the following standard convention:

\begin{equation}
 \sum_{r=m}^{n-1}\textrm{Exp}(r)=\begin{cases}\sum_{r=m}^{n-1}\textrm{Exp}
(r)&n>m\\0&n=m\\-\sum_{r=n}^{m-1}\textrm{Exp}(r)&n<m.

\end{cases}
\end{equation}
\end{rmk}

Theorem~\ref{thm:StemOk} may be used to count families of non-intersecting paths
that
correspond to vertically symmetric tilings of holey hexagons.

\begin{prop}\label{prop:VertMat}
 The number of vertically symmetric tilings
of the hexagon $\Hreg{n}{2m}{k}$ is the
Pfaffian of the skew-symmetric matrix $F=(f_{i,j})_{1\le i,j\le 2m+2}$ defined
by
$$f_{i,j}=\begin{cases}\sum_{r=i-j+1}^{j-i}{2n\choose
n+r},&1\le i< j\le 2m,\\
{n-k\choose (n-k)/2-m+i},&i\in\{1,...,2m\}, j=2m+1,\\
{n-k\choose (n-k)/2-m-1+i},&i\in\{1,...,2m\},j=2m+2,\\
0,&2m+1\le i<j\le 2m+2,\end{cases}$$
whilst the number of vertically symmetric
tilings of $\Hreg{n}{2m-1}{k}$ is the Pfaffian of the
skew-symmetric
matrix $F^*=(f^*_{i,j})_{1\le i,j\le 2m+2}$ defined by
$$f^*_{i,j}=\begin{cases}\sum_{r=i-j+1}^{j-i}{2n\choose
n+r},&1\le i< j\le 2m-1,\\2^n, &i\in\{1,...,2m-1\},
j=2m\\
{n-k\choose (n-k+1)/2-m+i},&i\in\{1,...,2m-1\}, j=2m+1,\\
{n-k\choose (n-k-1)/2-m+i},&i\in\{1,...,2m-1\},j=2m+2,\\
0,&2m+1\le i<j\le 2m+2.\end{cases}$$

\end{prop}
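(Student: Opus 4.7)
The plan is to translate the count into non-intersecting lattice paths and apply Stembridge's theorem (Theorem~\ref{thm:StemOk}). A tiling of $\Vreg{n}{2m}{k}$ corresponds bijectively to a family of $2m$ non-intersecting lattice paths with unit north and east steps, via the standard lattice-paths-across-dimers construction illustrated in Figure~\ref{fig:DimerPath}. These paths run from a fixed set $U=\{u_1,\dots,u_{2m}\}$ of starting points along the south-west boundary of $V_{n,2m}$ to endpoints in $V\cup E$, where $V=\{t^-,t^+\}$ is the pair of endpoints forced by the right-pointing hole $\triangleright_k$, and $E$ is the set of free lattice points on the vertical free boundary $l$ at which a path may terminate. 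With the natural left-to-right order on $E$ the D-compatibility hypothesis holds, so Theorem~\ref{thm:StemOk} gives the count as the Pfaffian of a $(2m+2)\times(2m+2)$ skew-symmetric matrix with the block structure $\bigl(\begin{smallmatrix}Q & Y\\ -Y^t & 0\end{smallmatrix}\bigr)$, with $p=|U|=2m$ and $r=|V|=2$.

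The two rightmost columns of this Pfaffian (the $Y$-block) are read off at once: both $t^\pm$ lie on the line $x+y=n-k$ while each $u_i$ lies on $x+y=0$, so each $P(u_i\to t^\pm)$ is a single binomial coefficient with upper index $n-k$. Substituting the explicit coordinates of $t^-$ and $t^+$ yields exactly the entries $f_{i,2m+1}$ and $f_{i,2m+2}$ of the statement.

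The bulk of the work is in simplifying the $Q$-block. Each entry $Q_{i,j}$ with $1\le i<j\le 2m$ is a double sum of products of two $\binom{n}{\cdot}$-type binomials over pairs $s<t$ of free endpoints, and must be massaged into the single sum $\sum_{r=i-j+1}^{j-i}\binom{2n}{n+r}$ appearing in the proposition. My approach is first to use the antisymmetry of the summand to rewrite it as a signed sum over all ordered pairs of endpoints; then to collapse one of the two summations via the Chu--Vandermonde identity $\sum_s\binom{n}{s}\binom{n}{M-s}=\binom{2n}{M}$, which produces a partial sum of central binomials $\binom{2n}{\cdot}$; and finally to reindex the remaining single sum by the difference between the two endpoint indices to obtain exactly the interval $i-j+1\le r\le j-i$. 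Careful bookkeeping with the ranges of summation and with the sign conventions built into the Pfaffian is required, and this step is in my view the main technical obstacle of the proof.

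For the odd case $b=2m-1$, one has $|U|+|V|$ odd and invokes the phantom vertex construction of Remark~\ref{rmk:StemOk}. The new column of the augmented $Q$-matrix records the row sums $\sum_sP(u_i\to e_s)$, i.e.\ the total number of north/east lattice paths of length $n$ from $u_i$ to the free boundary; summing the binomials $\binom{n}{\cdot}$ over all possible lower indices yields $2^n$, which is precisely the entry $f^*_{i,2m}=2^n$ asserted. All remaining entries of $F^*$ are computed in exactly the same manner as those of $F$, with only index shifts reflecting the parity change of $b$.
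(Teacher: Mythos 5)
Your proposal follows essentially the same route as the paper's proof: translate tilings of the half-hexagon into non-intersecting lattice paths, apply Stembridge's theorem with $U=A$, $V=\{t^-,t^+\}$ and $E$ the free endpoints, read off the $Y$-block as single binomials $\binom{n-k}{\cdot}$, simplify the $Q$-block via Chu--Vandermonde and reindexing to obtain $\sum_{r=i-j+1}^{j-i}\binom{2n}{n+r}$, and handle the odd case by the phantom-vertex construction whose extra column gives the row sums $\sum_s P(u_i\to e_s)=2^n$. This matches the paper's argument step for step, so the approach is correct and not materially different.
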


\begin{proof}
 Consider first the region $\Hreg{n}{2m}{k}$. Let $U=A$, $V=\{t^-,t^+\}$ and
$E=I\setminus v$, and $F$ be the resulting
matrix in Theorem~\ref{thm:StemOk}. Then by
the simple fact that
$P((a,b)\rightarrow(c,d))={c-a+d-b\choose c-a}$, it should be clear that the
proposition holds for $j=2m+1$ and $j=2m+2$. For
$1\le i<j\le 2m$,
\begin{align*}f_{i,j}&=\sum_{1\le s<t\le
2m+n}\left({n\choose s-i}{n\choose t-j}-{n\choose s-j}{n\choose
t-i}\right)\\&=\sum_{t=1}^{n+2m}\sum_{s=1}^{2m+n}\left({
n\choose n-s+i } { n\choose s+t-j}-{n\choose n-s+j}{n
\choose s+t-i}\right)\\
&=\sum_{t=1}^{n+2m}\left({2n\choose n+t+i-j}-{2n\choose
n+t+j-i}\right)\\
&=\sum_{r=i-j+1}^{j-i}{2n\choose n+r},
\end{align*}
where the Chu-Vandermonde convolution has been used in the second line.

Next consider the region $\Hreg{n}{2m-1}{k}$. Let the point
$A_{m+1}=I_{n+m+1}=(-1,0)$ be a
phantom vertex as described in Remark~\ref{rmk:StemOk}. Let $U=A\cup A_{m+1}$,
$V=\{t^-,t^+\}$ and $E=(I\setminus v)\cup I_{n+m+1}$ in Theorem
\ref{thm:StemOk} and let $F^*$ be the resulting matrix. Then $F^*$ agrees with
the matrix $F$ (the determinant of which counts tilings of $\Vreg{n}{2m}{k}$)
everywhere except in the $(2m)$-th row and column, where
 \begin{align*}f^*_{i,2m}&=\sum_{s=1}^{n+m}P(u_i\to e_s)\\&=2^n,\end{align*} and
\begin{align*}f^*_{2m,j}&=-\sum_{s=1}^{n+m}P(u_j\to e_s)\\&=-2^n,\end{align*}
for $1\le i,j\le 2m-1$. This
concludes the proof. 
\end{proof}
Consider now the region $\HMreg{n}{2m}{k}$.
Observe
that in any tiling of $\HMreg{n}{2m}{k}$ the left and right most vertical sides
comprise a set of forced rhombi (Figure~\ref{fig:HorzPath}, left, illustrates
such a forcing). Hence it follows that
$$M(\HMreg{n-1}{2m+1}{k})=M(\HMreg{n}{2m}{k}),$$ so it suffices to
enumerate tilings of the region \HMreg{n}{2m}{k}.

Consider then the region
$\Hreg{n}{2m}{k}$ for some integer $m$.
Once again, by translating sets of tilings to sets of lattice paths across
dimers, sets of tilings of \HMreg{n}{2m}{k} correspond to families of
non-intersecting paths beginning at a set of start points
$A=\bigcup_{s=1}^{m+1}A_s$, where $A_s=(s,1-s)$ for $s\in\{1,\dots,m\}$ and
$A_{m+1}=((n+k)/2+1,(n+k)/2)$, and ending at $I=\bigcup_{t=1}^{m+1}I_t$, where
$I_t=(n+t,n+1-t)$ for $t\in\{1,\dots,m\}$ and $I_{m+1}=((n-k)/2+1,(n-k)/2)$,
with the condition that no path intersects the line $y=x$ (see
Figure~\ref{fig:HorzPath}). With this labelling
the number of tilings of
$\HMreg{n}{2m}{k}$ is equal to the number
of families of non-intersecting paths $(P_1,P_2,\dots,P_{m+1})$ where $P_{i}$
denotes a path from $A_{i}$ to $I_{\sigma(i)}$ and $\sigma=(1,m+1)$ is the
permutation on $m+1$ elements.      
\begin{figure}[t]
 \centering
 \includegraphics[scale=0.65]{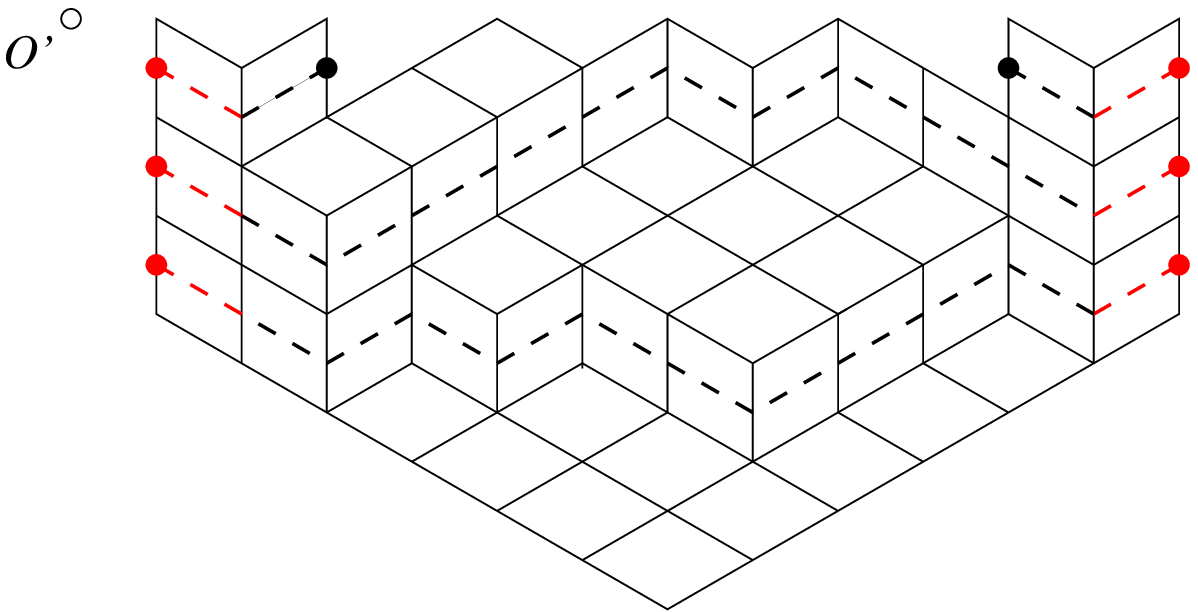}\qquad\includegraphics[scale=0.65]
{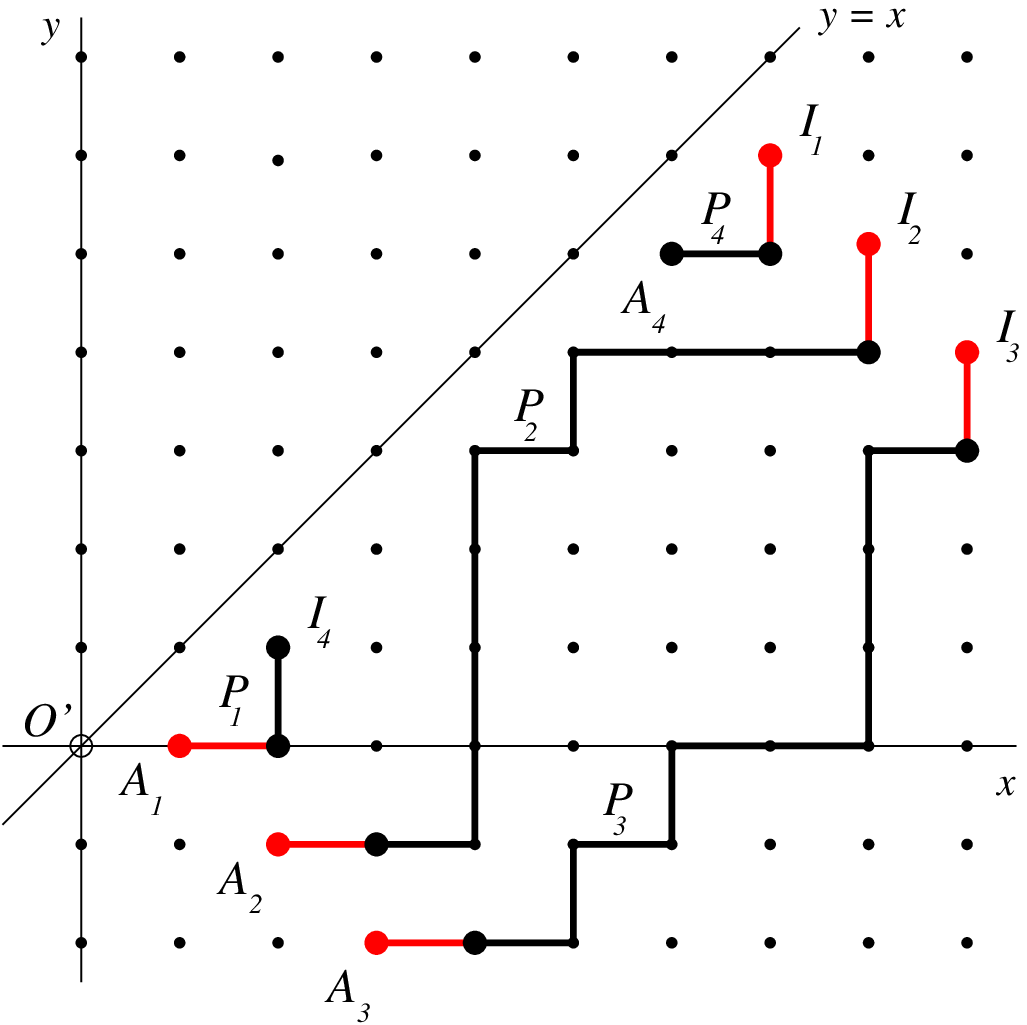} \caption{Left: A tiling of
$\HMreg{6}{6}{4}$. Right: The
corresponding set of lattice paths. Lines and dashed lines marked in red
highlight forced dimers on the vertical sides.}
\label{fig:HorzPath}
\end{figure}
Such families of paths may be counted directly using the well-known theorem of 
Lindstr\"{o}m-Gessel-Viennot~\cite{LGV}, recalled below.

Let $G=(V,E)$ be a locally directed acyclic graph, that is, every vertex has
finite degree and $G$ contains no directed cycles. Consider base vertices
$B=\{b_1,b_2,\dots,b_n\}$ and destination vertices $D=\{d_1,d_2,\dots,d_m\}$
and assign to each directed edge $e$ a weight, $\omega_e$, belonging to some
commutative ring. For each directed path $P:b\to d$, let
$\omega(P)$ be the product of the weights of the edges of the path.
For any two vertices $b$ and $d$, let $e(b,d)=\sum_{P:b\to
d}\omega(P)$ denote the sum over all paths from $b$ to $d$. If we
assign the weight 1 to each edge then $e(b,d)$ counts the number of paths
from $b$ to $d$. Let $\mathcal{P}$ denote the $n$-tuple of paths
$(P_1,P_2,\dots,P_n)$ from $B$ to $D$ that satisfy the following properties:

\begin{itemize}
 \item There exists a permutation $\sigma$ of $\{1,2,\dots,n\}$ such that for
every $i$, $P_i$ is a path running from $b_i$ to $d_{\sigma(i)}$.
\item For any $i\neq j$, the paths $P_i$ and $P_j$ have no common vertices.

\end{itemize}
Given such an $n$-tuple, denote by $\sigma(\mathcal{P})$ the permutation
from the first condition above.

Consider now the matrix $N$ with $(i,j)$-entry equal to $e(b_i,d_j)$. The
Lindstr\"{o}m-Gessel-Viennot Theorem then states that the determinant of $N$ is
the signed sum over all $n$-tuples $\mathcal{P}=(P_1,P_2,\dots,P_n)$ of
non-intersecting paths from $B$ to $D$,

$$\det(N)=\sum_{\mathcal{P}:B\to D}\sgn(\sigma(\mathcal{P}))\prod_{i=1}
^n\omega(P_i).$$

That is, the determinant of $N$ counts the weights of all sets of
non-intersecting paths from $B$ to $D$, each affected by the sign of the
corresponding permutation $\sigma$ given by $P_i$ taking $b_i$ to
$d_{\sigma(i)}$. Indeed if the weights of each edge are 1 and the only
permutation possible is the identity then $\det(N)$ counts exactly
$\mathscr{P}(B\to D)$ (that is, the number
of non-intersecting paths between $B$ and $D$).

\begin{prop}\label{prop:HorzMat}
 The number of horizontally symmetric tilings of
$\Hreg{n}{2m}{k}$ is 
$(-\det(G)),$
where $G=(g_{i,j})_{1\le i,j\le m+1}$ is the matrix given by

$$g_{i,j}=\begin{cases}
\binom{2n}{n+j-i}-\binom{2n}{n-j-i+1}, &1\le i,j\le m,\\
\binom{n-k}{(n-k)/2+1-i}-\binom{n-k}{(n-k)/2-i},&j=m+1,\,1\le i\le
m,\\
\binom{n-k}{(n-k)/2+1-j}-\binom{n-k}{(n-k)/2-j},&i=m+1,\,1\le j\le
m,\\0,&otherwise.\end{cases}$$
\end{prop}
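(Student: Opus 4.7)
The plan is to identify the matrix $G$ as the Lindström-Gessel-Viennot matrix for the non-intersecting lattice path problem set up in the paragraph preceding the statement. From that discussion, tilings of $\HMreg{n}{2m}{k}$ (which, when $b=2m$ is even, coincide with horizontally symmetric tilings of $\Hreg{n}{2m}{k}$) correspond bijectively to non-intersecting families of east/north lattice paths from $\{A_1,\dots,A_{m+1}\}$ to $\{I_1,\dots,I_{m+1}\}$ that avoid the line $y=x$, with the specific permutation $\sigma_0=(1,m+1)$.

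First I would verify, via the reflection principle, that the entries $g_{i,j}$ count single east/north paths from $A_i$ to $I_j$ that stay strictly below $y=x$. For lattice points $A,B$ strictly below $y=x$, such path counts equal $P(A\to B)-P(A'\to B)$, where $A'$ is the reflection of $A$ across $y=x$ and $P(\cdot\to\cdot)$ is a binomial coefficient. Plugging in $A_i=(i,1-i)$, $A_i'=(1-i,i)$ for $i\le m$ and $A_{m+1}=((n+k)/2+1,(n+k)/2)$, $A_{m+1}'=((n+k)/2,(n+k)/2+1)$, together with the analogous expressions for the $I_j$, a routine calculation --- occasionally using $\binom{a}{b}=\binom{a}{a-b}$ --- matches all four cases of $g_{i,j}$ exactly, including $g_{m+1,m+1}=0$ (no east/north path exists from $A_{m+1}$ to $I_{m+1}$, since the latter has strictly smaller coordinates).

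Next I would apply the Lindström-Gessel-Viennot theorem to the directed acyclic graph of east/north edges restricted to lattice points with $y<x$, obtaining
$$\det(G)=\sum_{\sigma}\sgn(\sigma)\,N_\sigma,$$
where $N_\sigma$ counts non-intersecting families with permutation $\sigma$ that avoid $y=x$. The main obstacle is then to show that $N_\sigma=0$ whenever $\sigma\ne\sigma_0$. This is geometric: $A_{m+1}$ lies strictly to the northeast of $A_1,\dots,A_m$ (its $y$-coordinate $(n+k)/2$ rules out all but a handful of $I_j$ as endpoints), and symmetrically $I_{m+1}$ lies strictly to the southwest of $I_1,\dots,I_m$. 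Any non-intersecting east/north family must route $A_{m+1}$ to the endpoint of maximal $y$-coordinate, namely $I_1$, since any other choice either forces the path to touch $y=x$ or collides with a path emanating from some $A_i$ with $i\le m$. The mirror argument at $I_{m+1}$ forces its path to start at $A_1$. Finally, the remaining starts $A_2,\dots,A_m$ and endpoints $I_2,\dots,I_m$ lie on two antidiagonals in strict northwest-to-southeast order, and non-intersection in the east/north partial order then forces the identity matching between them, yielding $\sigma=\sigma_0$.

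Combining these, $\det(G)=\sgn(\sigma_0)\,N_{\sigma_0}=-N_{\sigma_0}$, since the transposition $\sigma_0$ has sign $-1$. By the bijection, $N_{\sigma_0}$ is exactly the number of horizontally symmetric tilings of $\Hreg{n}{2m}{k}$, and this yields the claimed identity $M(\HMreg{n}{2m}{k})=-\det(G)$. The bulk of the work is thus concentrated in the uniqueness-of-permutation argument; the reflection-principle verification of the entries and the LGV application itself are routine.
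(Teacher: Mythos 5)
Your proposal is correct and follows essentially the same route as the paper: apply Lindstr\"om--Gessel--Viennot to the start points $A_i$ and end points $I_j$, use the reflection principle in the line $y=x$ to identify the matrix entries, and observe that the only contributing permutation is the transposition $(1,m+1)$, whose sign accounts for the factor $-1$. The only (immaterial) differences are that you reflect the start points where the paper reflects the end points, and that you spell out the crossing argument forcing $\sigma=(1,m+1)$, which the paper merely asserts.
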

\begin{proof}
 In the result of Lindstr\"om, Gessel and Viennot replace $B$ with the set $A$
and replace $D$ with the set $I$
(defined previously) and let $G$ be the matrix whose $(i,j)$-entry is the
number of
paths from $A_i$ to $I_j$. The only permutation that gives rise to
$(m+1)$-tuples of non-intersecting paths, $\mathcal{P}_{m+1}$, is the
permutation $\sigma=(1,m+1)$ on $m+1$ objects. By
Lindstr\"{o}m-Gessel-Viennot it follows that $\det(G)=-\mathscr{P}(A\to I)$,
where
$\mathscr{P}(X\to Y)$ is as in Theorem~\ref{thm:StemOk}. 

All that remains to be seen is that the entries of $G$ do
indeed count the number of paths between $A_i$ and $I_j$. Care must be taken to
ensure that only those paths that do not intersect the line
$y=x$ are counted.

Given a starting point $a$ and an ending point $b$, let $\mathcal{P}$
denote the set of all paths from $a$ to $b$ (including those that cross the
line $y=x$). Consider a second set of paths $\mathcal{P}'$ obtained by
taking all paths in $\mathcal{P}$ that intersect $y=x$ and reflecting portions
between touching points and the last segment of each path in the line $y=x$,
thereby obtaining all paths from $a$ to $b'$ (where $b'$ is the reflection of
$b$ in $y=x$). Then the difference $|\mathcal{P}|-|\mathcal{P}'|$
gives the total number of paths from $a$ to $b$ that never intersect the line
$y=x$.

Suppose $1\le i,j\le m$. Then the number of paths starting at $A_i$ and ending
at $I_j$ is given by $$P(A_i\to I_j)=P((i,1-i)\to
(n+j,n+1-j)).$$
Letting $I_j'$ denote the reflection of $I_j$ in the line $y=x$, the
number of paths from $A_i$ to $I_j$ that do not intersect the line
$y=x$ is then $$P(A_i\to I_j)-P(A_i\to
I_j')=\binom{2n}{n+j-i}-\binom{2n}{n-j-i+1}.$$
A similar argument holds for the other entries of $G$. This concludes
proof.
\end{proof}

Precisely the same method may be used to enumerate tilings of the region
containing weighted lozenges, $\HPreg{n}{2m}{k}$, with the exception that any
path from
$A$ to $I$ that intersects the line $y=x$ at $t$-many points has a total weight
of $2^t$. It is not hard to convince oneself that
$|\mathcal{P}|+|\mathcal{P}'|$ counts all such paths. Such an argument
appears in~\cite{KrattFact}. Proceeding in exactly the same way as
Proposition~\ref{prop:HorzMat} gives the following:
\begin{prop}\label{prop:HorzWMat}
 The number of tilings of the weighted region
$\HPreg{n}{2m}{k}$ is given by $(-\det(G^+))$, where $G^+=(g^+_{i,j})_{1\le
i,j\le m+1}$ is the
matrix defined by
$$g^+_{i,j}=\begin{cases}
\binom{2n}{n-i-j+1}+\binom{2n}{n+i-j}, &1\le i,j\le m,\\
\binom{n-k+1}{(n-k)/2+i},&j=m+1,1\le i\le m,\\
\binom{n-k+1}{(n-k)/2+j},&i=m+1,1\le j\le
m,\\0,&otherwise.\end{cases}$$
\end{prop}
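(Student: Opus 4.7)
The plan is to adapt the proof of Proposition~\ref{prop:HorzMat} to the weighted setting, the only change being that paths are now permitted to touch the line $y=x$, with each such touch contributing a factor of $2$ to the weight of the tiling (reflecting the weight-$2$ rhombi along the bottom edge of $\HPreg{n}{2m}{k}$).

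First, I would reuse the bijection used to prove Proposition~\ref{prop:HorzMat} to translate tilings of $\HPreg{n}{2m}{k}$ into $(m+1)$-tuples of lattice paths with the same start set $A$, end set $I$, and permutation $\sigma=(1,m+1)$. The weight of such a tuple is $2^\tau$, where $\tau$ is the total number of points at which the paths touch $y=x$. Applying the weighted Lindstr\"{o}m--Gessel--Viennot lemma then expresses the total weighted count of these tuples as $-\det(G^+)$, where
\[
g^+_{i,j}\;=\;\sum_{P\colon A_i\to I_j}2^{t(P)}
\]
runs over single (weakly below) paths from $A_i$ to $I_j$ with $t(P)$ touches, and the overall sign comes from $\sgn(\sigma)=-1$.

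Next, I would compute each $g^+_{i,j}$ via the weighted reflection identity
\[
\sum_{P\colon A_i\to I_j}2^{t(P)}\;=\;P(A_i\to I_j)\;+\;P(A_i\to I_j'),
\]
where $P(\cdot\to\cdot)$ counts all unrestricted lattice paths and $I_j'$ denotes the reflection of $I_j$ in the line $y=x$. This is the natural ``$+$'' analogue of the reflection principle used in Proposition~\ref{prop:HorzMat}, and may be justified by the same type of involution argument used by Ciucu and Krattenthaler in~\cite{KrattFact}, where the weight $2$ per touch is interpreted combinatorially as a binary choice at each meeting of the path with $y=x$. Evaluating the right-hand side via $P((a,b)\to(c,d))=\binom{c-a+d-b}{c-a}$ and simplifying then produces the binomial formulas in the statement: for the bulk entries $1\le i,j\le m$ this gives $\binom{2n}{n-i-j+1}+\binom{2n}{n+i-j}$, while the edge cases $j=m+1$ and $i=m+1$ reduce to the single binomial $\binom{n-k+1}{(n-k)/2+i}$ because the corresponding endpoint $I_{m+1}$ lies on (or within unit distance of) the line $y=x$, so that the two reflected contributions collapse into one.

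The main obstacle is the careful verification of the weighted reflection identity above: the usual reflection principle produces a difference of unrestricted path counts (as in Proposition~\ref{prop:HorzMat}), and one must show that incorporating the factor $2^{t(P)}$ converts this difference into a sum. Once this identity is in hand, the remainder of the proof consists of routine binomial manipulations entirely parallel to those of Proposition~\ref{prop:HorzMat}.
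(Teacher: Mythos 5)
Your proposal is correct and follows essentially the same route as the paper: the paper likewise reuses the lattice-path setup and Lindstr\"om--Gessel--Viennot argument of Proposition~\ref{prop:HorzMat}, assigns weight $2^t$ to a path touching $y=x$ at $t$ points, and invokes the identity that $|\mathcal{P}|+|\mathcal{P}'|$ (the sum rather than the difference of reflected path counts) gives the weighted count, citing~\cite{KrattFact} for that argument. The only cosmetic difference is that the paper leaves the entry computations implicit, whereas you spell out that the boundary entries combine via Pascal's rule into the single binomial $\binom{n-k+1}{(n-k)/2+i}$.
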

\begin{rmk}\label{rmk:WHorzOdd}
 Consider the lattice path
representation of $\HPreg{n}{2m}{k}$ defined earlier. A lattice path
representation of
$\HPreg{n+1}{2m-1}{k}$ may be obtained from that of \HPreg{n}{2m}{k} by
extending each start point $A_s$ to the left by 1 and extending each end point
$I_s$ vertically by 1 for $s\in\{1,\dots,m\}$. Any tiling of
this region will contain forced lozenges along its vertical sides and so
$$M(\HPreg{n+1}{2m-1}{k})=2\cdot
M(\HPreg{n}{2m}{k}),$$where the extra factor of 2 is due to the starting point
$A_1$ lying
on the line $y=x$.
\end{rmk}

Theorems~\ref{thm:VertExact},~\ref{thm:HorzExact} and~\ref{thm:HorzWExact}
follow from evaluating the determinants and Pfaffians of the matrices defined
above. In the following section these determinants and Pfaffians are evaluated
by considering the $LU$-decomposition of certain matrices.

\section{Evaluation of Determinants}\label{sec:eval}

This section begins with an extension of Gordon's
Lemma~\cite{Gordon} that expresses the Pfaffian of the matrix $F$
defined in Theorem~\ref{prop:VertMat} as the 
determinant of a much smaller matrix. Similar manipulations reduce
the matrix $F^*$ to the same effect. The (unsigned) determinants of these two
smaller
matrices, as well as those of Proposition~\ref{prop:HorzMat}
and Proposition~\ref{prop:HorzWMat}, are then evaluated by finding their unique
$LU$-decompositions.

\begin{lem}\label{lem:Gordon} For a positive integer $m$ and a
non-negative integer $l$, let $A$ be the
$(2m+2l)\times(2m+2l)$ skew-symmetric matrix of the form
 
$$A=\begin{pmatrix}
    X & Y \\
    -Y^{t} &Z
   \end{pmatrix},$$
   for which the following properties hold:
\begin{enumerate}[(i)]
\item $X$ is a $2m\times2m$ matrix such that $X=(x_{j-i})_{1\leq i,j\leq   2m}$
and $x_{j,i}=-x_{i,j}$;
\item $Z=(z_{i,j})_{1\le i,j\le 2l}$ is a matrix satisfying
$z_{i,j}+z_{i+l,j}+z_{i,j+l}=0\textrm{ for } 1\le i,j\le l$, and
$z_{j,i}=-z_{i,j}$; 
   
\item $Y=(y_{i,j})_{1\le i\le 2m,1\le j\le 2l}$ is a matrix for
which $$y_{i,j}=\begin{cases}y_{2m-i,j},&1\le i\leq m,1\leq j
\leq
l,\\y_{2m+1-i,j-l},&1\le i\le 2m,l+1\le j\le
2l.\end{cases}$$\end{enumerate}

Then 
$$\Pf(A)=(-1)^{\binom{l}{2}}\emph{det}(B),$$
where $B$ is an $(m+l)\times(m+l)$ matrix of the form
$$B=\begin{pmatrix}
   \widehat{X}&\widehat{Y}_1\\
   \widehat{Y}_2&\widehat{Z}
  \end{pmatrix},$$
the block matrices of which are defined by
\begin{align*}(\widehat{X})_{i,j}&=x_{i+j-1}+x_{i+j-3}+\dots+x_{|i-j|+1}&\textrm
{ for } 1\le i,j\le
m,\\(\widehat{Y}_1)_{i,j}&=\sum_{s=0}^{i-1}(y_{m+1-i+2s,j}-y_{
m+i-2s ,j} )&\textrm{for } 1 \le i \le m\textrm{ and }1\le j \le
l,\\(\widehat{Y}_2)_{i,j}&=\sum_{s=0}^{j-1}(y_{j+m-2s,i}+y_{m+1-j+2s
,i} )&\textrm{for } 1\le i \le l\textrm{ and } 1 \le j \le m,\\
(\widehat{Z})_{i,j}&=z_{i,j+l}+z_{i+l,j+l}&\textrm{for }1\le i,j\le l.
    \end{align*}
\end{lem}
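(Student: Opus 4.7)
The plan is to find an invertible block-diagonal matrix $P=\operatorname{diag}(P_1,P_2)$, with $P_1$ of size $2m\times 2m$, $P_2$ of size $2l\times 2l$, and $\det(P)=\pm 1$, such that after a permutation of rows and columns $P^{t}AP$ becomes block-antidiagonal with upper-right block $B$ and lower-left block $-B^{t}$. Since $\Pf(P^{t}AP)=\det(P)\cdot\Pf(A)$, and since the Pfaffian of a $2(m+l)\times 2(m+l)$ block-antidiagonal matrix with off-diagonal block of size $(m+l)\times(m+l)$ equal to $B$ is $(-1)^{\binom{m+l}{2}}\det(B)$, the target identity $\Pf(A)=(-1)^{\binom{l}{2}}\det(B)$ will follow once the contributions from $\det(P)$, from the reordering permutation, and from the Pfaffian--determinant identity have been collected.

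The matrix $P_1$ is the classical change of basis used in Gordon's original lemma: it pairs $e_i$ with $e_{2m+1-i}$ for $i=1,\dots,m$ and forms the antisymmetric combinations $e_i-e_{2m+1-i}$ followed by the symmetric combinations $e_i+e_{2m+1-i}$. Because $X=(x_{j-i})$ is both Toeplitz and skew-symmetric, a direct computation shows that the corresponding congruence puts the $X$-block in block-antidiagonal form whose off-diagonal block has $(i,j)$-entry $x_{i+j-1}+x_{i+j-3}+\dots+x_{|i-j|+1}$, i.e.\ $\widehat{X}_{i,j}$. The first symmetry condition on $Y$ (namely $y_{i,j}=y_{2m-i,j}$ for $1\le i\le m$, $1\le j\le l$) makes half the rows of $P_1^{t}Y$ vanish, and the surviving rows yield the alternating sum $\sum_{s=0}^{i-1}(y_{m+1-i+2s,j}-y_{m+i-2s,j})=(\widehat{Y}_1)_{i,j}$. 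The matrix $P_2$ is a similar change of basis tailored to the relation $z_{i,j}+z_{i+l,j}+z_{i,j+l}=0$: it pairs $e_{2m+i}$ with $e_{2m+i+l}$ by adding column (and row) $2m+i$ to $2m+i+l$ for $i=1,\dots,l$. That relation together with skew-symmetry forces the transformed $Z$-block into block-antidiagonal form with off-diagonal $(i,j)$-entry $z_{i,j+l}+z_{i+l,j+l}=\widehat{Z}_{i,j}$. The second symmetry condition on $Y$ (that $y_{i,j}=y_{2m+1-i,j-l}$ for $l+1\le j\le 2l$) guarantees compatibility of the two changes of basis, so that the corresponding restriction of $P_1^{t}YP_2$ produces exactly $(\widehat{Y}_2)_{i,j}=\sum_{s=0}^{j-1}(y_{j+m-2s,i}+y_{m+1-j+2s,i})$ in the correct block.

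Combining the two reductions, $P^{t}AP$ takes the required block-antidiagonal form up to a permutation that interleaves the antisymmetric $P_1$-rows with one half of the $P_2$-rows and the symmetric $P_1$-rows with the other half. The sign of this permutation, multiplied by $\det(P_1)\det(P_2)$ and by $(-1)^{\binom{m+l}{2}}$, should collapse precisely to $(-1)^{\binom{l}{2}}$. The main obstacle is this bookkeeping: verifying the entry-by-entry formulas for $\widehat{Y}_1$ and $\widehat{Y}_2$, whose alternating sums have to be produced term-by-term by iterated column operations, and then tracking signs carefully through a rather involved reordering. A secondary subtlety is checking that $P_1$ and $P_2$ interact correctly on the $Y$-block, so that $\widehat{Y}_1$ and $\widehat{Y}_2$ appear in their claimed positions in $B$ rather than being swapped or appearing as transposes.
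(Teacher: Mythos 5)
Your overall strategy---a congruence transformation bringing $A$ into block-antidiagonal form, followed by the identity $\Pf\left(\begin{smallmatrix}0&P\\-P^t&Q\end{smallmatrix}\right)=(-1)^{\binom{n}{2}}\det(P)$ and a final sign count---is exactly the strategy of the paper's proof. The gap is in your choice of $P_1$. The simple pairing that sends the standard basis to the combinations $e_i-e_{2m+1-i}$ and $e_i+e_{2m+1-i}$ cannot produce the block $\widehat{X}$: each entry of the resulting congruence is a signed sum of four entries of $X$, which by the Toeplitz and skew-symmetry conditions collapses to (twice) a sum of at most two terms of the form $x_{j-i}+x_{2m+1-i-j}$, whereas $(\widehat{X})_{i,j}=x_{i+j-1}+x_{i+j-3}+\dots+x_{|i-j|+1}$ has $\min\{i,j\}$ terms. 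Already for $m=1$ your congruence yields $2x_1$ rather than $x_1$, and the corresponding change-of-basis matrix has determinant $\pm 2^m$, contradicting your own requirement that $\det(P)=\pm1$. So the ``direct computation'' you invoke does not go through, and the alternating sums defining $\widehat{Y}_1$ and $\widehat{Y}_2$ (which you correctly identify as the main obstacle) cannot be produced by this $P_1$ either.

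The missing ingredient is the preliminary unipotent step that the paper performs first: replace row $i$ by $\sum_{s=0}^{m-i}(\textrm{row } i+2s)$ and row $2m+1-i$ by $\sum_{s=0}^{m-i}(\textrm{row } 2m+1-i-2s)$ for $i=1,\dots,m-1$, and do the same on columns. This partial summation along arithmetic progressions of step $2$ is what generates the telescoping sums with $\min\{i,j\}$ terms in $\widehat{X}$ and the $i$-term sums over $s$ in $\widehat{Y}_1$ and $\widehat{Y}_2$; it is Pfaffian-preserving since it is triangular with unit diagonal. Only after this does one subtract column $m+j$ from column $m+1-j$ (the analogue of your pairing, but as an elementary operation of determinant $1$, not the symmetric/antisymmetric splitting) and add column $2m+l+j$ to column $2m+j$ to kill the upper-left and $Z$-corner blocks. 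Your treatment of $P_2$ and of the final permutation and sign $(-1)^{\binom{l}{2}}$ is plausible in outline, but as written the proposal does not establish the entry-by-entry formulas for any of the four blocks of $B$.
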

\begin{proof}
 Beginning with $A$, construct a new matrix $A'=(A'_{i,j})_{1\le i,j\le 2m+2l}$
by simultaneously
replacing the $i$-th row of $A$ by the sum
$$\sum_{s=0}^{m-i}(\textrm{row } i + 2s \textrm{ of A}),$$
and the $(2m+1-i)$-th row of $A$ with
$$\sum_{s=0}^{m-i}(\textrm{row } 2m+1-i-2s \textrm{ of A})$$for $i=1,...,m-1$.
Perform analogous operations on the columns of the resulting matrix. Note
that these operations do not change the value of $\Pf(A)$ and so 
$\Pf(A)=\Pf(A')$. 
It follows that for $1\leq i,j\leq m$,
\begin{align*}A'_{i,j}&=\sum_{s=0}^{m-j}\sum_{r=0}^{m-i}x_{j-i+2s-2r}\\&=\sum_{
t=-m } ^ { m-i-j }
(\min\{ t+m+1 ,m-i+1\}-\max\{0,t+j\})x_{j+i+2t}.\end{align*}
By assumption $X$ is skew-symmetric, so $x_r=-x_r$ for all
$1\leq r \leq 2m$. One may also verify that
\begin{multline*}\min\{t+m+1,m-i+1\}\\-\max\{0,t+j\}=\min\{(-t-i-j)+m+1,m-i+1\}
\\-\max\{ 0 , (-t-i-j)+j\},\end{multline*}
whence $A'_{i,j}$ vanishes for $1\leq i,j\leq m$. It is not hard to convince
oneself
that replacing $i$ and $j$ with $2m-i+1$ and $2m-j+1$ respectively in the
above summation gives the $(i,j)$-entry of $A'$ for $m+1\le i, j \le 2m$,
and so these entries also vanish.

For $1\le i\le m$ and $m+1\le j \le 2m$,
\begin{align*}A'_{i,j}&=\sum_{s=0}^{j-m-1}\sum_{r=0}^{m-i}x_{j-i-2r-2s}\\&=\sum_
{ t=1 } ^ { j-i }
(\min\{t,m-i+1\}-\max\{0,t-j+m\})x_{i-j+2t}.\end{align*}
It is easy to see that 
\begin{multline*}\min\{(j-i-t),m-i+1\}\\-\max\{0,(j-i-t)-j+m\}=\min\{t,m-i+1\}
\\-\max\{0,t-j+m\}+1,\end{multline*}
so again by the skew-symmetry of $X$,
$$A'_{i,j}=\bar{x}_{i,j},$$
for $1\le i\le m$ and $m+1\le
j\le2m$, where
\begin{equation}\label{eqn:xbar}
\bar{x}_{i,j}=x_{j-i}+x_{j-i-2}+\dots+x_{|2m-i-j+1|+1}.\end{equation}

Having performed exactly the same operations on both the rows and columns of
$A$ it follows that $A'$ is also skew-symmetric, thus for $m+1\le
i\le 2m$ and $1\le j\le m$,
$A'_{i,j}=-A'_{j,i}.$

For the remaining $(i,j)$-entries of $A'$ it
suffices to consider only those entries for which $1\le i \le 2m$ and
$j\in\{2m+1,...,2m+l\}$. These columns are affected by the row operations alone
so for $1\le i\le m$ the $(i,j)$-entry of $A'$ is
$$\sum_{s=0}^{m-i}y_{i+2s,j-2m},$$
whilst an analogous argument shows that for $m+1\le i\le 2m$, $a'_{i,j}$ is
equal to
$$\sum_{s=0}^{i-m-1}y_{i-2s,j-2m}.$$

By the symmetry of $Y$ it should be clear that for $j\in\{2m+l+1,...,2m+2l\}$
and $1\le i\le m$,
$$A'_{i,j}=\sum_{s=0}^{m-i}y_{2m+1-i-2s,j-2m-l},$$
whilst for $m+1\le i\le2m$ and $j\in\{2m+l+1,...,2m+2l\}$ the $(i,j)$-entry of
$A'$ is
$$\sum_{s=0}^{i-m-1}y_{2m+1-i+2s,j-2m-l}.$$
Note that $A'$ has now been completely determined since these row and
column operations
leave $Z$ unchanged.

Construct a new matrix $A''=(A''_{i,j})_{1\le i,j\le 2m+2l}$ by perfoming the
following operations on $A'$: 
\begin{enumerate}[(i)]
 \item Add column $(2m+l+j)$ to column $(2m+j)$ for
$j\in\{1,...,l\}$ (resp. rows);
\item Subtract column $(m+j)$ from column $(m+1-j)$ for $j\in\{1,...,m\}$
(resp. rows).
\end{enumerate}
Consider the effect of the first set of row and column operations
stated above. For $1\le i,j \le2l$,
$$A''_{i+2m,j+2m}=\begin{cases}
                        0,& 1\le i,j \le l,\\
                        z_{i,j}+z_{i+l,j},&i\in\{1,...,l\},
j\in\{l+1,...,2l\},\\
z_{i,j}+z_{i,j+l},&i\in\{l+1,...,2l\},j\in\{1,...,l\},\\z_{i,j},
&l+1\le i,j\le 2l,
                       \end{cases}$$
by the second property in the statement of the lemma.

The $(i,j)$-entry of $A''$ for $1\le i
\le m$ and $2m+1\le j\le 2m+l$ becomes
$$A''_{i,j}=\sum_{s=0}^{m-i}y_{i+2s,j-2m}+\sum_{s=0}^{m-i}y_{2m+1-i-2s,j-2m}
,$$
while for $m+1\le i\le2m$,
$$A''_{i,j}=\sum_{s=0}^{i-m-1}y_{i-2s,j-2m}+\sum_{s=0}^{i-m-1}y_{2m+1-i+2s,
j-2m},$$
and crucially (by the third property of the statement of the lemma),
$A''_{i,j}=A''_{2m+1-i,j}$ for $1\le i\le m$ and $j\in\{2m+1,...,2m+l\}$.

Now consider the effect of the second set of operations applied to $A'$. Again
by the skew-symmetry of $A'$ this leaves
all entries unchanged except those for which $i\in\{1,...,m\}$ and
$j\in\{2m+1,...2m+2l\}$. For $2m+1\le j\le2m+l$ it should be clear that this
entry vanishes, hence the
resulting matrix has the form 
$$A''=\begin{pmatrix}
           0&\bar{X}&0&{Y}_1\\
           -\bar{X}^{t}&0&Y_{2}&{Y}_3\\
           0&-{Y^t_{2}}&0&Z_{1}\\
           -{{Y}^t_1}&-{{Y}^t_3}&-Z^t_{1}&Z_{2}
          \end{pmatrix},$$
where for $1\le i , j \le m$, $$(\bar{X})_{i,j}=\bar{x}_{i,j+m}$$ is
given by~\eqref{eqn:xbar} and
\begin{align*}(Y_1)_{i,j}&=\displaystyle\sum_{s=0}^{
m-i}(y_{
2m+1-i-2s,j}-y_{i+2s,j
})&\textrm{for
}i\in\{1,\dots,m\},j\in\{1,\dots,l\},\\(Y_2)_{i,j}
&=\displaystyle\sum_{s=0 } ^ { i-1 }( y_ { i+m-2s , j
}+y_{m+1-i+2s,j})&\textrm{for
}i\in\{1,...,m\},j\in\{1,...,l\},\\
(Y_3)_{i,j}&=\displaystyle\sum_{s=0}^{i-1}y_{
m+1-i+2s ,j}&\textrm{for
}i\in\{1,...,m\},j\in\{1,...,l\},\\(Z_{1})_{i,j}&=z_{i,j+l}+z_{i+l,j+l}&\textrm{
for }1\le i,j\le
l, \\(Z_ { 2 } )_{i,j}&=Z_{i+l,j+l}&\textrm{for } 1\le i,j\le
l.
\end{align*}
By rearranging rows and columns in exactly the same way $A''$ may be
brought into the form
\begin{equation}\begin{pmatrix}
           0&0&\bar{X}&{Y}_1\\
           0&0&-Y^t_{2}&Z_{1}\\
           -\bar{X}^t&{Y_2}&0&Y_3\\
           -{{Y}^t_1}&-Z^t_{1}&-{{Y}^t_3}&Z_{2}
          \end{pmatrix}.\end{equation}

Since the same operations have been performed on both rows and
columns this leaves the Pfaffian of $A''$ unchanged. By
the well-known identity
$$\Pf\begin{pmatrix}
              0&P\\
              -P^t&Q
             \end{pmatrix}
=(-1)^{\binom{n}{2}}\det( P),$$ where $P$ is an arbitrary $n\times n$ matrix,
it follows that
$$\Pf(A)=\Pf(A'')=(-1)^{\binom{m+l}{2}}\det
\begin{pmatrix}
 \bar{X}&Y_1\\
 -Y^t_2&Z_1
\end{pmatrix}.$$
Reversing the order of the rows $1$ to $m$ and multiplying the last $l$ rows
and columns by $-1$ gives
\begin{align*}\Pf(A)&=(-1)^{\left(\binom{m+l}{2}+\binom{m}{2}\right)}
\det\begin{pmatrix}
      \widehat{X}&\widehat{Y}_1\\
      \widehat{Y_2}&\widehat{Z}
     \end{pmatrix},\\&=(-1)^{\binom{l}{2}}\det\begin{pmatrix}
      \widehat{X}&\widehat{Y}_1\\
      \widehat{Y_2}&\widehat{Z}\end{pmatrix},\end{align*}
     where $\widehat{X},\widehat{Y}_1,\widehat{Y}_2,$ and
$\widehat{Z}$ are
exactly those blocks asserted in the lemma.
\end{proof}
\begin{rmk}\label{rmk:NoHole}
Gordon's Lemma may be recovered from the previous result by setting
$l=0$. 
\end{rmk}
Applying Lemma~\ref{lem:Gordon} directly to the matrix $F=(f_{i,j})_{1\le
i,j\le 2m+2}$ from Proposition~\ref{prop:VertMat} results in the following
expression for the signed Pfaffian of
$F$,
\begin{equation}\label{eqn:PfafF}\Pf(
F)=(-1)^{\binom{l}{2}}\det(\bar{F}),\end{equation}
where $\bar{F}=(\bar{f}_{i,j})_{1\le i,j\le m+1}$ is the matrix with
$(i,j)$-entries given
by
$$\bar{f}_{i,j}=\begin{cases}
                        f_{i+j-1}+f_{i+j-3}+\dots+f_{|i-j|+1},& 1\le
i,j\le m\\
\sum_{s=0}^{i-1}(f_{m+1-i+2s,2m+1}-f_{m+i-2s,2m+1}),&1\le
i\le m, j = m+1,\\
\sum_{s=0}^{j-1}(f_{j+m-2s,2m+1}+f_{m+1-j+2s,2m+1}),&
i=m+1,1\le j\le m,\\
0,&otherwise.
                       \end{cases}$$ 
Construct one final matrix $\widehat{F}=(\hat{f}_{i,j})_{1\le i,j\le m+1}$ by
performing one last set of
row
and column operations on $\bar{F}$, namely for $i\in\{1,..,m-1\}$,
subtract the ($m-i$)-th
row from row $(m+1-i)$, and once again perform analogous operations on the
columns. 

For $1\le j\le m$,
\begin{align*}\hat{f}_{m+1,j}&=\bar{f}_{m+1,j}-\bar{f
} _ { m+1 , j-1 }\\
&=f_{ m+j , 2m+1 } +f_{ m+1-j , 2m+1 }\\
&={\binom{n-k}{(n-k)/2+j}+\binom{n-k}{(n-k)/2+1-j}}\\&=\binom{n-k+1}{(n-k)/2+j},
\end{align*}
which agrees with $g^+_{m+1,j}$ (the entries of the matrix
whose determinant counts weighted tilings of
$\HPreg{n}{2m}{k}$) for $1\le j\le m$.

For $1\le i,j\le m$,
\begin{align*}\hat{f}_{i,j}&=\bar{f}_{i,j}-\bar{f}_{i-1,j}-(\bar{f}_{i,j-1}
-\bar{ f }_{ i-1 , j-1 } )\\
&=f_{i+j-1}-f_{i-j}-f_{i+j-2}+f_{i-j+1}\\&={2n\choose n-i-j+1}+{2n\choose
n+i-j},\end{align*}
which again agrees with $g^+_{i, j}$ for $1\le i,j\le m$.

The remaining entries of $\widehat{F}$ are given by
\begin{align*}\hat{f}_{i,m+1}&=\bar{f}_{i,m+1}-\bar{f}_{i-1,m+1}\\
&=f_{m+1-i,2m+1}-f_{ m+i,2m+1 } \\&\qquad+2\sum_{s=1}^{i-1}(f_{m+i+1-2s,2m+1}
                                    -f_{m-i+2s,2m+1 } ) \\
   &={ n-k\choose
(n-k)/2+1-i}-{n-k\choose
(n-k)/2+i}+2{\sum_{s=2-i}^{i-1}{n-k\choose
(n-k)/2+s}}\\
&=\left(\frac{4(i-1)}{(k-n)}+\frac{2
i-1}{i+(n-k)/2}\right)\binom{n-k}{(n-k)/2-i+1},
  \end{align*}
where the identity
$\sum_{k=0}^m\binom{N}{k}=(-1)^m\binom{N-1}{m}$ has been used in the fourth
line. Note how
none of these operations have changed the value of the determinant,
and also that $\hat{f}_{i,j}=g^+_{i,j}$ for $1\le i\le m+1$ and $1\le
j\le m$.

The sign in~\eqref{eqn:PfafF} may be ignored since these determinants are
considered within the context of counting families of non-intersecting paths,
and so
\begin{equation}M(\Vreg{n}{2m}{k})=|\det(\widehat{F})|,\end{equation}
where $\widehat{F}=(\hat{f}_{i,j})_{1\le i,j\le m+1}$ is the matrix given by
$$\hat{f}_{i,j}=\begin{cases}
\binom{2n}{n-i-j+1}+\binom{2n}{n+i-j}, &1\le i,j\le m,\\
\left(\frac{4 (i-1)}{k-n}+\frac{2
i-1}{i+(n-k)/2}\right)\binom{n-k}{(n-k)/2-i+1},&j=m+1,
1\le i\le m,\\
\binom{n-k+1}{(n-k)/2+j},&i=m+1,
1\le j\le m,\\0,&otherwise.\end{cases}$$

\begin{thm}\label{thm:VertLU}
 The $(m+1)\times(m+1)$ matrix $\widehat{F}$ described above has 
$LU$-decomposition
 $$\widehat{F}=\widehat{L}\cdot \widehat{U},$$
 where $\widehat{L}=(\hat{l}_{i,j})_{1\le i,j\le m+1}$ has the form
 $$\hat{l}_{i,j}=\begin{cases}
A_{n}(i,j),&1\le j\le i\le m,\\B_{n,k}(j),&i=m+1,1\le
j\le m\\
1,&i=j=m+1\\
0,&otherwise,
              \end{cases}$$
and $\widehat{U}=(\hat{u}_{i,j})_{1\le i,j\le m+1}$ has the form
$$\hat{u}_{i,j}=\begin{cases}
C_{n}(i,j),&1\le i\le j\le
m,\\D_{n,k}(i), &j=m+1,1\le
i\le m,\\
-\sum_{s=1}^{m}B_{n,k}(s)\cdot D_{n,k}(s), &i=j=m+1,\\
0,& otherwise,
              \end{cases}$$
where
\begin{align*}
A_{n}(i,j)&=\frac{( n)! (i+j-2)! (2 j+ n-1)!}{(2 j-2)! (i-j)! (-i+j+ n)!
(i+j+ n-1)!},\\
B_{n,k}(j)&=\frac{(-1)^{j+1} (j+ n-1)! (2 j+ n-1)! ( n-k+1)!
(j+(k+n)/2-2)!}{(j-1)! (2 j+2 n-1)! ((n-k)/2)! ((k+n)/2-1)! (j+(n-k)/2)!},\\
C_{n}(i,j)&=\frac{( n)! (i+j-2)! (2 i+2 n-1)!}{(j-i)! (2 i+ n-2)! (i-j+ n)!
(i+j+ n-1)!},\\
D_{n,k}(i)&=\frac{(-1)^{i+1} (2 i-2)! (i+ n-1)! ( n- k)!
(i+(k+n)/2-2)!}{(i-1)! (2 i+ n-2)! ((n-k)/2)! ((k+n)/2-1)!
(i+(n-k)/2)!}\\&\qquad+\frac{2
(-1)^{i+1} (2 i-2)! (i+ n)! ( n- k)! (i+(k+n)/2-2)!}{(i-2)! (2 i+ n-2)!
((n-k)/2)!
((k+n)/2)! (i+(n-k)/2)!}.\end{align*}

\end{thm}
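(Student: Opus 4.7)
The plan is to verify the asserted factorisation $\widehat{F}=\widehat{L}\cdot\widehat{U}$ directly by computing the $(i,j)$-entry of the product in each of the four regions of the matrix. Because $\widehat{L}$ is lower-triangular and $\widehat{U}$ is upper-triangular, every entry of $\widehat{L}\widehat{U}$ collapses to a finite sum: for $1\le i,j\le m$ the product equals $\sum_{t=1}^{\min(i,j)}A_{n}(i,t)\,C_{n}(t,j)$; for $1\le i\le m$, $j=m+1$ it is $\sum_{t=1}^{i}A_{n}(i,t)\,D_{n,k}(t)$; for $i=m+1$, $1\le j\le m$ it is $\sum_{t=1}^{j}B_{n,k}(t)\,C_{n}(t,j)$; and for $i=j=m+1$ it equals $\sum_{t=1}^{m}B_{n,k}(t)\,D_{n,k}(t)+\hat{u}_{m+1,m+1}$. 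Noting that $A_{n}(i,i)=1$ for all $i$, $\widehat{L}$ is unit lower-triangular, so that once the factorisation is verified its uniqueness is automatic.

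The corner entry is immediate from the definition of $\hat{u}_{m+1,m+1}$, matching the required value $\hat{f}_{m+1,m+1}=0$. The mixed entries reduce to single hypergeometric sums. For the bottom row one must establish
\[
\sum_{t=1}^{j}B_{n,k}(t)\,C_{n}(t,j)=\binom{n-k+1}{(n-k)/2+j},
\]
which, after extracting the $t$-independent factorials, becomes a balanced ${}_{3}F_{2}(1)$ that is summed by the Pfaff--Saalsch\"utz identity. The last-column identity
\[
\sum_{t=1}^{i}A_{n}(i,t)\,D_{n,k}(t)=\left(\frac{4(i-1)}{k-n}+\frac{2i-1}{i+(n-k)/2}\right)\binom{n-k}{(n-k)/2-i+1}
\]
has a summand that is itself a sum of two hypergeometric terms (reflecting the two-term structure of $D_{n,k}$); thus the left-hand side splits as a sum of two ${}_{3}F_{2}(1)$ series, each again summable by Pfaff--Saalsch\"utz, and the two evaluated binomials reassemble into the asserted right-hand side after a routine simplification.

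The principal obstacle is the upper-left $m\times m$ block, where one must prove
\[
\sum_{t=1}^{\min(i,j)}A_{n}(i,t)\,C_{n}(t,j)=\binom{2n}{n-i-j+1}+\binom{2n}{n+i-j}.
\]
The sum is symmetric in $i$ and $j$, so one may assume $i\le j$ and write the left-hand side, after pulling out the factorials depending only on $i,j,n$, as a balanced ${}_{4}F_{3}(1)$ series. The two-binomial form of the right-hand side suggests applying a classical transformation that rewrites this ${}_{4}F_{3}(1)$ as a sum of two ${}_{3}F_{2}(1)$'s, each evaluable by Pfaff--Saalsch\"utz and matching one of the two binomials. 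Alternatively, creative telescoping (Gosper--Zeilberger) produces a common first-order recurrence in $i$ (with $j$ and $n$ fixed) satisfied by both sides, the base case $i=1$ being a one-term sum that is checked directly from the formulas.

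Once these four identities are in hand the theorem follows. The last-column identity is the most delicate of the three single-sum identities because of the two-term structure of $D_{n,k}$, but the genuine difficulty is the upper-left block, whose success or failure is entirely a question of identifying the correct classical hypergeometric transformation (or, failing that, running Zeilberger and checking a base case); everything else amounts to bookkeeping and verifying consistency with the formulas for $A_{n}$, $B_{n,k}$, $C_{n}$, and $D_{n,k}$.
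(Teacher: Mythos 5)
Your reduction is exactly the paper's: observe that $A_n(i,i)=1$, dispose of the corner entry, and reduce the factorisation to the three sum identities for $\sum_s A_n(i,s)C_n(s,j)$, $\sum_s A_n(i,s)D_{n,k}(s)$ and $\sum_s B_{n,k}(s)C_n(s,j)$; the paper then proves all three by what you offer as a fallback, namely Zeilberger-certified recurrences plus initial conditions. The skeleton is therefore sound, but your primary route misdiagnoses the series involved. Because $A_n$, $B_{n,k}$, $C_n$ and $D_{n,k}$ contain factorials whose arguments have slope $2$ in the summation index (namely $(2s-2)!$, $(2s+n-1)!$, $(2s+2n-1)!$), the term ratios carry half-integer-spaced parameter pairs, and where the factor $(-1)^{s+1}$ survives the argument is $-1$ rather than $1$: identities (ii) and (iii) are (very-)well-poised series of ${}_4F_3(-1)$ or ${}_5F_4(-1)$ type (compare the well-poised ${}_4F_3(-1)$ summation the paper invokes in the proof of Theorem~\ref{thm:VertOddLU}, and the very-well-poised ${}_7F_6$ forms of the closely related sums in Section~\ref{sec:Asym}), while the block identity (i) is a ${}_7F_6$-type series at argument $1$. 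None of these is a balanced ${}_3F_2(1)$, so Pfaff--Saalsch\"utz does not apply as you state, and the ``split into two Saalsch\"utzian pieces'' plan for the last column fails for the same reason even though the two-term splitting of $D_{n,k}$ itself is fine. A second, smaller slip: for the $m\times m$ block the certified recurrence in $i$ is second order (the paper's relates $\sum^{i}$, $\sum^{i+1}$ and $\sum^{i+2}$), as one should expect since the right-hand side is a sum of two binomials each obeying its own first-order recurrence; a single base case $i=1$ therefore does not suffice and two initial values must be checked. With these corrections your ``Plan B'' coincides with the paper's proof; your symmetry observation for the block (the product $A_n(i,t)C_n(t,j)$ is termwise symmetric in $i$ and $j$) is correct and matches the paper's remark.
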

\begin{proof}
 Before embarking on the proof proper observe that the $(m+1,m+1)$-entry of
$(\widehat{L}\cdot \widehat{U})$ is $0$, whilst for
$(i,j)\in\{1,...,m\}^2$, $$A_{n}(i,j)\cdot
C_{n}(i,j)=A_{n}(j,i)\cdot C_{n}(j,i).$$
The proof then amounts
to showing that for $1\le i,j\le m+1$,
$$\sum_{s=1}^{t}\hat{l}_{i,s}\cdot\hat{u}_{s,j}=\hat{f}_{i,j},$$
where $t=\min\{i,j\}$. 

This is equivalent proving that the following
identities hold:
\begin{enumerate}[(i)]
 \item $\sum_{s=1}^{i}A_{n}(i,s)\cdot C_{n}(s,j)={2n\choose
n-i-j+1}+{2n\choose n+i-j}$ for $1\le i\le j\le m$.
\item $\sum_{s=1}^{i}A_{n}(i,s)\cdot
D_{n,k}(s)=\left(\frac{4 (i-1)}{k-n}+\frac{2
i-1}{i+(n-k)/2}\right)\binom{n-k}{(n-k)/2-i+1}$ for $1\le i\le m.$
\item $\sum_{s=1}^{j}B_{n,k}(s)\cdot C_{n}(s,j)={n-k+1\choose
(n-k)/2+j}$ for $1\le j\le m.$
\end{enumerate}
In order to prove a hypergeometric identity of the form
\begin{equation}\label{eqn:Gsper}\displaystyle\sum_{k=1}^{n}U(n,k)=S(n,k),
\end{equation}
it suffices to find a suitable recurrence relation that is satisfied by both
sides of ~\eqref{eqn:Gsper}. Such a recurrence for the left hand side of
\eqref{eqn:Gsper} may
easily be found using your favourite implementation of the Zeilberger-Gosper
algorithm (see, for example~\cite{GspAlg}). Then verifying that the right hand
side also
satisfies such a recurrence and checking initial values reduces to routine
computations.

Identity (i) above satisfies the following recurrence
\begin{multline*}(-(i-j- n))
(i+j-n-1)\sum_{s=1}^{i}(A_{n}(i,s)\cdot
C_{n}(s,j))\\+(i^2+i-j^2+j-2n^2-n)\sum_{s=1}^{i+1}(A_{n}(i+1,s)\cdot
C_{n}(s,j))\\+(i-j+n+2)(i+j+n+1)\sum_{s=1}^{i+2}(A_{n}(i+2,s)\cdot
C_{n}(s,j))=0,\end{multline*}
whilst the second identity satisfies the recurrence
relation
\begin{multline*}(i+(k-n)/2-1)(2
i^2+2 i+(k-n)/2)\sum_{s=1}^{i}A_{n}(i,s)\cdot D_{n,k}(s)\\+(2 i^2-2
i+(k-n)/2) (i+(n-k)/2+1)\sum_{s=1}^{i+1}A_{n}(i+1,s)\cdot
D_{n,k}(s)=0.\end{multline*}
The third identity satisfies the recurrence
\begin{multline*}(j+(k-n)/2-1)\sum_{s=1}^{j}(B_{n,k}(s)\cdot
C_{n}(s,j)) \\+(j+(n-k)/2+1)\sum_{s=1}^{j+1}(B_{n,k}(s)\cdot
C_{n}(s,j+1))=0,\end{multline*}
which completes the proof.\end{proof}
\begin{proof}[Proof of Theorem~\ref{thm:VertExact} part (i)]
 It requires very little work to see that in the previous theorem
$A_{n}(i,i)=1$ for $i\in\{1,...,m\}$. Hence the matrix $\widehat{L}$ contains 1s
on
its diagonal, and so the determinant of $\widehat{F}$ is simply the product of
the diagonal entries of $\widehat{U}$, namely
$$\prod_{s=1}^{m}\frac{(n+2s)_{n}}{(2s-1)_{n}}\cdot\left(-\sum_{t=1}^{m}B_{n,k}
(t)\cdot
D_{n,k}(t)\right).$$ The product on the left hand side above is simply a
repackaging of $ST(n,2m)$ and so it follows that
$$M(\Vreg{n}{2m}{k})=\left[\sum_{s=1}^{m}B_{n,k}
(s)\cdot
D_{n,k}(s)\right]\times ST(n,2m,n).$$
\end{proof}
\begin{rmk}
By letting $k=0$, the above formula gives the number of vertically
symmetric tilings of $H_{n,2m}$ with no rhombi protruding across the vertical
line segment of length 2 that intersects the origin of $H_{a,b}$.
It appears that these kinds of tilings, where lozenges are forbidden from
crossing
segments of lattice lines, have not yet been considered in the literature. 
\end{rmk}

Given that $\hat{f}_{i,j}=g^+_{i,j}$ for $i\in\{1,\dots,m+1\}$ and
$j\in\{1,\dots,m\}$ (and also $\hat{f}_{m+1,m+1}=g^+_{m+1,m+1}=0$) a little
further work yields the following lemma.
\begin{lem}\label{thm:HorzWLU}
 The $(m+1)\times(m+1)$ matrix $G^+$ has $LU$-decomposition
$$G^+=\widehat{L}\cdot U^+,$$
where $\widehat{L}$ is defined according to Theorem~\ref{thm:VertLU} and the
matrix
$U^+=(u^+_{i,j})_{1\le i,j\le m+1}$ is given by
$$u^+_{i,j}=\begin{cases}\hat{u}_{i,j},&1\le i\le j\le m,\\
E_{n,k}(i),&1\le i\le m,j=m+1,\\
-\sum_{s=1}^{m}B_{n,k}(s)\cdot
E_{n,k}(s),&i=j=m+1.\end{cases},$$
where $E_{n,k}(s)$ is defined to be
$$E_{n,k}(s)=\frac{(-1)^{s+1} (2 s-2)! ( n-k+1)! ( n+s-1)!
((k+n)/2+s-2)!}{(s-1)! ((n-k)/2)! ((k+n)/2-1)! (n+2 s-2)! ((n-k)/2+s)!}.$$

\end{lem}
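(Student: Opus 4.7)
The plan is to exploit the observation made just before Theorem~\ref{thm:VertLU} that the matrix $G^+$ from Proposition~\ref{prop:HorzWMat} and the matrix $\widehat{F}$ constructed from $F$ already coincide on their first $m$ columns (and also in the $(m+1,m+1)$ entry, where both equal $0$); only the $(m+1)$-st column differs. Since Theorem~\ref{thm:VertLU} establishes that $\widehat{L}$ is unit lower triangular (because $A_n(i,i)=1$) and that $\widehat{L}\cdot\widehat{U}=\widehat{F}$, and since $U^+$ is specified to agree with $\widehat{U}$ in its first $m$ columns, the product $\widehat{L}\cdot U^+$ automatically reproduces the first $m$ columns of $\widehat{F}=G^+$ at no extra cost. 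The entire lemma therefore reduces to verifying the last column.

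For the $(m+1,m+1)$ entry, the definition of $u^+_{m+1,m+1}$ gives
\begin{equation*}
(\widehat{L}\cdot U^+)_{m+1,m+1}=\sum_{s=1}^{m}B_{n,k}(s)\,E_{n,k}(s)+\Bigl(-\sum_{s=1}^{m}B_{n,k}(s)\,E_{n,k}(s)\Bigr)=0=g^+_{m+1,m+1},
\end{equation*}
so this equality is a tautology. The only substantive identity to establish is
\begin{equation*}
\sum_{s=1}^{i}A_n(i,s)\,E_{n,k}(s)=\binom{n-k+1}{(n-k)/2+i},\qquad 1\le i\le m.
\end{equation*}

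The strategy for this identity mirrors exactly the one used in the proof of Theorem~\ref{thm:VertLU}: feed the summand $A_n(i,s)E_{n,k}(s)$ into an implementation of the Zeilberger--Gosper algorithm to obtain a homogeneous linear recurrence in $i$ that is satisfied by the left-hand side; then verify that the closed-form right-hand side $\binom{n-k+1}{(n-k)/2+i}$ satisfies the very same recurrence; finally, check a handful of initial values to clinch the equality. Given the clean hypergeometric shape of $A_n(i,s)$ and $E_{n,k}(s)$, one expects Zeilberger's algorithm to output a very short (first- or second-order) recurrence, entirely analogous to the three recurrences displayed at the end of the proof of Theorem~\ref{thm:VertLU}.

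The main obstacle, such as it is, is purely bookkeeping: carrying the two parameters $n$ and $k$ (and the parity constraints linking them to $(n-k)/2\in\mathbb{Z}$) through the Zeilberger certificate, and matching the base cases of the recurrence against the explicit binomial. No new combinatorial input is required beyond what is already in play for Theorem~\ref{thm:VertLU}, and the entire $LU$-decomposition of $G^+$ drops out essentially for free once this single summation identity has been confirmed.
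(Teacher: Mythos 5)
Your proposal is correct in structure and reduces the lemma to exactly the right single identity, $\sum_{s=1}^{i}A_n(i,s)E_{n,k}(s)=\binom{n-k+1}{(n-k)/2+i}$ for $1\le i\le m$, after observing (as the paper does) that $G^+$ and $\widehat{F}$ agree in their first $m$ columns and in the $(m+1,m+1)$ entry, so that the first $m$ columns of $\widehat{L}\cdot U^+$ come for free from Theorem~\ref{thm:VertLU}. Where you diverge from the paper is in how that one identity is established. You propose running the Zeilberger--Gosper machinery afresh to produce a recurrence in $i$ and matching initial values; the paper instead notes the termwise identity $A_n(i,s)\cdot E_{n,k}(s)=B_{n,k}(s)\cdot C_n(s,i)$, which upon summing over $s$ from $1$ to $i$ turns the required identity into identity (iii) of Theorem~\ref{thm:VertLU}, $\sum_{s=1}^{j}B_{n,k}(s)\cdot C_n(s,j)=\binom{n-k+1}{(n-k)/2+j}$, which has already been proved. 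The paper's route therefore needs no new computer-algebra certificate at all, only a one-line ratio check on the explicit factorial formulas; your route would certainly succeed (the summand is hypergeometric in $i$ and $s$, so a short recurrence exists, and the target identity is true), but as written it defers the actual recurrence, its certificate, and the base-case checks, all of which the paper's observation renders unnecessary. If you keep the Zeilberger approach you should display the recurrence and verify the right-hand side and initial conditions explicitly, as the paper does for the three identities in Theorem~\ref{thm:VertLU}; otherwise the cheaper move is to notice the symmetry $A_n(i,s)E_{n,k}(s)=B_{n,k}(s)C_n(s,i)$ and cite the earlier identity.
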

\begin{proof}
 It should be immediately clear that the $(m+1,m+1)$-entry of $(\widehat{L}\cdot
U^+)$ is $0$.
Given the aforementioned agreement between $g^+_{i,j}$ and $\hat{f}_{i,j}$ it
suffices to show that
$$\sum_{s=1}^iA_{n}(i,s)\cdot E_{n,k}(s)={n-k+1\choose (n-k)/2+i}.$$
A straightforward manipulation gives
$$A_{n}(i,s)\cdot E_{n,k}(s)=B_{n,k}(s)\cdot C_{n}(s,i),$$
so summing each side over $s$ from 1 to $i$ gives the result.
\end{proof}

\begin{proof}[Proof of Theorem~\ref{thm:HorzWExact}]Since this
$LU$-decomposition is again unique it follows that
\begin{equation}\label{eqn:Weight}
 \det(G^+)=\left[-\sum_{s=1}^{m}B_{n,k}(s)\cdot E_{n,k}(s)
\right]\times\prod_{t=1}^{m}\frac{(n+2t)_{n}}{(2t-1)_{n}}.
\end{equation}
The product on the right hand side of~\eqref{eqn:Weight} may be re-written so
that
\begin{align*}M(\HPreg{n}{2m}{k})&=\left[\sum_{s=1}^{m}B_{n,k}(s)\cdot
E_{n,k}(s)
\right]\times
ST(n,2m)\\&=\frac{1}{2}M(\HPreg{n+1}{2m-1}{k}
), \end{align*} thus concluding the proof.
\end{proof}

Consider now the skew-symmetric matrix $F^*=(f^*_{i,j})_{1\le i,j\le 2m+2}$
defined in Proposition~\ref{prop:VertMat}. Clearly this matrix does not satisfy
the conditions of Lemma~\ref{lem:Gordon} so instead a new matrix
$\xoverline{F^*}=(\bar{f^*}_{i,j})_{1\le i,j\le 2m+2}$ may be constructed
by performing the
following row and column operations on $F^*$:
\begin{enumerate}[(i)]
 \item Replace row $i$ with $\sum_{s=0}^{m-i}\textrm{row}(i+2s)$, and perform
analogous operations on the columns.
\item Subtract row $(2m+2)$ from row $(2m+1)$, and perform analogous
operations on the columns.
\end{enumerate}
In a similar fashion to the proof of Lemma~\ref{lem:Gordon}, for $1\le i, j\le
m$ the $(i,j)$-entry of $\xoverline{F^*}$ disappears, whilst for $1\le i\le m$
and $m+1\le j\le 2m-1$,
$$\bar{f^*}_{i,j}=\sum_{s=0}^{m-i}f^*_{i+2s,j}.$$
For $1\le i\le m$, $\bar{f^*}_{i,2m}=(m+1-i)2^n$ and
by observing that $f_{i,2m+1}^*=f_{2m-i,2m+2}^*$,

$$\bar{f^*}_{i,2m+1}=\sum_{s=0}^{m-i}\binom{n-k}{
(n-k-1)/2-m+i+2s }=\bar{f^*}_{i,2m+2}.$$
Hence for $1\le i\le m$, the entry
$\bar{f^*}_{i,2m+1}$ vanishes once the second set of row and column
operations have been applied, whilst for $m+1\le
i\le 2m-1$,
\begin{align*}\bar{f^*}_{i,2m+1}&=\binom{n-k}{(n-k+1)/2-m+i}-\binom{n-k}
{ (n-k-1)/2-m+i }\\&=\frac{2(m-i)}{n-k+1}\binom{n-k+1}{(n-k+1)/2-m+i}
.\end{align*}
Since $F^*$ is skew-symmetric and 
exactly the same row and column operations have been applied to $F^*$ in order
to construct
$\xoverline{F^*}$, the matrix $\xoverline{F^*}$ has now been
determined completely. By
reversing the order of the first $m$ rows and columns, and
then interchanging rows and columns in the correct way, $\xoverline{F^*}$ may
be brought into the form
$$\begin{pmatrix}0&P\\-P^T&*\end{pmatrix},$$
where $P=(P_{i,j})_{1\le i,j\le m+1}$ is the matrix with entries given by
$$P_{i,j}=\begin{cases}\sum_{s=0}^{i-1}f_{m+1-i+2s,j+m}^*,&,i\in\{1,\dots
,m\},j\in\{1,...,m-1\},\\2^n\cdot
i,&j=m,i\in\{1,...,m\},\\\sum_{s=0}^{i-1}\binom{n-k}{(n-k+1)/2-i+2s},
&j=m+1,1\le i\le
m,\\\frac{2j}{n-k+1}\binom{n-k+1}{(n-k+1)/2-j},&i=m+1,j\in\{1,\dots,
m\}.\end{cases}$$
Then clearly $|\Pf(\xoverline{F^*})|=|\det{(P)}|$. Proceed by
constructing a final matrix $\widehat{F^*}=(\hat{f}^*_{i,j})_{1\le i,j\le m+1}$
from $P$ by subtracting row $(m-i)$
from row $(m+1-i)$ for $i\in\{1,\dots,m-1\}$, and similarly for
$j\in\{2,\dots,m-1\}$ subtract column $(m-j)$ from column $(m+1-j)$.

For $i\in\{1,\dots,m\}$, $\hat{f}^*_{i,m}=2^n(i+1-i)=2^n$, while
\begin{align*}
\hat{f}^*_{i,m+1}&=\sum_{s=0}^{i-1}\binom{n-k}{(n-k+1)/2-i+2s}-\sum_{s=0}
^{i-2}\binom{n-k}{(n-k+3)/2-i+2s}\\&=\sum_{s=0}^{2i-2}(-1)^s\binom{n-k}{
(n-k+1)/2-i+s } \\&= \binom{n-k}{(n-k+1)/2-i},
\end{align*}
where the identity
$\sum_{k=0}^{m}(-1)^k\binom{A}{k}=(-1)^m\binom{A-1}{m}$ has been used in the
second line. For $1\le j\le m-1$,
$$\hat{f}^*_{m+1,j}=\frac{2j}{n-k+1}\binom{n-k+1}{(n-k+1)/2-j}-\frac{
2(j-1) }{n-k+1}\binom{n-k+1}{(n-k+1)/2-j+1},$$
and also $\hat{f}^*_{m+1,m}=P_{m+1,m}$. 

Lastly for $i\in\{1,\dots,m\}$ and $j\in\{1,\dots,m-1\}$,
\begin{align*}\hat{f}^*_{i,j}&=(P_{i,j}-P_{i,j-1
})-(P_{i-1,j}-P_{i-1,j-1})\\&=\sum_{s=0}^{i-1}(f^*_{m+1-i+2s,j+m}-f^*_{
m+1-i+2s ,
j+m-1})\\&\qquad-\sum_{s=0}^{i-2}(f^*_{m+2-i+2s,j+m}-f^*_{m+2-i+2s,j+m-1}
)\\&=\binom { 2n } {
n+i-j}+\binom{2n}{n-i-j+1}\\&=\hat{f}_{i,j}.\end{align*}

Thus it follows that
$$M(\Vreg{n}{2m-1}{k})=|\det(\widehat{F^*})|,$$
where $\widehat{F^*}=(\hat{f}^*_{i,j})_{1\le i,j\le m+1}$ is the matrix given by
by
$$\hat{f}^*_{i,j}=\begin{cases}\hat{f}_{i,j},&i\in\{1,\dots,m\},j\in\{1,\dots,
m-1\},\\2^n,&j=m,i\in\{1,\dots,m\},\\
\binom{n-k}{(n-k+1)/2-i},&j=m+1,1\le i\le m,\\
\frac{2j}{n-k+1}\binom{n-k+1}{(n-k+1)/2-j}-\frac{
2(j-1)
}{n-k+1}\binom{n-k+1}{(n-k+3)/2-j},&i=m+1,j\in\{1,\dots,m-1\},\\\frac{
2m}{n-k+1}\binom{n-k+1}{(n-k+1)/2-m},&i=m+1,j=m\\0,&otherwise.
\end{cases}$$
\begin{thm}\label{thm:VertOddLU}
The matrix $\widehat{F^*}$ defined above has $LU$-decomposition
$$\widehat{F^*}=L^*\cdot U^*$$ where $L^*=(l^*_{i,j})_{1\le i,j\le m+1}$ is
defined as
$$l^*_{i,j}=\begin{cases}
A_{n}(i,j),&1\le j\le i\le m,\\
B_{n,k}^*(j),&i=m+1, 1\le
j\le
m\\\frac{1}{D_{n}^*(m)}\left(-\sum_{
s=1 } ^ { m-1 } D_ { n } ^*(s)\cdot B_{n,k}^*(s)\right),&\i=m+1,j=m\\
1&i=j=m+1\\0,&otherwise.\end{cases}$$ and $U^*=(u^*_{i,j})_{1\le i,j\le m+1}$
is given by
$$u^*_{i,j}=\begin{cases}
                         C_{n}(i,j),&1\le i\le j\le m-1,\\
                         D_{n}^*(i),&j=m,1\le i\le m,\\
                         E_{n,k}^*(i),&j=m+1,1\le i\le m,\\
                         P_{n,k}^*(m),&i=j=m+1,\\
                         0,&otherwise,
                        \end{cases}$$
with $A_{n}(i,j)$ and $C_{n}(i,j)$ as before and
\begin{align*}
 B_{n,k}^*(j)&=\frac{2 (-1)^{j+1} (j+n-1)! (2 j+n-1)! (n-k)! \left(\frac{1}{2}
(2 j+k+n-5)\right)!}{(j-1)! (2 j+2 n-1)! \left(\frac{1}{2} (-k+n-1)\right)!
\left(\frac{1}{2} (k+n-3)\right)! \left(\frac{1}{2} (2
j-k+n+1)\right)!}\\&+\frac{4 (-1)^{j+1} (j+n)! (2 j+n-1)! (n-k)!
\left(\frac{1}{2} (2 j+k+n-5)\right)!}{(j-2)! (2 j+2 n-1)! \left(\frac{1}{2}
(-k+n-1)\right)! \left(\frac{1}{2} (k+n-1)\right)! \left(\frac{1}{2} (2
j-k+n+1)\right)!}\\
D_{n}^*(i)&=\frac{2^{n}(2 i-2)! (i+n-1)!}{(i-1)! (2 i+n-2)!},\\
E_{n,k}^*(i)&=\frac{(-1)^{i+1} (2 i-2)! (i+n-1)! (n-k)! \left(\frac{1}{2} (2
i+k+n-3)\right)!}{(i-1)! (2 i+n-2)! \left(\frac{1}{2} (-k+n-1)\right)!
\left(\frac{1}{2} (k+n-1)\right)! \left(\frac{1}{2} (2
i-k+n-1)\right)!}\\P_{n,k}^*(m)&=\sum_{s=1}^{m-1}\left(\frac{D_
{n}^*(s)E_{n,k }^*(m) } {D_{n}(m)}-E_{n,k}^*(s)\right)\cdot
B_{n,k}^*(s).
\end{align*}
\end{thm}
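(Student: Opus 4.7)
The plan is to mimic the strategy used in the proof of Theorem~\ref{thm:VertLU}, namely to verify the claimed $LU$-decomposition by computing the product $L^{*}\cdot U^{*}$ entry by entry and checking that it agrees with the corresponding entry of $\widehat{F^{*}}$. Since $L^{*}$ is (up to the special $(m+1,m)$-entry) lower triangular with $A_{n}(i,i)=1$ along the diagonal and $U^{*}$ is upper triangular, we have $(L^{*}U^{*})_{i,j}=\sum_{s=1}^{\min(i,j)}l^{*}_{i,s}u^{*}_{s,j}$, so the verification reduces to a collection of hypergeometric sum identities.

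The first observation is that $\hat{f}^{*}_{i,j}=\hat{f}_{i,j}$ for $1\le i\le m$ and $1\le j\le m-1$, and in this range the entries of $L^{*}$ and $U^{*}$ coincide with those of $\widehat{L}$ and $\widehat{U}$ from Theorem~\ref{thm:VertLU}. Consequently the identity
$$\sum_{s=1}^{\min(i,j)}A_{n}(i,s)\cdot C_{n}(s,j)=\binom{2n}{n-i-j+1}+\binom{2n}{n+i-j}$$
is identity (i) already established in the proof of Theorem~\ref{thm:VertLU}, so this entire block requires no further work. What remains is to verify the new entries, which fall into three categories: the $m$-th column ($j=m$, $1\le i\le m$), where we must show $\sum_{s=1}^{i}A_{n}(i,s)\cdot D_{n}^{*}(s)=2^{n}$; the $(m+1)$-th column ($j=m+1$, $1\le i\le m$), where the identity to prove is $\sum_{s=1}^{i}A_{n}(i,s)\cdot E_{n,k}^{*}(s)=\binom{n-k}{(n-k+1)/2-i}$; and the $(m+1)$-th row ($i=m+1$, $1\le j\le m-1$), where we need
$$\sum_{s=1}^{j}B_{n,k}^{*}(s)\cdot C_{n}(s,j)=\frac{2j}{n-k+1}\binom{n-k+1}{(n-k+1)/2-j}-\frac{2(j-1)}{n-k+1}\binom{n-k+1}{(n-k+3)/2-j}.$$
Each of these is of the form $\sum_{s}U(n,k,s)=V(n,k)$, and the natural approach, following the proof of Theorem~\ref{thm:VertLU}, is to apply the Zeilberger--Gosper algorithm to the left-hand side to produce a suitable recurrence, then verify that the right-hand side satisfies the same recurrence, with matching initial values reducing to a routine check.

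Finally, the two remaining entries $l^{*}_{m+1,m}$ and $u^{*}_{m+1,m+1}=P_{n,k}^{*}(m)$ are precisely those forced by the $LU$-factorisation algorithm: once the bottom row of $L^{*}$ has been fixed in its first $m-1$ entries, $l^{*}_{m+1,m}$ is determined by requiring $(L^{*}U^{*})_{m+1,m}$ to equal $\hat{f}^{*}_{m+1,m}$, and $P_{n,k}^{*}(m)$ is then determined by requiring $(L^{*}U^{*})_{m+1,m+1}=\hat{f}^{*}_{m+1,m+1}=0$. Substituting the stated closed forms and simplifying then yields the asserted expressions. The main obstacle is the discovery and verification of the three new hypergeometric identities above, since their structure (involving both the starred sequences $D_{n}^{*},E_{n,k}^{*},B_{n,k}^{*}$ and the half-integer arguments forced by the odd-$b$ parity) does not reduce to any identity used in Theorem~\ref{thm:VertLU}; however, with Zeilberger--Gosper creative telescoping this step is essentially algorithmic, so the genuine difficulty lies in correctly assembling the candidate formulas rather than in proving them once guessed.
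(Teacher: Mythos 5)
Your proposal is correct and follows essentially the same route as the paper: the verification is reduced to exactly the same three identities $\sum_{s=1}^{i}A_{n}(i,s)D_{n}^{*}(s)=2^{n}$, $\sum_{s=1}^{i}A_{n}(i,s)E_{n,k}^{*}(s)=\binom{n-k}{(n-k+1)/2-i}$ and $\sum_{s=1}^{j}B_{n,k}^{*}(s)C_{n}(s,j)=\frac{2j}{n-k+1}\binom{n-k+1}{(n-k+1)/2-j}-\frac{2(j-1)}{n-k+1}\binom{n-k+1}{(n-k+3)/2-j}$, with the $(i,j)$-block for $1\le i\le m$, $1\le j\le m-1$ inherited from Theorem~\ref{thm:VertLU} and the entries $(m+1,m)$, $(m+1,m+1)$ handled directly. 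The only minor divergence is that the paper proves the first of these identities not by a Zeilberger recurrence but by rewriting it as a very-well-poised ${}_{4}F_{3}$ series at argument $-1$ and applying a classical summation formula of Slater, reserving creative telescoping for the other two; either method works.
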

\begin{proof}
 It should be clear that the $(i,j)$-entry of $L^*\cdot U^*$ is $\hat{f}^*$
for
$(i,j)\in\{(m+1,m),(m+1,m+1)\}$ and since
$\hat{f}^*_{i,j}=\hat{f}_{i,j}$ for
$(i,j)\in\{1,\dots,m\}\times\{1,\dots,m-1\}$, proof of this theorem reduces to
showing that
the following identities hold:
\begin{enumerate}[(i)]
\item $\sum_{s=1}^{i}A_{n}(i,s)\cdot D_{n}^*(s)=2^n$ for $i\in\{1,\dots,m\}$;
\item $\sum_{s=1}^{i}A_{n}(i,s)\cdot E_{n,k}^*(s)=\binom{n-k}{(n-k+1)/2-i}$ for
$i\in\{1,\dots,m\}$;
\item $\sum_{s=1}^{j}B_{n,k}^*(s)\cdot
C_{n}(s,j)=\frac{2j}{n-k+1}\binom{n-k+1}{ (n-k+1)/2-j}-\frac{
2(j-1) }{n-k+1}\binom{n-k+1}{(n-k+3)/2-j}$ for $j\in\{1,\dots m-1\}$.
\end{enumerate}

Identity (i) may be written as a hypergeometric series (an
explanation as to how this may be done is postponed until
Section~\ref{sec:Asym})
\begin{multline}
\label{eqn:HypOdVert}\sum_{s=1}^{i}A_{n}(i,s)\cdot
D_n^*(s)=\pFq{4}{3}{n+1,\,
(n+3)/2,\,1-i,\,i}
{(n+1)/2,\,n+1+i,\,n-i+2}{-1}\times\\\frac{ 2^n(n+1)(n!)^2}{(n-i+1)!(n+i)!}.
\end{multline}
By applying the following summation formula
to~\eqref{eqn:HypOdVert} (see~\cite[(2.3.4.6); Appendix (III.10)]{Slat}),
$$\pFq{4}{3}{a,\, a/2+1, \,b,\, c}{a/2,\, a-b+1,\,
a-c+1}{-1}=\frac{\Gamma(a-b+1)
\Gamma(a-c+1)}{\Gamma(a+1) \Gamma(a-b-c+1)},$$
the right hand side of~\eqref{eqn:HypOdVert} may be transformed into
$$\frac{2^n(n+1)(n!)^2}{(n-i+1)!(n+i)!}\cdot\frac{\Gamma(n-i+2)
\Gamma(i+n+1)}{\Gamma(n+1)\Gamma(n+2)}=2^n.$$

Proof of the remaining identities follows in much the same way as
Theorem~\ref{thm:VertLU}.
The left hand side of identity (ii) satisfies the following recurrence
$$(2i+k-n-1)\sum_{s=1}^{i}A_{n}(i,s)\cdot
E_{n,k}^*(s)+(2i-k+n+1)\sum_{s=1}^{i+1}A_{n}(i+1,s)\cdot
E_{n,k}^*(s)=0,$$
whilst the left hand side of identity (iii) satisfies
\begin{multline*}
 (2 j + k - n - 3) (4 j^2 + 4 j + k - n - 1)\sum_{s=1}^{j}B_{n,k}^*(s)\cdot
C_{n}(s,j)+\\(4 j^2 - 4 j + k - n - 1) (2 j - k + n +
3)\sum_{s=1}^{j+1}B_{n,k}^*(s)\cdot
C_{n}(s,j+1)=0.
\end{multline*}
After verifying the right hand side of these identities also satisfy the
respective recurrence relations and checking initial conditions the proof is
complete.
\end{proof}
\begin{proof}[Proof of Theorem~\ref{thm:VertExact} part (ii)]
 Again it is easy to see that $l^*_{i,i}=1$ for $i\in\{1,\dots,m+1\}$ so it
follows that
$$\det(\widehat{F^*})=\left(\prod_{s=1}^{m}C_n(s,s)\right)\cdot D_n^*(m)\cdot
P_{n,k}^*(m).$$
The product above may be re-written as
$$M(\Vreg{n}{2m-1}{k})=P_{n,k}^*(m)\times ST(n,2m-1),$$
thus completing the proof of Theorem~\ref{thm:VertExact}.
\end{proof}

\begin{rmk}
Note that the product in Theorem~\ref{thm:VertExact} is a re-packaging of the
formula for the number of symmetric plane partitions in a
$n\times2m$ box. The appearance of MacMahon's original formula for
vertically symmetric tilings in Theorem~\ref{thm:VertExact} is not
surpising if one considers the translation of such plane partitions
to non-intersecting lattice paths. After a little thought one sees that
these may
be counted by the determinant of the $(m\times m)$ submatrix obtained by taking
rows (resp. columns) $1,\dots,m$ of the matrix $\widehat{F}$ (or indeed
$\widehat{F^*}$) defined above. The determinant of this submatrix is
simply $\prod_{i=1}^{m}\hat{u}_{i,i}=\prod_{i=1}^mu^*_{i,j}=ST(n,2m)$. 
\end{rmk}

What remains is to find an explicit formula for
the determinant of the matrix $G$ from Proposition~\ref{prop:HorzMat}.
The $LU$-decomposition of this matrix follows below.

\begin{thm}\label{thm:HorzLU}
 The matrix $G$ whose determinant counts the number of horizontally symmetric
tilings of $\Hreg{n}{2m}{k}$ has
$LU$-decomposition
$$G=L\cdot U,$$
where $L=(l_{i,j})_{1\le i,j\le m+1}$ is given by
$$l_{i,j}=\begin{cases}
                       A_{n}'(i,j),&1\le j\le i\le m,\\
                       B_{n,k}'(j),&i=m+1, 1\le j\le m,\\
                       1,&i=j=m+1\\
                       0,&otherwise,
                        \end{cases}$$
and $U=(u_{i,j})_{1\le i,j\le m+1}$ is given by

$$u_{i,j}=\begin{cases}
                     C_{n}'(i,j),&1\le i\le j\le m,\\
                     D_{n,k}'(i),&j=m+1,1\le i\le m,\\
-\sum_{s=1}^{m}B_{n,k}'(s)\cdot D_{n,k}'(s),
&i=j=m+1,\\0,&otherwise.
                        \end{cases}$$
                        where
                       
\begin{align*}
 A_{n}'(i,j)&=\frac{n! (i+j-2)! (2 j+n-1)! (2 i-1)}{(2 j-1)! (i-j)! (j-i+n)!
(i+j+n-1)!}, \\
 B_{n,k}'(j)&=\frac{(-1)^{j-1} (j+n-2)! (2 j+n-1)! (n-k)!
\left(j+\frac{k}{2}+\frac{n}{2}-2\right)!}{2 (j-1)! (2 j+2 n-3)!
\left(\frac{n}{2}-\frac{k}{2}\right)! \left(\frac{k}{2}+\frac{n}{2}-1\right)!
\left(j-\frac{k}{2}+\frac{n}{2}\right)!}, \\
C_{n}'(i,j)&=\frac{(2 j-1) n! (i+j-2)! (2 i+2 n-2)!}{(j-i)! (2 i+n-2)! (i-j+n)!
(i+j+n-1)!} , \\
D_{n,k}'(i)&=\frac{(-1)^{i+1} (2 i)! (i+n-1)!
 (n-k)!
\left(i+\frac{k}{2}+\frac{n}{2}-2\right)!}{2 (i!) (2 i+n-2)!
\left(\frac{k}{2}+\frac{n}{2}-1\right)! \frac{n-k}{2}!
\left(i-\frac{k}{2}+\frac{n}{2}\right)!}.
\end{align*}\end{thm}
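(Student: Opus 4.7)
The strategy mirrors that of Theorem~\ref{thm:VertLU} almost verbatim: since an $LU$-decomposition with unit lower-triangular $L$ is unique when it exists, it suffices to verify entry-by-entry that $L \cdot U = G$. First I would check the easy pieces: that $A_n'(i,i) = 1$ (so that $L$ has ones on its diagonal), that $l_{m+1,m+1} \cdot u_{m+1,m+1}$ recovers $g_{m+1,m+1} = 0$ via the stated definition of $u_{m+1,m+1}$, and that the zero entries of $G$ outside the stated support are matched trivially. There is also an implicit symmetry $A_n'(i,s) \cdot C_n'(s,j) = A_n'(j,s) \cdot C_n'(s,i)$ which one should record; together with the triangular shape of $L$ and $U$ this reduces the entries with $1 \le i,j \le m$ to the single case $1 \le i \le j \le m$.

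The proof then reduces to verifying three hypergeometric summation identities, each expressing one of the nontrivial entries of $G$:
\begin{align*}
\sum_{s=1}^{i} A_n'(i,s)\,C_n'(s,j) &= \binom{2n}{n+j-i} - \binom{2n}{n-i-j+1}, \quad 1 \le i \le j \le m, \\
\sum_{s=1}^{i} A_n'(i,s)\,D_{n,k}'(s) &= \binom{n-k}{(n-k)/2+1-i} - \binom{n-k}{(n-k)/2-i}, \quad 1 \le i \le m, \\
\sum_{s=1}^{j} B_{n,k}'(s)\,C_n'(s,j) &= \binom{n-k}{(n-k)/2+1-j} - \binom{n-k}{(n-k)/2-j}, \quad 1 \le j \le m.
\end{align*}

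Each of these is a holonomic identity in a single free parameter ($i$ in the first two, $j$ in the third; the Vandermonde-type convolution in the first has the extra parameter $j$ which one can treat as a spectator). I would feed the summands into an implementation of the Zeilberger--Gosper algorithm (as in~\cite{GspAlg}) to produce a linear recurrence with polynomial coefficients satisfied by each left-hand side. One then checks that the corresponding right-hand side satisfies the same recurrence (a routine computation with binomial coefficients, using standard Pascal-type relations and the identity $\sum_{k=0}^{m}(-1)^k\binom{N}{k}=(-1)^m\binom{N-1}{m}$ already invoked elsewhere in the paper), and finally matches a sufficient number of initial values. This is exactly the template used in the proof of Theorem~\ref{thm:VertLU}, and I expect the recurrences here to be of similar low order.

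The main obstacle is not conceptual but structural: one must guess the correct LU factors before the machinery engages. Here the factors $A_n', B_{n,k}', C_n', D_{n,k}'$ were presumably obtained by symbolically computing the LU-decomposition for small values of $m, n, k$ and extrapolating; once the closed forms are in hand, the verification is mechanical. A secondary technical nuisance is bookkeeping for the parities of $n-k$ (recall Remark~\ref{rmk:Parity} requires $n$ and $k$ to be of the same parity for horizontally symmetric tilings to exist with $b$ even), so each sum should be validated within that combinatorially meaningful range. Once the three identities above are established, the determinant of $G$ reads off as the product of the diagonal entries of $U$,
\[
\det(G) = \left(\prod_{s=1}^{m} C_n'(s,s)\right)\cdot\left(-\sum_{s=1}^{m} B_{n,k}'(s) \cdot D_{n,k}'(s)\right),
\]
and the product of diagonal entries of $U$ supported on $1 \le s \le m$ repackages to $TC(n,2m)$ by Proctor's formula, which immediately yields Theorem~\ref{thm:HorzExact} after taking absolute values as in Proposition~\ref{prop:HorzMat}.
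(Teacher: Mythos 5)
Your proposal is correct and follows essentially the same route as the paper: verify $L\cdot U=G$ entry by entry, reducing to hypergeometric summation identities certified by the Zeilberger--Gosper algorithm plus initial-value checks, with the diagonal of $U$ then repackaging to $TC(n,2m)$ times the extra sum. The only cosmetic difference is that the paper proves just two of your three identities with the recurrence machinery, disposing of the third via the termwise relation $A_{n}'(p,q)\cdot D_{n,k}'(q)=C_{n}'(p,q)\cdot B_{n,k}'(q)$, which lets it restrict attention to entries with $1\le i\le j\le m+1$.
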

\begin{proof}
 It should be immediately obvious that the $(m+1,m+1)$-entry of $(L\cdot U)$ is
0, after a little further thought one sees that $$A_{n}'(i,j)\cdot
C_{n}'(i,j)=A_{n}'(j,i)\cdot C_{n}'(j,i).$$
Straightforward manipulations give $$A_{n}'(p,q)\cdot
D_{n,k}'(q)=C_{n}'(p,q)\cdot B_{n,k}'(q),$$ hence it suffices to consider only
those entries of $L_g\cdot U_g$ for which $1\le i\le j\le m+1$.

The approach here is the same as in the
proof of Theorem~\ref{thm:VertLU},
that is, a set of recurrence relations are given that satisfy each side of the
following identities:
\begin{enumerate}[(i)]
 \item $\sum_{s=1}^{i}A_{n}'(i,s)\cdot C_{n}'(s,j)=\binom{2 n }{  n + j
- i }-\binom{2 n }{  n - j - i + 1}$;
\item $\sum_{s=1}^{i}A_{n}'(i,s)\cdot D_{n,k}'(s)=\binom{ n -  k }{ (n -
k)/2 +1- i}-\binom{ n -  k }{ (n -
k)/2 - i}$.\end{enumerate}
Again by using your favourite implementation of the Zeilberger-Gosper
algorithm it is possible to show that the left hand side of the first
identity above satisfies
the following recurrence
\begin{multline*}
 (j+n-i)(i+j-n-1)\sum_{s=1}^{i}A_{n}'(i,s)\cdot
C_{n}'(s,j)\\-2 (i^2+i-j^2-j-n^2-n)\sum_{s=1}^{i+1}A_{n}'(i+1,s)\cdot
C_{n}'(s,j)\\-(i+j+ n+1) (i-j+ n+2)\sum_{s=1}^{i+2}A_{n}'(i+2,s)\cdot
C_{n}'(s,j)=0,
\end{multline*}
whilst the second identity satisfies
\begin{multline*}
(2i+1)(2i+k-n-2)\sum_{s=1}^{i}A_{n}'(i,s)\cdot
D_{n,k}'(s)\\+(2i-1)(2i+n-k+2)\sum_{s=1}^{i+1}A_{n}'(i+1,s)\cdot
D_{n,k}'(s)=0. 
\end{multline*}
Verifying that the right hand side of (i) and (ii) satisfy the corresponding
recurrence relations and checking intial conditions is simply a routine
computation. This completes the proof.
\end{proof}
\begin{proof}[Proof of Theroem~\ref{thm:HorzExact}]
 Once again the $LU$-decomposition above is unique, it therefore follows that
 $$\det(G)=\left[-\sum_{s=1}^{m}B_{n,k}'(s)\cdot
D_{n,k}'(s)\right]\times\prod_{t=1}^{m}\frac{(n+2t-1)_{n}}{(2t)_{n}}.$$
The product on the right hand side may be re-written as Proctor's
formula~\cite{SymStan}, giving
\begin{align*}M(\HMreg{n}{2m}{k})&=\left[\sum_{s=1}^{m}B_{n,k}'(s)\cdot
D_{n,k}'(s)\right]\times TC(n,2m)\\&=M(\HMreg{n-1}{2m+1}{k}).\end{align*}
\end{proof}
\begin{rmk}Combining Theorem~\ref{thm:HorzExact} with
Theorem~\ref{thm:HorzWExact} by way
of Proposition~\ref{thm:MatchFac} yields Theorem~\ref{thm:HExact}.\end{rmk}

This concludes the proofs of Theorem~\ref{thm:VertExact},
Theorem~\ref{thm:HorzExact} and Theorem~\ref{thm:HExact}. The
asymptotics of the correlation functions defined in Section~\ref{sec:SetUp} may
now be derived using these exact enumeration formulas.

\section{Asymptotic Analysis}\label{sec:Asym}
This final section is devoted to the proofs of
Theorems~\ref{thm:CompCorr},~\ref{thm:VertCor}, and~\ref{thm:HorzCor}. The
exact enumeration formulas from Section~\ref{sec:EnumForm} are inserted into
the corresponding correlation functions defined in Section~\ref{sec:SetUp} and
asymptotic expressions for the different interactions are derived. For the sake
of simplicity the formulas that enumerate tilings of $\Vreg{a}{b}{k}$,
$\HPreg{a}{b}{k}$ and $\HMreg{a}{b}{k}$ for even $b$ are used in the
corresponding correlation functions, though of course the results obtained here
agree with the case for odd $b$.

Recall the correlation function $\omega_{V}(k;\xi)$ defined in
Section~\ref{sec:SetUp},
$$\omega(k;\xi)=\lim_{n\to\infty}\frac{M(\Vreg{n}{2m}{k})}{M(V_{n,2m})},
$$
where $m\thicksim\xi n/2$ and $M(R)$ denotes the number of tilings of the region
$R$. The enumeration formula for $M(\Vreg{n}{2m}{k})$ given in
Section~\ref{sec:EnumForm} lends itself
readily to asymptotic analysis of $\omega_V(k;\xi)$ since it is of the form
$M(\Vreg{n}{2m}{k})=M(V_{n,2m})\cdot\sum_{s=1}^{m} B_{n,k}(s)\cdot D_{n,k}(s)$.
It follows that $$\omega_V(k;\xi)=\lim_{n\to\infty}\left(\sum_{s=1}^{m}
B_{n,k}(s)\cdot
D_{n,k}(s)\right).$$Similar arguments hold for $\omega_{H^-}(k;\xi)$ and
$\omega_{H^+}(k;\xi)$. The product of these two functions then gives
$\omega_H(k;\xi)$ according to the Matchings Factorization
Theorem~\cite{MatchFac}.

In order to extract asymptotic expressions for these correlation functions the
finite sums from Theorems~\ref{thm:VertExact},~\ref{thm:HorzExact}
and~\ref{thm:HorzWExact} are first re-written as limits of
hypergeometric series. By applying some transformation formulas and sending
first $n$ and $m$ to infinity, one obtains these correlation functions in
terms of $k$ (which
parametrises the distance between the holes). Asymptotic expressions for the
different interactions are then derived by letting $k$ become large. Before
embarking on the proofs proper, some standard definitions and necessary
transformation formulas concerning hypergeometric series are recalled below.

Consider the $_pF_q$ hypergeometric series
$$\pFq{p}{q}{a_1,\,a_2,\,\dots,\,a_p}{b_1,\,b_2,\,\dots,\,b_q}{z}
=\sum_{s=0}^{
\infty } \frac{
(a_1)_s(a_2)_s\cdots(a_p)_s}{(b_1)_s(b_2)_s\cdots(b_q)_s}\frac{z^s}{s!}, $$
where $(\alpha)_{\beta}$ denotes the Pochhammer symbol, that is, 
$$(\alpha)_{\beta}=\begin{cases}(\alpha)_{\beta}
=(\alpha)(\alpha+1)\cdots(\alpha+\beta-1),&\beta\neq
0,\\1,&\beta=0.\end{cases}$$ 

A hypergeometric series is considered \emph{well-poised} if
$a_1+1=b_1+a_2=\cdots=b_q+a_p$. A \emph{very well-poised} hypergeometric series
is a well-poised hypergeometric series for which $a_2=a_1/2+1$.

It is often convenient to re-write a finite sum with summands that consist
of hypergeometric terms as a limit hypergeometric series by introducing a term
that
``artificially" terminates the series after a particular point. Consider, for
example, the following
finite
sum,
\begin{equation}\label{eqn:finsum}\sum_{s=0}^{t-1}=\frac{
(a_1)_s(a_2)_s\cdots(a_p)_s }
{(b_1)_s(b_2)_s\cdots(b_q)_s}\frac{z^s}{s!}.\end{equation}
 
By multiplying the summand by $(1-t)_s/(1-t+\epsilon)_s$ it is possible to
re-write this
finite sum as a limit of a hypergeometric series:

\begin{align*}\sum_{s=0}^{t-1}\frac{(a_1)_s(a_2)_s\cdots(a_p)_s}
{(b_1)_s(b_2)_s\cdots(b_q)_s}\frac{z^s}{s!}&=\lim_{\epsilon\to0}\left(\sum_{s=0}
^ { \infty}\frac{ (a_1)_s(a_2)_s\cdots(a_p)_s (1-t)_s}  {
(b_1)_s(b_2)_s\cdots(b_q)_s(1-t+\epsilon)_s}\frac{z^s}{s! } \right)\\&=\lim_ {
\epsilon\to0 } \left(\pFq { p+1 } { q+1 } {  a_1, 
a_2 ,
\dots,a_p,1-t+\epsilon,
}{b_1,b_2,\dots,\,d_q,1-t}{z}\right).\end{align*}

Suppose in addition that the terms $a_1,\dots,a_p$ and $b_1,\dots,b_q$ satisfy
the conditions for a very well-poised hypergeometric series. Then by multiplying
the summand in~\eqref{eqn:finsum} by a factor of
$$\frac{(1-t)_s(a_1+t)_s}{(a_1+t+\epsilon)_s(1-t+\epsilon)_s},$$ the finite
sum~\eqref{eqn:finsum} may be expressed
as a limit of a very well-poised $_{p+2}F_{q+2}$ hypergeometric series
\begin{multline*}\lim_{\epsilon\to0}\left(\sum_{s=0}
^ { \infty } \frac {
(a_1+\epsilon)_s(a_2+\epsilon/2)_s\cdots(a_p+\epsilon/2)_s(1-t)_s(a_1+t)_s } {
(b_1+\epsilon/2)_s(b_2+\epsilon/2)_s\cdots(b_q+\epsilon/2)_s(a_1+t+\epsilon)_s(1
- t +\epsilon) _ s }
\frac { z^s } { s! }\right)\\=\lim_{\epsilon\to0}\left(\pFq { p+2 } {
q+2 } { a_1+\epsilon, \,
\,a_2+\epsilon/2,\,\dots,\,a_p+\epsilon/2,\,1-t,\,a_1+t}{b_1+
\epsilon/2,
\, b_2+\epsilon/2 , \, \dots, \, b_q+\epsilon/2, \, a_1+t+\epsilon, \,
1-t+\epsilon } { z }\right)
.\end{multline*}

There are a wealth of transformation and summation formulas that may be applied
to hypergeometric series. Throughout this section frequent use shall be made of
the following transformation formula for a $_7F_6$ series
(see~\cite[(2.4.1.1), reversed]{Slat})
\begin{multline}\label{eqn:TransForm}\pFq{7}{6}{a,\,a/2+1,\,b,\,c,\,d,
\,e,\,-n}
{ a/2 , \,a-b+1 ,\, a-
c+1,\,a-d+1,\,a-e+1,\,a+n+1}{1}=\\\frac{(a+1)_n(a-d-e+1)_n}{(a-d+1)_n(a-e+1)_n}
\pFq{4}{ 3
}{a-b-c+1,\,d,\,e,\,-n}{a-b+1,\,a-c+1,\,-a+d+e-n}{1}.\end{multline}
Another transformation formula which will be useful is the following
(see~\cite[(1.8.10)]{Slat})
\begin{equation}\label{eqn:Trans2F1}
 \pFq{2}{1}{-n,\,a}{c}{z}=\frac{(1-z)^{n}(a)_n}{(c)_n} 
\pFq{2}{1}{-n,\,c-a}{-a-n+1}{(1-z)^{-1}}.
\end{equation}

\begin{proof}[Proof of Theorem~\ref{thm:VertCor}]
Consider the region $\Vreg{n}{2m}{k}$. According to Section~\ref{sec:SetUp} the
correlation function of this triangular hole with the free boundary to the
right of $\Vreg{n}{2m}{k}$ is
\begin{equation}\label{eqn:VSm}\omega_{V}(k;\xi)=\lim_{n\to\infty}\left(\sum_{
s=1 }^{ m } B_ { n , k } (s)\cdot D_{n,k}(s)\right),
\end{equation}
where $m\thicksim\xi n/2$ and
\begin{align*}
B_{n,k}(s)&=\frac{(-1)^{s+1} (-k+n+1)! (n+s-1)! (n+2s-1)!
\left(\frac{k}{2}+\frac{n}{2}+s-2\right)!}{(s-1)!
\left(\frac{n}{2}-\frac{k}{2}\right)! \left(\frac{k}{2}+\frac{n}{2}-1\right)! (2
n+2 s-1)! \left(-\frac{k}{2}+\frac{n}{2}+s\right)!},\\
 D_{n,k}(s)&=\frac{(-1)^{s+1} (2 s-2)! (n-k)! (n+s-1)!
\left(\frac{k}{2}+\frac{n}{2}+s-2\right)!}{(s-1)!
\left(\frac{n}{2}-\frac{k}{2}\right)! \left(\frac{k}{2}+\frac{n}{2}-1\right)!
(n+2s-2)! \left(-\frac{k}{2}+\frac{n}{2}+s\right)!}\\&\quad+\frac{2 (-1)^{s+1}
(2s-2)! (n-k)! (n+s)! \left(\frac{k}{2}+\frac{n}{2}+s-2\right)!}{(s-2)!
\left(\frac{n}{2}-\frac{k}{2}\right)! \left(\frac{k}{2}+\frac{n}{2}\right)! (n+2
s-2)! \left(-\frac{k}{2}+\frac{n}{2}+s\right)!}.
\end{align*}
The expression on the right of~\eqref{eqn:VSm} may be written as two
separate finite sums consisting of purely hypergeometric terms. These finite
sums may in turn be expressed as a limit of two very well-poised
hypergeometric series that have been artificially
terminated and so~\eqref{eqn:VSm} may be re-written in the following way:
\begin{multline}\label{eqn:Vh1}
 \omega_V(k;\xi)=\lim_{n\to\infty}\lim_{\epsilon\to0}\left(\frac{
n!(n+1)!(n-k)!(n-k+1)!}{
(2n+1)
!((n-k)/2)!^2((n-k)/2+1)!^2}\right.\\\times
\pFq { 7 }
{ 6 } {
n+1+\epsilon,\,\tfrac{n+3+\epsilon}{2},\,\tfrac{k+n+\epsilon}{2},\,
\tfrac{k+n+\epsilon}{2},
\tfrac{1+\epsilon}{2}, n+m+1,1-m } {\tfrac{n+1+\epsilon}{2},
\tfrac{n-k+4+\epsilon}{2},\tfrac{n-k+4+\epsilon}{2},\tfrac{2n+3+\epsilon}{2},
1-m+\epsilon ,n+m+1+\epsilon}{1}\\+
\frac{4(n+1)!(n+3)!((n+k)/2)!(n-k)!(n-k+1)!}{
(2n+3)!((n-k)/2)!^2((n-k)/2+2)!^2((n+k)/2-1)!}
\\\left.\times\pFq { 7 }
{ 6 } {
n+3+\epsilon,\tfrac{n+5+\epsilon}{2},\tfrac{k+n+2+\epsilon}{2},
\tfrac{k+n+2+\epsilon}{2} ,
\tfrac{3+\epsilon}{2}, n+m+2,2-m } {\tfrac{n+3+\epsilon}{2},
\tfrac{n-k+6+\epsilon}{2},
\tfrac{n-k+6+\epsilon}{2},\tfrac{2n+5+\epsilon}{2},
2-m+\epsilon,n+m+2+\epsilon }{1} \right).
\end{multline}
Applying~\eqref{eqn:TransForm} separately to each $_7F_6$ series in the above
sum gives
\begin{equation}\label{eqn:FB}
\frac{(n+2+\epsilon)_{m-1}((1+\epsilon)/2-m)_{
m-1}}{(n+(3+\epsilon)/2)_{ m-1} (1-m+\epsilon)_{ m-1}}
\pFq{ 4}{ 3}{2-k,\tfrac{1+\epsilon}{2},m+n+1,1-m}{\tfrac{
n-k+4+\epsilon}{2} , \tfrac{n-k+4+\epsilon}{2},
\tfrac{3-\epsilon}{2}} { 1 }
\end{equation}
for the upper hypergeometric series in~\eqref{eqn:Vh1} and
\begin{equation}\label{eqn:SB}
\frac{(n+4+\epsilon)_{m-2}((1+\epsilon)/2-m)_{m-2}}{
(n+(5+\epsilon)/2)_{m-2} (2-m+\epsilon)_{m-2} } \pFq{4}{3
}{2-k,
\tfrac{3+\epsilon}{2},m+n+2,2-m}{\tfrac{n-k+6+\epsilon}{2},
\tfrac{n-k+6+\epsilon}{2},\tfrac{5-\epsilon}{2} }{1}
\end{equation}
for the lower. Combining~\eqref{eqn:FB} with the corresponding
pre-factor in~\eqref{eqn:Vh1} and
letting $\epsilon$ tend to zero gives
gives
\begin{multline}\label{eqn:FB2}
\frac{(n-k)!(n-k+1)!(n+m)!(2m-1)!(n+m-1)!}{
((n-k)/2)!^2((n-k)/2+1)!^2(m-1)!^2(2n+2m-1)!}\times\\\pFq{4}{3}{2-k,\,
1
/2,\,m+n+1,\,1-m}{(n-k+4)/2,\,(n-k+4)/2,\,3/2}{1}.
\end{multline}
Applying Stirling's approximation $(n!\thicksim\sqrt{2\pi n}(n/e)^n)$ to the
pre-factor of~\eqref{eqn:FB2} as $n$ tends to infinity (and remembering that
$m\thicksim\xi n/2$) gives
$$\frac{(\xi(\xi+2))^{1/2} 4^{1-k}}{\pi  n},$$
which implies that~\eqref{eqn:FB2} vanishes in the limit. Hence the
correlation function is dominated by
the second sum~\eqref{eqn:SB}, chiefly
\begin{multline}\label{eqn:FB3}\omega_V(k;\xi)=\lim_{n\to\infty}\left(\frac{
2((n+k)/2)!(n-k)!(n-k+1)! } {
((n-k)/2)!^2((n-k)/2+2)!^2((n+k)/2-1)!}\right.\\\times
\frac{(2 m-1)! (m+n+1) (m+n-1)! (m+n)!}{3 (m-2)! (m-1)! 
(2 m+2 n-1)!}\\\left.\times\pFq{4}{3}{2-k,\,
3/2,\,m+n+2,\,2-m}{(n-k+6)/2,\,(n-k+6)/2,\,5/2}{1}\right),
\end{multline}
which follows from letting $\epsilon$ tend to zero in~\eqref{eqn:SB}.

In order to completely derive the asymptotics of~\eqref{eqn:FB3} it must be
shown the limit and sum operations commute in the following expression:
\begin{equation*}\lim_{n\to\infty}\left(\sum_{s=0}^{m}
 \frac{(2-k)_s(3/2)_s(m+n+2)_s(2-m)_s}{
(((n-k+6)/2)_s)^2(5/2)_s(s!) }\right).
\end{equation*}
As this series terminates for $s>k-2$, this is in fact a sum with a finite
upper bound dependent only on $k$ and so the limit and the sum in the above
expression may be safely interchanged. Doing so, and then applying Stirling's
approximation as $n$ tends to infinity, expression~\eqref{eqn:FB3} becomes 
\begin{equation}\label{eqn:SB2}\omega_V(k;\xi)=\frac{(\xi (\xi+2))^{3/2}
4^{1-k}}{3 \pi\cdot e
}\pFq{2}{1}
{ 2-k ,\, 3/2}{5/2}{-\xi(\xi+2)}.\end{equation}

To uncover the asymptotic behaviour of $\omega_V(k;\xi)$ as $k$ becomes very
large it is
first convenient to apply transformation~\eqref{eqn:Trans2F1} to the
hypergeometric series
in~\eqref{eqn:SB2}, thereby obtaining
\begin{equation}\label{eqn:SB3}
\omega_V(k;\xi)=\frac{(\xi (\xi+2))^{3/2} 4^{1-k}}{3 \pi
}\left(\frac{3(\xi+1)^{2k-4}}{2(k-1/2)}\pFq{2}{1}{2-k,\,1}{3/2-k}{
(\xi+1)^{-2}}
\right).
\end{equation}
Consider the hypergeometric series in $\omega_V(k;\xi)$ above, that
is,
$$\sum_{s=0}^{k-2}\frac{(2-k)_s}{(3/2-k)_s(\xi+1)^{2s}}.$$
It is easy to convince oneself that for any positive even $k$,
$\frac{(2-k)_r}{(3/2-k)_r}\le 1$ for all non-negative $s$ and so
\begin{equation}\label{eqn:lim}\lim_{k\to\infty}\sum_{s=0}^{k-2}\frac{(2-k)_s
} { (3/2-k)_s(\xi+1)^ { 2s } }
\le\lim_{k\to\infty}\sum_{s=0}^{k-2}\left(\frac{1}{(\xi+1)^{2}}\right)^s.
\end{equation}
Since $\xi>0$, the limit on the right hand side of~\eqref{eqn:lim} exists and
is
finite so the limit and sum operations on the left side of~\eqref{eqn:lim} may
be interchanged. Hence
as $k$ becomes very large,
$$\omega_{V}(k;\xi)\thicksim\frac{\sqrt{\xi(\xi+2)}}{2\pi
k}\left(\frac{(\xi+1)}{2}\right)^{2(k-1)},$$which concludes the
proof.\end{proof}

\begin{proof}[Proof of Theorem~\ref{thm:HorzCor}]
 Consider the correlation function
 $$\omega_{H^-}(k;\xi)=\lim_{n\to\infty}\left(\sum_{s=1}^{m}B_{n,k}'(s)\cdot
D_{n,k } '(s)\right),$$
where again $m\thicksim\xi n/2$ and
\begin{align*}
 B'_{n,k}(s)&=\frac{(-1)^{s+1} (n-k)! (n+s-2)! (n+2 s-1)! \left(\frac{1}{2}
(k+n+2 s-4)\right)!}{2 (s-1)! \frac{n-k}{2}! \left(\frac{1}{2} (k+n-2)\right)!
(2 n+2 s-3)! \left(-\frac{k}{2}+\frac{n}{2}+s\right)!},\\
D'_{n,k}(s)&=\frac{(-1)^{s+1} (2 s)! (n-k)! (n+s-1)!
\left(\frac{1}{2} (k+n+2 s-4)\right)!}{2( s!) \frac{n-k}{2}!
\left(\frac{1}{2} (k+n-2)\right)! (n+2 s-2)!
\left(-\frac{k}{2}+\frac{n}{2}+s\right)!}.
\end{align*}

The above finite sum may be re-written
as a limit of a hypergeometric series:
\begin{multline*}\label{eqn:Hhyp1}
\omega_{H^-}(k;\xi)=\lim_{n\to\infty}\lim_{\epsilon\to0}\left(\frac{
(n-1)!(n+1)!(n-k)!^2 } {
2(2n-1)!((n-k)/2)!^2((n-k)/2+1)!^2 }
\right.\\\left.\times\pFq{7}{6}{n+1+\epsilon,\tfrac{n+3+\epsilon}{2},
\tfrac{k+n+\epsilon}{2}, \tfrac{k+n+\epsilon}{2},
\tfrac{3+\epsilon}{2},n+m+1
, 1-m } {
\tfrac{n+1+\epsilon}{2} ,
\tfrac{n-k+4+\epsilon}{2} , \tfrac{n-k+4+\epsilon}{2}
,\tfrac{2n+1+\epsilon}{2},1-m+\epsilon,m+n+1+\epsilon}{1}\right).
\end{multline*}

Applying transformation formula~\eqref{eqn:TransForm} to the
hypergeometric series above and letting $\epsilon$ tend to zero gives
\begin{multline*}
\omega_{H^-}(k;\xi)=\lim_{n\to\infty}\left(\frac{(2 m+1)! ((n-k)!)^2 (m+n-2)!
(m+n)!}{12 (m-1)! m! \left(\frac{n-k}{2}!\right)^2 \left(\left(\frac{1}{2}
(-k+n+2)\right)!\right)^2 (2 m+2
n-3)!}\right.\\\left.\times\pFq{4}{3}{2-k,\,3/2,\, m+n+1,\,1-m}{(n-k+4)/2,\,
(n-k+4)/2,\, 5/2 }  {1} \right).
\end{multline*}
Consider the limit of this expression as $n$ tends to infinity. By interchanging
the sum
and limit of the hypergeometric series (as in the previous proof) and applying
Stirling's approximation to the pre-factor it may be shown that

$$\omega_{H^-}(k;\xi)=\frac{(\xi (\xi+2))^{3/2} 4^{1-k}}{3 \pi\cdot e
}\pFq{2}{1}{2-k,\,3/2}{5/2}{-\xi(\xi+2)},$$
which is precisely expression~\eqref{eqn:SB2} above.
\end{proof}

\begin{proof}[Proof of Theorem~\ref{thm:CompCorr}]
By the Matchings Factorization
Theorem~\cite{MatchFac} it suffices to consider the asymptotics of the
expression
$$\omega_{H^+}(k;\xi)=\lim_{n\to\infty}\left(\sum_{s=1}^{m}B_{n,k}(s)\cdot
E_{n,k}(s)\right),$$
where of course $m\thicksim\xi n/2$, $B_{n,k}(s)$ is as before and
$$E_{n,k}(s)=\frac{(-1)^{s+1} (2 s-2)! (-k+n+1)! (n+s-1)!
\left(\frac{k}{2}+\frac{n}{2}+s-2\right)!}{(s-1)!
\left(\frac{n}{2}-\frac{k}{2}\right)! \left(\frac{k}{2}+\frac{n}{2}-1\right)!
(n+2 s-2)! \left(-\frac{k}{2}+\frac{n}{2}+s\right)!}.$$

Again this correlation function may be written in as a limit:
\begin{multline}
\omega_{H^+}(k;\xi)=\lim_{n\to\infty}\lim_{\epsilon\to0}\left(\frac{
n!(n+1)!(n-k+1)!^2 } {
(2n+1)!((n-k)/2)!^2((n-k)/2+1)!^2 }\right.\\\left.\times\pFq{7 }{ 6
} {
n+1+\epsilon,\tfrac{n+3+\epsilon}{2},\tfrac{k+n+\epsilon}{2},
\tfrac{k+n+\epsilon}{2} , \tfrac{1+\epsilon}{2} ,
n+m+1,1-m } {\tfrac{n+1+\epsilon}{2},
\tfrac{n-k+4+\epsilon}{2}
, \tfrac{n-k+4+\epsilon}{2}
,\tfrac{2n+3+\epsilon}{2},1-m+\epsilon,m+n+1+\epsilon } { 1}\right).
\end{multline}
Transforming the above hypergeometric series according to~\eqref{eqn:TransForm}
and letting $\epsilon$ tend to zero gives
\begin{multline}\label{eq:hyp2}
\omega_{H^+}(k;\xi)=\lim_{n\to\infty}\left(\frac{(2 m-1)!  (n-k+1)!^2
(m+n)!(m+n-1)!}{(m-1)!^2 ((n-k)/2)!^2
((n-k)/2+1)!^2 (2 m+2 n-1)!}\right.\\\left.\times\pFq{4}{3}{2-k,\,1
/2,\,m+n+1,\,1-m}{(n-k+4)/2,\,(n-k+4)/2,\,3/2}{1}\right)
\end{multline}
As $n$ tends to infinity Stirling's approximation yields
\begin{equation}\label{eqn:Hphyp}\frac{\sqrt{\xi(\xi+2)} }{4^{k-1}\pi\cdot e
}\end{equation}
for the pre-factor while the hypergeometric series reduces to 
\begin{equation}\label{eq:hypp}\pFq{2}{1}{2-k,\,
1/2}{3/2}{-\xi(2+\xi)}.\end{equation}
This follows from interchanging the limit and the summand
in~\eqref{eq:hyp2}, again by a similar argument to that found in the proof of
Theorem~\ref{thm:VertCor}. Applying transformation~\eqref{eqn:Trans2F1}
to~\eqref{eq:hypp}, the correlation function $\omega_{H^+}(k;\xi)$ becomes
\begin{align*}\omega_{H^+}(k;\xi)&=\frac{(\xi+1)^{2k-4}}{2(k-3/2)}
\pFq { 2 } { 1 } { 2-k ,
\,1}
{5/2-k}{(\xi+1)^{-2}}\\&=\frac{1}{2}(\xi+1)^{2k-4}\sum_{s=0}^{k-2}\frac{
(k-1-s)_s } {
(k- s-3/2)_{s+1}(\xi+1)^{2s}}.\end{align*}
The asymptotic interaction of the holes is given by
\begin{equation}\label{eq:hhh}\lim_{k\to\infty}\omega_{H^+}(k;\xi)=\lim_{
k\to\infty}\left(\frac{1}{ 2}(\xi+1)^{2k-4}\sum_{s=0}^{k-2}\frac{
(k-1-s)_s } {(k- s-3/2)_{s+1}(\xi+1)^{2s}}\right).
\end{equation}

It should be clear that
$$\lim_{k\to\infty}\sum_{s=0}^{k-2}\frac{(k-1-s)_s}{
(k- s-3/2)_{s+1}(\xi+1)^{2s}}
\le\lim_{k\to\infty}\sum_{s=0}^{k-2}\left(\frac{1}{(\xi+1)^{2}}\right)^s,$$
so since the limit on the right exists and is finite the sum and limit
operations in~\eqref{eq:hhh} may be
interchanged.
Combining~\eqref{eq:hhh} with~\eqref{eqn:Hphyp} and letting $k$ become very
large gives
\begin{equation*}\omega_{H^+}(k;\xi)\thicksim\left(\frac{(\xi+1)}{2}\right)^{
2k-2
} \frac { 1
} { 2\pi k\sqrt{\xi(\xi+2)}}.\end{equation*}
Inserting this into Proposition~\ref{thm:MatchFac} along with the
result from Theorem~\ref{thm:HorzCor} yields
$$\omega_{H}(k;\xi)\thicksim\left(\frac { 1 } { 2k\pi }
\left(\frac{\xi+1}{2}\right)^{2k-2}\right)^2.$$
This completes the proof.
\end{proof}

\end{document}